\documentclass[a4]{amsart}
%%%
% SOME STANDARD PACKAGES
%%%
\usepackage{fullpage}
\usepackage{nicefrac}
\usepackage{enumerate}
\usepackage{amssymb,mathrsfs,mathtools,amsthm,amscd}
\usepackage[english]{babel}
\usepackage[utf8]{inputenc}
\usepackage[T1]{fontenc}

\usepackage{comment}
\usepackage[normalem]{ulem}

\usepackage{etex,tikz}\usetikzlibrary{patterns}

\usepackage{xcolor}\usepackage[all]{xy}
%%%
%CMD
%%%
% Letters
\def\R{\mathbb{R}}
\def\C{\mathbb{C}} 
\def\N{\mathbb{N}}

\def\B{\mathbb{B}} % base locus

\def\OO{\mathscr{O}}

\def\ep{\varepsilon}

%Others
\let\leq\leqslant \let\geq\geqslant

 % Base locus
 % Plucker
 % Blowup
 % Grassmannian
 % projection
 % projection
 % codimension
 % matrice
 % Span
\DeclareMathOperator{\Supp}{Supp} % Support
\DeclareMathOperator{\rk}{rank} % rank
 % exceptional locus
 % non finite locus
 %identity
 % Sl
\DeclareMathOperator{\Ric}{Ric} % Ric
 % Sl

 %Null
 % \abs* adjust size

%%%%Nouveaux raccourcis%%%%%%
\def\bZ{\mathbb{Z}}
\def\bQ{\mathbb{Q}}
\def\bR{\mathbb{R}}
\def\bC{\mathbb{C}}
\def\bV{\mathbb{V}}
\def\bF{\mathbb{F}}
\def\bH{\mathbb{H}}

\def\cD{\mathcal{D}}
\def\cE{\mathcal{E}}
\def\cF{\mathcal{F}}
\def\cH{\mathcal{H}}
\def\cL{\mathcal{L}}
\def\cO{\mathcal{O}}
\def\cT{\mathcal{T}}
\def\cV{\mathcal{V}}

\def\Hom{\mathrm{Hom}}
\def\End{\mathrm{End}}
\newcommand{\sbt}{\,\begin{picture}(-1,1)(-1,-3)\circle*{3}\end{picture}\ }
\def\rank{\mathrm{rank}}
\def\tr{\mathrm{tr}}

%for Damian

\def\displaymap#1#2{%
  \ifx\relax#1\relax
  \left.\vcenter{\xymatrix@=0pc{#2}}\right.
  \else
  #1\colon\left\vert\vcenter{\xymatrix@=0pc{#2}}\right.
  \fi
}

\newtheorem{theorem}{Theorem}[section]
\newtheorem{lemma}[theorem]{Lemma}
\newtheorem{proposition}[theorem]{Proposition}
\newtheorem{proposition?}[theorem]{??Proposition??}
\newtheorem{corollary}[theorem]{Corollary}

\newtheorem{example}{\it Example\/}

\theoremstyle{definition}
\newtheorem{definition}[theorem]{Definition}
\newtheorem{remark}[theorem]{\it Remark\/}

%\newtheorem{claim}{\it Claim\/}

%

%%%
%META
%%%
\author[D. Brotbek]{Damian Brotbek}
\address{\noindent D. Brotbek: Institut Élie Cartan, Université de Lorraine, Vand\oe{}uvre les Nancy, France}
\email{damian.brotbek@univ-lorraine.fr}

\author[Y. Brunebarbe]{Yohan Brunebarbe}
\address{\noindent Y. Brunebarbe:  IMB-CNRS, Université de Bordeaux, Talence, France.}
\email{yohan.brunebarbe@math.u-bordeaux.fr}

\title{Arakelov-Nevanlinna inequalities for variations of Hodge structures and applications}
\setcounter{tocdepth}{2}% pour la table des matières du pdf

\begin{document}

\begin{abstract}
We prove a Second Main Theorem type inequality for any log-smooth projective pair \((X,D)\) such that \(X\setminus D\) supports a complex polarized variation of Hodge structures. This can be viewed as a Nevanlinna theoretic analogue of the Arakelov inequalities for variations of Hodge structures due to Deligne, Peters and Jost-Zuo. As an application, we obtain in this context a criterion of hyperbolicity that we use to derive a vast generalization of a well-known hyperbolicity result of Nadel.

The first ingredient of our proof is a Second Main Theorem type inequality for any log-smooth projective pair \((X,D)\) such that \(X\setminus D\) supports a metric whose holomorphic sectional curvature is bounded from above by a negative constant. The second ingredient of our proof is an explicit bound on the holomorphic sectional curvature of the Griffiths-Schmid metric constructed from a variation of Hodge structures.

As a byproduct of our approach, we also establish a Second Main Theorem type inequality for pairs \((X,D)\) such that \(X\setminus D\) is hyperbolically embedded in \(X\).

\end{abstract}
\maketitle

%%%%%%%%%%%%%%%%%%%%%%%%%%%%%%%%%%%%%%%%%%%%%%%%%%%%%%

\section{Introduction}

Consider a smooth projective complex algebraic variety $X$ and a variation of Hodge structures $\mathbb{V} = (\cL, \cF^{\sbt}, h)$ defined on the complementary of a divisor $D \subset X$, see Section \ref{Recollection in Hodge theory} for a reminder in Hodge theory. Given a non-compact Riemann surface $B$ and a non-constant holomorphic map $f: B \rightarrow X$ such that \(f(B) \not\subset D\), we would like to investigate how much the position of the image $f(B)$ in $X$ is constrained due to the existence of $\bV$. To address this problem we establish a close analog of what is traditionally called a \textit{Second Main Theorem} in Nevanlinna theory. As a corollary, in case $B$ is algebraic, we obtain a criterion implying that the map $f$ is algebraic.\\

In this paper, we will always assume that the Riemann surface $B$ is parabolic in the function-theoretic sense, meaning that any bounded subharmonic function defined on $B$ is constant. For example, any algebraic Riemann surface satisfies this property. It is known that a \emph{non-compact} Riemann surface $B$ is parabolic if and only if it admits a parabolic exhaustion function, i.e. a continuous proper function $\sigma : B \rightarrow [0, + \infty)$ such that $ \log \sigma$ is harmonic outside a compact subset of $B$ (see for example \cite[Theorem 10.12]{Sto77}).  For instance \(\C\) is parabolic, and in this case one can take \(\sigma(z)=|z|\). More generally, for any affine  algebraic Riemann surface $B$, any proper finite holomorphic map $\pi : B \rightarrow \bC$ yields the parabolic exhaustion function $z \mapsto |\pi(z)|$. Parabolic Riemann surfaces provide a suitable framework for value distribution theory (see for instance \cite{SNM66}, \cite{Wu70}, \cite{Griffiths-King}, \cite{Sto77}, \cite{Sto83} or \cite{P-S14}). We briefly recall the main notations here and refer to Section \ref{ssec:Nevanlinna} for more details.\\

Let $B$ be a non-compact parabolic Riemann surface equipped with a parabolic exhaustion function $\sigma$. One defines the \emph{weighted Euler characteristic} of the pair \((B,\sigma)\) to be the function
\[\mathfrak{X}_{B,\sigma}(r):=\int_1^r \chi(B(t)) \frac{dt}{t},\]
where \(B(r):=\{p\in B\ ;\ \sigma(p)<r\} \) and \(\chi(B(t))\) is the Euler characteristic of \(B(t)\). If \(\alpha\) is a current of type \((1,1)\) on \(B\), one defines its \emph{Nevanlinna characteristic function} by
\[T_\alpha(r)=\int_1^r\left(\int_{B(t)}\alpha\right)\frac{dt}{t}.\]
If \( X\) is a smooth  proper variety, \(L\) a line bundle on \(X\), \(D\) a divisor on \( X\) and \(f:B\to X\) a non-constant holomorphic map such that \(f(B)\not\subset D\), then one defines the \emph{characteristic function of \(f\) with respect to \(L\)} and the \emph{counting function of \(f\) with respect to \(D\)} to be respectively  the functions
\[T_{f,L}(r):=T_{f^*C_1(L,h)}(r)\quad \text{and}\quad N^{[1]}_{f,D}(r):=T_{[f^{-1}(D)]}(r).\]
Here \([f^{-1}(D)]\) denotes the integration current with respect to the discrete subset \(f^{-1}(D) \subset B\) and $C_1(L,h)$ is the first Chern form of $L$ with respect to a \(\mathscr{C}^{\infty}\) metric  \(h\) on \(L\) (the notation is somewhat abusive since the function \(T_{f,L}(r)\) depends on the choice of the metric $h$, but only up to a bounded function, see Section \ref{ssec:Nevanlinna}). The counting function describes the asymptotic behaviour of the (possibly infinite) intersection of \(f(B)\) and \(D\) without counting multiplicities. 

%%%%%%%%%%%%%%%%%
\subsection{An Arakelov-Nevanlinna inequality}
Let $X$ be a smooth projective complex algebraic variety and $\mathbb{V} = (\cL, \cF^{\sbt}, h)$ be a variation of complex polarized Hodge structures of length $w$ defined on the complementary of a normal crossing divisor $D \subset X$. Assume that $\cL$ has unipotent monodromies around the irreducible components of $D$. We denote by $\bar \cF^p$ the canonical Deligne-Schmid extension of $\cF^p$ to $X$ for any integer $p$ and by $\bar L_{\mathbb{V}} = \otimes_p \det \bar \cF^p$ the canonical extension of the Griffiths line bundle of $\mathbb{V}$.

The following theorem is the main result in this paper (see also Theorem \ref{thm:Arakelov-Nevanlinna inequality} for another Arakelov-Nevanlinna inequality).
\begin{theorem}\label{thm:SMT for VHS}
Let $X, D, \bV$ and $\bar L_{\mathbb{V}}$ as above. Let $B$ be a non-compact parabolic Riemann surface equipped with a parabolic exhaustion function $\sigma$ and a non-constant holomorphic map $f: B \rightarrow X$ such that \(f(B) \not\subset D\). Then, for any ample line bundle $A$ on $X$, there exists $C >0$ such that the inequality
 \[   T_{f, \bar L_{\mathbb{V}} }(r) \leqslant \frac{w^2 \cdot \rk \cL}{4} \cdot \left( - \mathfrak{X}_{B,\sigma}(r) + N^{[1]}_{f, D}(r) \right) + C \cdot \left(  \log r + \log T_{f, A}(r) \right)    \ \  \]
holds for all $r \in \bR_{\geq 1}$ outside a Borel subset of finite Lebesgue measure.
\end{theorem}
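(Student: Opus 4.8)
The plan is to reduce the statement to the two "ingredients" advertised in the abstract: a Second Main Theorem for pairs $(X,D)$ carrying a metric of holomorphic sectional curvature bounded above by a negative constant on $X\setminus D$, and an explicit curvature estimate for the Griffiths–Schmid metric. Concretely, the period map associated to $\bV$ gives a horizontal holomorphic map from $X\setminus D$ to a period domain, and pulling back the Griffiths–Schmid metric produces a (possibly degenerate, but generically nondegenerate since $f(B)\not\subset D$ and $\bV$ is nonconstant along $f$) Hermitian pseudmetric on $X\setminus D$ whose holomorphic sectional curvature along horizontal directions is bounded above by an explicit negative constant depending on $w$ and $\rk\cL$. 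The Griffiths curvature computation — this is the classical Griffiths–Schmid negativity of the curvature of the Hodge metric in horizontal directions, made quantitative — will be recalled or cited from the relevant section; the key point is that the sharp constant $w^2\rk\cL/4$ in the final inequality comes precisely from the sharp bound on the holomorphic sectional curvature together with the logarithmic comparison between the pulled-back Hodge metric and a fixed metric on $\bar L_{\bV}$.

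Next I would set up the Ahlfors–Schwarz / logarithmic derivative machinery over a parabolic Riemann surface. Given the parabolic exhaustion $\sigma$, one has the Nevanlinna–Stoll calculus: for a nonnegative $(1,1)$-current $\alpha$ on $B$ with a suitable $\log$ superharmonicity property one controls $\int_{B(r)}\log(\cdot)$ against $T_\alpha(r)$ up to an error term of the shape $O(\log r + \log T_{f,A}(r))$ valid outside a set of finite measure — this is the standard "First Main Theorem + calculus lemma" package in the parabolic setting (Stoll, Griffiths–King, Păun–Sibony). Applying this to the metric $f^*(\text{Hodge metric})$ on $T_B$ and invoking the curvature bound, one obtains an Ahlfors–Schwarz-type inequality bounding the characteristic function $T_{f,\bar L_{\bV}}(r)$ — more precisely the part of it coming from the curvature form of the Hodge metric on the Griffiths line bundle, which by Griffiths is (up to the canonical extension across $D$) comparable to the pulled-back holomorphic sectional curvature form — by $-\mathfrak{X}_{B,\sigma}(r)+N^{[1]}_{f,D}(r)$ times the curvature constant, plus the admissible error. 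The counting term $N^{[1]}_{f,D}(r)$ enters because the Hodge metric on $\bar L_{\bV}$ degenerates (has at worst logarithmic singularities) along $D$, so the comparison between $f^*C_1(\bar L_{\bV},h_\infty)$ for a smooth metric $h_\infty$ and the curvature of the singular Hodge metric costs a boundary counting term — this is where the Deligne–Schmid extension and the "norm estimates" for the Hodge metric near $D$ are used.

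Then I would assemble the pieces: write $T_{f,\bar L_{\bV}}(r)$ for the smooth metric, compare it with the singular Hodge metric on $\bar L_{\bV}$ (gaining the $N^{[1]}_{f,D}(r)$ term and a bounded error, using that $\log r + \log T_{f,A}(r)$ dominates such errors after applying the calculus lemma), then bound the curvature characteristic of the singular Hodge metric using the Ahlfors–Schwarz inequality and the sharp curvature constant $-4/(w^2\rk\cL)$, which produces the factor $w^2\rk\cL/4$ in front of $-\mathfrak{X}_{B,\sigma}(r)+N^{[1]}_{f,D}(r)$. One must be slightly careful that $f^*(\text{Hodge metric on }T_B)$ may vanish where the period map is critical along $f$; there the standard trick is to work on the open set where it is nonzero and absorb the rest, or to perturb by a fixed metric with a small coefficient and let it tend to zero — the usual degeneration argument in Nevanlinna theory.

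The main obstacle I expect is the interaction between the logarithmic singularities of the Griffiths–Schmid metric along $D$ and the parabolic Nevanlinna calculus: making the curvature inequality valid as a global statement on $X\setminus D$ (not just locally away from $D$), controlling the error terms from the Deligne–Schmid extension uniformly, and ensuring that the boundary contribution is exactly a counting function $N^{[1]}_{f,D}(r)$ with coefficient matching the curvature constant rather than something larger. In other words, the heart of the argument is the quantitative version of "the Hodge metric on the Griffiths line bundle, canonically extended, has curvature current bounded below by (a negative constant times) the horizontal Hodge form minus the current of integration along $D$", combined with the parabolic Ahlfors–Schwarz lemma with its characteristic error term $O(\log r+\log T_{f,A}(r))$ outside a small set — the first of these being genuinely the new technical input, the second being an adaptation of known machinery.
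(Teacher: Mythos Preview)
Your overall architecture is the paper's: pull back the Griffiths--Schmid metric, use the explicit bound $-4/(w^2\rk\cL)$ on its holomorphic sectional curvature, feed this into an Ahlfors--Schwarz/logarithmic-derivative inequality over the parabolic Riemann surface $B$, and finally compare the singular Hodge metric on $\bar L_{\bV}$ with a smooth one. The handling of the degeneracy locus of the period map is also as you suggest (it is a proper analytic subset, and if $f(B)$ lands in it the left-hand side is already $O(\log T_{f,A})$).

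There is, however, a genuine mix-up in your bookkeeping of where $N^{[1]}_{f,D}(r)$ comes from, and it matters. Under the unipotent monodromy hypothesis, the Hodge metric on $\bar L_{\bV}$ has only \emph{logarithmic growth} near $D$ (it is bounded above and below by powers of $-\log\|s_D\|$), not logarithmic \emph{poles}; consequently the comparison between $T_{f,\bar L_{\bV}}$ for a smooth metric and for the singular Hodge metric costs only $O(\log T_{f,A}(r))$, via Jensen's formula, and produces \emph{no} counting term. The term $N^{[1]}_{f,D}(r)$ arises instead inside the Ahlfors--Schwarz step: the pulled-back form $\omega=f^*\omega_{\mathrm{GS}}$ on $B\setminus\Sigma$ (with $\Sigma=f^{-1}(D)_{\mathrm{red}}$) has at most Poincar\'e-type singularities at $\Sigma$ by the Schwarz lemma, and when one passes from the Ricci \emph{form} $-\Ric\omega$ to the Ricci \emph{current} $-\Ric[\omega]$ across $\Sigma$ one picks up exactly $2\pi[\Sigma]$; it is this, combined with $\gamma\,\omega\leq -\Ric\omega$, that yields
\[
T_{[\omega]}(r)\leq \frac{1}{\gamma}\,T_{[-\Ric\omega]}(r)\leq \frac{2\pi}{\gamma}\bigl(N_\Sigma(r)-\mathfrak{X}_\sigma(r)\bigr)+O(\log r+\log T_{[\omega]}(r)).
\]
Your phrasing ``curvature current bounded below by \dots\ minus the current of integration along $D$'' and your claim that the line-bundle comparison ``costs a boundary counting term'' conflate the singularities of the \emph{tangent-bundle} metric (which are genuine and give the counting function) with those of the \emph{line-bundle} metric (which are mild and give only a log error). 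If you implement the argument as written you will either be unable to extract $N^{[1]}_{f,D}$ from the line-bundle step, or you will double-count it and lose the constant $w^2\rk\cL/4$.
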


\begin{remark} In the situation of Theorem \ref{thm:SMT for VHS}, assume moreover that $B$ is affine algebraic and that the holomorphic map $f: B \rightarrow X$ is algebraic, or equivalently that $B$ is the complementary of finitely many (but at least one) points in a compact Riemann surface $\bar B$ and that $f$ extends to a holomorphic map \(\bar f: \bar B \to X\). Choose a finite algebraic map $\pi : B \rightarrow \bC$ and equip $B$ with the parabolic exhaustion function $\sigma : z \mapsto |\pi(z)| $. 
Then one easily verifies that the three functions $ T_{f, \,\bar L_{\mathbb{V}} }, \mathfrak{X}_{B,\sigma} $ and $N^{[1]}_{f, D}$ are equivalent to $\deg_{\bar B}(\bar f ^\ast \bar \cF^p) \cdot \log$,  $ \chi(B) \cdot \log$ and $ \deg((f^\ast(D))_{red}) \cdot \log$ respectively. Therefore the preceding theorem is implied in this case by the following Arakelov inequality \cite{peters2000arakelovtype, Jost-Zuo, Bruni-level}:
\[ \deg_{\bar B}(\bar f ^\ast \bar L_{\mathbb{V}} ) \leqslant \frac{w^2 \cdot \rk \cL}{4} \cdot \big( -\chi(B) + \deg((f^\ast(D))_{red}) \big) . \]
\end{remark}

\begin{remark}
Arguing as in \cite{Bruni-level}, one can prove that Theorem \ref{thm:SMT for VHS} holds more generally when the monodromy of $\cL$ at infinity is quasi-unipotent, with $ \bar L_{\mathbb{V}}$ denoting the Griffiths parabolic line bundle of the variation $\bV$, cf. \textit{loc. cit.}.
\end{remark}

\begin{remark}
One of our motivations is the following conjecture of Griffiths (see \cite[Conjecture 4.10.5]{N-W14} for an even more optimistic version): \emph{If \(X\) is a smooth projective variety, \(D\) a simple normal crossing divisor on \(X\) and \(A\) an ample line bundle on \(X\), then there exists a constant $\alpha >0$ such that for any Zariski dense entire curve \(f:\C\to X\) and every \(\ep>0\), the inequality}
\begin{equation}
T_{f,K_{X}(D)}(r)\leqslant \alpha \cdot N_{f,D}(r)+\ep \cdot  T_{f,A}(r).\label{eq:Griffiths}
\end{equation}
\emph{holds for all $r \in \bR_{\geq 1}$ outside a Borel subset of finite Lebesgue measure (the characteristic functions are defined with respect to the exhaustion function $z \mapsto |z|$).}

This conjecture has several far reaching consequences, for instance, it implies that if \(K_{X}(D)\) is big (i.e. the pair \((X,D)\) is of log-general type) then every entire curve in \(X\setminus D\) is algebraically degenerate. This is a version of the logarithmic Green-Griffiths-Lang conjecture.

It is therefore natural to try to establish the inequality \eqref{eq:Griffiths} or other similar inequalities, which are referred to as \emph{Second Main Theorems} in Nevanlinna Theory, when one already knows that the pair \((X,D)\) satisfies the Green-Griffiths-Lang conjecture. This is for instance the case when \(X\setminus D\) supports a complex polarized variation of Hodge structures whose period map is immersive at one point, cf. \cite{Brunebarbe-Cadorel}.

That conjecture of Griffiths has the following algebraic analogue: \emph{If \(X\) is a projective variety and \(D\) is a simple normal crossing divisor on \(X\) such that the pair \((X,D)\) is of general type, then there exists constants \(\alpha,\beta\in \R_+\) such that for any compact Riemann surface \(C\) and every non-constant algebraic map \(f:C\to X\) such that \(f(C)\not\subset D\), one has}
\begin{equation}
\deg_Cf^*K_{X}(D)\leqslant \alpha \cdot \left(\deg_C \left(f^*D \right)_{\rm red}+2g\left(C \right)-2 \right)+\beta.\label{eq:GriffithsAlg}
\end{equation}
Both Griffiths' conjecture and its algebraic counterpart are still widely open. For a more precise discussion on the analogies between  value distribution theory for parabolic Riemann surfaces and its algebraic counterpart, we refer to \cite{Gas09}.

From this point of view, Theorem \ref{thm:SMT for VHS} can be viewed as a \textit{Second Main Theorem} for varieties supporting a variation of Hodge structures, replacing the log-canonical bundle of the ambiant variety by the canonical bundle (in the sense of Griffiths) of the variation.
\end{remark}

%%%%%%%%%%%%%%%%%%%%%
\subsection{Examples}

\begin{example}
Endowing $\bC^2$ with a non-degenerate hermitian sesquilinear form $h$ of signature $(1,1)$, and identifying the upper half plane $\bH$ with the space of positive lines in $\bC^2$, we can view $\bH$ as the parameter space of complex Hodge  structures on $\bC^2$ with Hodge numbers $(1,1)$ and polarized by $h$, cf. Section \ref{Recollection in Hodge theory}. This induces a variation of complex polarized Hodge structures  on any quotient of $\bH$ by a torsion-free discrete subgroup of $SU(h) \simeq SU(1,1) \simeq SL(2, \bR)$.
This applies in particular to any smooth complex algebraic curve $X$ with $\chi(X)  <0$ (so that $X$ is the quotient of $\bH$ by a Fuchsian group)
and Theorem \ref{thm:SMT for VHS} specialized to that case recovers the Second Main Theorem from Nevanlinna Theory in one variable. 

\end{example}

\begin{example} 
The preceding example generalizes as follows. Let $\cD$ be a bounded symmetric domain. Gross \cite{Gross94} (in the tube domain case) and Sheng-Zuo \cite{Sheng-Zuo10} (in general) have constructed an $\mathrm{Aut}(\cD)$-equivariant variation of complex polarized Hodge structures  on $\cD$ of length $\rk (\cD)$ and whose smallest Hodge bundle can be identified with the automorphic line bundle of $\cD$. Moreover, both the canonical bundle of $\cD$ and the canonical bundle of $\bV$ are a positive tensor power of the automorphic line bundle of $\cD$. If $\Gamma$ is a net lattice in the group of holomorphic automorphisms of $\cD$, then the quotient $U := \Gamma \backslash \cD$ is (uniquely) a smooth complex algebraic variety and the variation $\bV$ descends to $U$. For any smooth compactification $X$ of $U$ such that $D = X - U$ is a normal crossing divisor, the canonical extension to $X$ of the Griffiths line bundle of $\mathbb{V}$ is proportional to $K_{X}(D)$.
Therefore, if we specialize Theorem \ref{thm:SMT for VHS} in that case, we recover the Second Main Theorem for compactified locally symmetric spaces due to Nadel \cite{Nad89} and to Aihara-Noguchi \cite{A-N91}.
\end{example}

\begin{example}
Theorem \ref{thm:SMT for VHS} can be applied to any smooth complex algebraic variety $X$ which is a fine moduli space for a class of varieties that satisfy an infinitesimal Torelli theorem, e.g. smooth projective curves or smooth projective varieties with a trivial canonical bundle (with a suitable level structure).
\end{example}

%%%%%%%%%%%%%%%%%%%%%%%%%%%%%%%%%%%%%%%%%
\subsection{Two Applications}

The following applications refine some recent results of Deng \cite{deng2020big}. However, our approach differs from his: on the one hand, while Deng uses an ad hoc Finsler metric, we rely on the Griffiths-Schmid metric, and on the other hand, while Deng relies on the big Picard theorem from \cite{deng2019picard}, we use our Theorem \ref{thm:SMT for VHS}.

\subsubsection{A criterion of hyperbolicity}
Following \cite{Ariyan-Robert} we say that a complex algebraic variety $Y$ is Borel hyperbolic modulo a closed subvariety $Z \subset Y$ if, for every (reduced) complex algebraic variety $S$, any holomorphic map $f : S^{an} \rightarrow Y^{an}$ such that $f (S^{an}) \not \subset Z^{an}$ is algebraic. Observe that this implies that $Y$ is Brody hyperbolic modulo $Z$, i.e. that every non-constant holomorphic map $f : \bC \rightarrow Y^{an}$ satisfies $f(\bC) \subset Z^{an}$, since otherwise both $f$ and $f \circ \exp$ would be algebraic.

\begin{theorem} \label{thm:criterion for Borel hyperbolicity}
Let $X$ be a smooth projective complex algebraic variety and let $\mathbb{V} = (\cL, \cF^{\sbt}, h)$ be a variation of complex polarized Hodge structures of length $w$ defined on the complementary of a normal crossing divisor $D \subset X$. Assume that $\cL$ has unipotent monodromies around $D$ and let $\bar L_{\mathbb{V}}$ be the canonical extension of the Griffiths line bundle of $\mathbb{V}$. If one denotes by $\B_{+}$ the augmented base locus of the $\bQ$-line bundle $ \bar L_{\mathbb{V}} (- (  \frac{w^2}{4} \cdot \rk \cL)  \cdot D)$, then $X$ is Borel hyperbolic modulo $D \cup \B_{+}$.
\end{theorem}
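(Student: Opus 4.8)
The plan is to combine Theorem~\ref{thm:SMT for VHS} with standard properties of the augmented base locus. First I would invoke the usual reduction in the theory of Borel hyperbolicity (cf. \cite{Ariyan-Robert}, and \cite{deng2020big} for the version ``modulo a closed subvariety''): to prove that $X$ is Borel hyperbolic modulo $D\cup\B_+$ it suffices to show that, for every smooth affine curve $B$ with smooth projective compactification $\bar B$ and $\Sigma:=\bar B\setminus B\neq\emptyset$, every non-constant holomorphic map $f:B\to X$ with $f(B)\not\subset D\cup\B_+$ extends to a morphism $\bar f:\bar B\to X$ (and is therefore algebraic, the compact and constant cases being trivial). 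I would fix a finite morphism $\pi:B\to\bC$ and equip $B$ with the parabolic exhaustion $\sigma:=\abs{\pi}$. Since $\pi$ is proper, $\sigma\to+\infty$ exactly along $\Sigma$, so for $r$ large $B(r)$ is $\bar B$ with a small disc removed around each point of $\Sigma$; hence $\chi(B(r))$ is eventually constant, equal to $\chi(B)$, and $\mathfrak{X}_{B,\sigma}(r)=\chi(B)\cdot\log r+O(1)$.

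Next I would apply Theorem~\ref{thm:SMT for VHS} (legitimately, since $f(B)\not\subset D$) and reinterpret it for the $\bQ$-line bundle $L':=\bar L_{\mathbb{V}}-\tfrac{w^2}{4}\rk\cL\cdot D$, whose augmented base locus is $\B_+$ by hypothesis. Using additivity of the characteristic function up to $O(1)$ together with the First Main Theorem applied to the tautological section of $\cO(D)$ (normalised so its norm is $\leq 1$, giving $T_{f,\cO(D)}(r)\geq N_{f,D}(r)-O(1)\geq N^{[1]}_{f,D}(r)-O(1)$), the inequality of Theorem~\ref{thm:SMT for VHS} rearranges to
\[ T_{f,L'}(r)\leq -\tfrac{w^2}{4}\rk\cL\cdot\mathfrak{X}_{B,\sigma}(r)+C\bigl(\log r+\log T_{f,A}(r)\bigr)+O(1). \]
Since $\mathfrak{X}_{B,\sigma}(r)=\chi(B)\log r+O(1)$, the first term on the right is $O(\log r)$, so I would obtain $T_{f,L'}(r)\leq C'\bigl(\log r+\log T_{f,A}(r)\bigr)$ for some $C'>0$ and all $r\geq 1$ outside a set of finite Lebesgue measure.

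The key step is then to upgrade this to a bound on $T_{f,A}$ itself by using the hypothesis $f(B)\not\subset\B_+$. Picking $b_0\in B$ with $x_0:=f(b_0)\notin D\cup\B_+$ and using the characterization $\B_+=\bigcap_{A'\text{ ample}}\B(L'-A')$, I would produce an ample $\bQ$-divisor $A'$ and an effective $\bQ$-divisor $E$ with $L'\sim_{\bQ}A'+E$ and $x_0\notin\Supp E$, whence $f(B)\not\subset\Supp E$; the First Main Theorem then gives $T_{f,\cO(E)}(r)\geq -O(1)$, so $T_{f,A'}(r)\leq T_{f,L'}(r)+O(1)$. Since $A$ and $A'$ are both ample, their characteristic functions are mutually bounded by constant multiples (up to $O(1)$), so $T_{f,A}(r)\leq C''\bigl(\log r+\log T_{f,A}(r)\bigr)$ outside a set of finite measure. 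As $T_{f,A}$ is nondecreasing and $\log T_{f,A}(r)$ is negligible compared to $T_{f,A}(r)$, a routine argument — replacing a bad value of $r$ by a slightly larger good one, using finiteness of the exceptional set — yields $T_{f,A}(r)=O(\log r)$ for all $r$. Taking $A=\cO(1)$ for a projective embedding $X\hookrightarrow\P^N$, this forces $\int_{B(r)}f^*\omega_{\mathrm{FS}}$ to remain bounded, so $f(B)$ has finite area in $\P^N$; by Bishop's extension theorem the closure of the graph of $f$ in $\bar B\times X$ is an analytic, hence algebraic, curve, and therefore $f$ extends to $\bar f:\bar B\to X$ and is algebraic.

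The main obstacle is exactly this upgrading step: Theorem~\ref{thm:SMT for VHS} only controls $T_{f,L'}$ by the \emph{logarithm} of $T_{f,A}$, which is far too weak on its own, and the class $L'$ itself need not be positive in any sense; it is only the assumption $f(B)\not\subset\B_+$ that allows one to trade $L'$ for a genuinely ample class and close the loop, after which the bootstrap to $T_{f,A}(r)=O(\log r)$ becomes formal. Care is also needed to check that the finite-measure exceptional set and the $O(\log r)$ error terms survive the bootstrap, and that the reduction to affine curves faithfully preserves the condition that the image avoids $D\cup\B_+$ (rather than $D$ and $\B_+$ separately). The small-$r$ behaviour of $\mathfrak{X}_{B,\sigma}$ and the metric normalisations entering the First Main Theorem also have to be tracked, but these are routine.
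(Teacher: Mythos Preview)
Your proof is correct and follows essentially the same route as the paper: reduce via \cite{Ariyan-Robert} to affine curves, use the augmented base locus hypothesis to write $L'\sim_{\bQ}A'+E$ with $f(B)\not\subset\Supp E$, combine the First Main Theorem with Theorem~\ref{thm:SMT for VHS} to obtain $T_{f,A'}(r)\leq_{\rm exc}O(\log r+\log T_{f,A}(r))$, and bootstrap to $T_{f,A}(r)=O(\log r)$. The only cosmetic differences are that the paper removes the exceptional set by observing that $r\mapsto T_{f,A'}(r)/\log r$ is monotone (convexity of $T_{f,A'}$ in $\log r$) rather than by your ``replace $r$ by a nearby good value'' argument, and concludes by citing the Griffiths--King criterion \cite[Proposition~5.9]{Griffiths-King} rather than spelling out the Bishop extension; these are the same ideas.
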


\begin{remark}
Recall that for any $\bQ$-line bundle $L$ on a projective complex variety $Y$, its augmented base locus $\B_{+}(L)$ is the intersection over all ample $\bQ$-divisor $A$ on $Y$
of the stable loci  $\B(L(-A)) := \cap_{n \geq1} \mathrm{Bs}(L(-A)^{\otimes n})$.
\end{remark}

\subsubsection{A generalization of a theorem of Nadel}
Let $X$ be a (non necessarily smooth nor proper) complex algebraic variety equipped with a $\bZ$-local system $\cL$. For every prime number $p$ we have an induced $\bF_p$-local system $\cL \otimes_{\bZ} \bF_p$ and we denote by $X(p)$ the finite \'{e}tale cover of $X$ that trivializes the local system of sets $\mathcal{I}som_{\bF_p}(\bF_p^{\rk \cL},\cL \otimes_{\bZ} \bF_p)$. If $X$ is connected, then after fixing a base point $x \in X$, the covering map $X(p) \rightarrow X$ corresponds to the action of $\pi_1(X,x)$ on the set of basis of the $\bF_p$-vector space $(\cL \otimes_{\bZ} \bF_p)_x$. Note that this action is not necessarily transitive, so that $X(p)$ might not be connected. 

The following result is a vast generalization of the main result of \cite{Nad89}.
\begin{theorem}\label{thm:generalization of Nadel}
Assume that $\cL$ underlies a variation of complex polarized Hodge structures with a quasi-finite period map. Then, for all but finitely many prime numbers $p$, any proper algebraic variety $\overline{X(p)}$ that compactifies $X(p)$ is Borel hyperbolic modulo the boundary $\overline{X(p)} \backslash X(p)$.
\end{theorem}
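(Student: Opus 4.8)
The plan is to derive Theorem \ref{thm:generalization of Nadel} from the hyperbolicity criterion of Theorem \ref{thm:criterion for Borel hyperbolicity} applied to suitable smooth compactifications of the covers $X(p)$, by exploiting the fact that passing to the congruence cover $X(p)$ makes the canonical extension of the Griffiths line bundle positive enough to dominate the boundary divisor. First I would reduce to the situation where the base is a smooth quasi-projective variety: replacing $X$ by a resolution of an irreducible component and using that Borel hyperbolicity modulo a closed subset is insensitive to proper modifications and to passing to locally closed pieces, it suffices to treat the case where $X$ is smooth quasi-projective, $\cL$ underlies a polarized $\bZ$-VHS, and the period map is quasi-finite (hence, by Griffiths, the Griffiths line bundle on $X$ itself is big on each stratum; more precisely the local system has finite monodromy kernel so the period map is generically immersive after shrinking). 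After replacing each $X(p)$ by a connected component and choosing a smooth projective compactification $\bar{X}(p)$ with simple normal crossing boundary $D_p = \bar{X}(p)\setminus X(p)$, and after a base change to arrange unipotent monodromy around $D_p$ (at the cost of discarding finitely many primes, or by invoking the quasi-unipotent version from the second Remark after Theorem \ref{thm:SMT for VHS}), Theorem \ref{thm:criterion for Borel hyperbolicity} tells us that $\bar{X}(p)$ is Borel hyperbolic modulo $D_p \cup \B_+\big(\bar L_{\bV}(-\tfrac{w^2}{4}\rk\cL\cdot D_p)\big)$. So the whole problem becomes: for all but finitely many $p$, the augmented base locus of $\bar L_{\bV}(-\tfrac{w^2}{4}\rk\cL\cdot D_p)$ is contained in $D_p$, i.e. this $\bQ$-line bundle is ample modulo the boundary.

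The key input for the last point is a boundedness statement for the geometry of the congruence covers. The Griffiths line bundle $\bar L_{\bV}$ on $\bar X(p)$ is the canonical extension of $L_{\bV}$, which carries the Griffiths-Schmid (Hodge) metric with semipositive curvature; since the period map is quasi-finite, $L_{\bV}$ is big and nef on the generic stratum, so $\bar L_{\bV}$ is big modulo the boundary. The point is to control the coefficient of the boundary: one needs that the boundary components $D_p$ of the congruence covers become ``thin'' relative to $\bar L_{\bV}$ as $p\to\infty$. This is exactly the mechanism in Nadel's original argument and in Deng \cite{deng2020big}: a deep point (ultimately an effective Manin--Drinfeld / Nadel-type estimate, or a volume argument) shows that the number of boundary components and their multiplicities grow slower than the degree of $\bar L_{\bV}$ along any curve, so that for $p$ large $\bar L_{\bV}(-\tfrac{w^2}{4}\rk\cL\cdot D_p)$ restricted to any subvariety not contained in $D_p$ has positive top self-intersection, hence the augmented base locus drops into $D_p$. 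Concretely I would argue as follows: fix one model $\bar X = \bar X(p_0)$; the tower $\{\bar X(p)\}$ is built by further étale covers $U(p)\to U$ that, near the boundary, look like ramified covers $z\mapsto z^{N_p}$ in the transverse coordinate with $N_p\to\infty$; pulling back, the canonical extension changes by $\bar L_{\bV,p} = \nu_p^*\bar L_{\bV} + (\text{correction supported on } D_p)$ where the correction has coefficients of size $O(1)$ times $(N_p-1)/N_p < 1$, while the ``$+D_p$'' terms coming from $K$ grow like $N_p-1$; comparing shows $\bar L_{\bV,p}(-\tfrac{w^2}{4}\rk\cL\cdot D_p)$ is $\nu_p^*$ of something of the form $\bar L_{\bV} - \tfrac{c}{N_p}D$, whose augmented base locus is contained in $D$ once $N_p$ is large because $\bar L_{\bV}$ is already big modulo $D$ and augmented base loci vary upper-semicontinuously in the $\bQ$-divisor.

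I would then conclude: for all but finitely many $p$ one has $\B_+\big(\bar L_{\bV,p}(-\tfrac{w^2}{4}\rk\cL\cdot D_p)\big)\subseteq D_p$, so Theorem \ref{thm:criterion for Borel hyperbolicity} gives that $\bar X(p)$ is Borel hyperbolic modulo $D_p$; since Borel hyperbolicity modulo the boundary is a birational-over-the-open-part invariant, the same holds for \emph{any} proper compactification $\overline{X(p)}$ of $X(p)$, which is the assertion. The main obstacle is precisely the quantitative control in the second paragraph: one must show that the positivity of the Griffiths line bundle genuinely beats the boundary contribution in the whole congruence tower, uniformly, which requires either Nadel's arithmetic input on the growth of the congruence level versus the ramification at infinity, or equivalently a curvature estimate for the Griffiths--Schmid metric near the boundary of the kind established in the preceding sections of the paper (bounding the holomorphic sectional curvature and hence the order of growth of the metric), combined with the fact that the period map being quasi-finite forces $L_{\bV}$ to be big. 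Everything else — the reductions to the smooth quasi-projective SNC setting, the unipotent base change, the invariance under compactification — is standard and can be dispatched with the tools already assembled.
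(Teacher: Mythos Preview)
Your overall architecture matches the paper's proof: reduce to the smooth case, apply Theorem \ref{thm:criterion for Borel hyperbolicity} on a log-smooth model of each $X(p)$, and then show that for almost all $p$ the augmented base locus of $\bar L_{\bV}(-c\cdot D_p)$ is contained in $D_p$. The paper carries this out cleanly by invoking two black-box results from \cite{Bruni-level}: Proposition 2.4, which says that the quasi-finiteness of the period map forces $\B_+(\bar L_{\bV})\subset D$ already on a fixed model $\bar X$ (hence $\B_+(\bar L_{\bV}(-\epsilon D))\subset D$ for small $\epsilon$), and Theorem 5.1, which says that the normalization $\pi_p:\bar X(p)\to\bar X$ ramifies to order divisible by $p$ along every component of $D$ for almost all $p$. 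Pulling back then gives $\pi_p^*\bar L_{\bV}(-\epsilon p\cdot D(p))=\pi_p^*(\bar L_{\bV}(-\epsilon D))+(\text{effective supported on }D(p))$, whose augmented base locus lies in $D(p)$; taking $p$ large enough that $\epsilon p\geq \tfrac{w^2}{4}\rk\cL$ finishes the argument.

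Your concrete ramification computation in the second paragraph (``$\nu_p^*$ of something of the form $\bar L_{\bV}-\tfrac{c}{N_p}D$'') is exactly this, so that part is right. What you should drop is the detour through ``Nadel's arithmetic input'', ``effective Manin--Drinfeld'', volume estimates, or curve-by-curve degree comparisons: none of that is needed, and it obscures the fact that the argument is a one-line pullback computation once the two inputs above are granted. Also note that your step ``$L_{\bV}$ big and nef on the generic stratum, so $\bar L_{\bV}$ is big modulo the boundary'' hides real content --- bigness on the open part does not formally imply $\B_+(\bar L_{\bV})\subset D$ --- and this is precisely where the paper defers to \cite{Bruni-level}. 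Likewise the growth $N_p\to\infty$ of the ramification is not automatic and is the second nontrivial citation.
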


%%%%%%%%%%%%%%%
\subsection{A Second Main Theorem for pseudo-metrics with negative holomorphic sectional curvature}
The first main input of the proof of Theorem \ref{thm:SMT for VHS} is the following \textit{Second Main Theorem} for pseudo-metrics with negative holomorphic sectional curvature (see also Theorem \ref{thm:SMTlocal}). 

\begin{theorem}[{= Theorem \ref{thm:SMTglobal}}]\label{thm:SMT for pseudo-metrics with negative holomorphic sectional curvature}
Let \(X\) be a smooth projective variety and let \(D\) be a reduced  divisor on \(X\). Suppose that \(X \setminus D\) is endowed with a Finsler pseudo-metric $h$ of class \(\mathscr{C}^{\infty}\) with holomorphic sectional curvature bounded above by \(-\gamma<0\). Assume that the degeneracy set of $h$ is contained in a nowhere-dense closed analytic subset of $X \setminus D$. Let $B$ be a non-compact parabolic Riemann surface equipped with a parabolic exhaustion function $\sigma$ and a non-constant holomorphic map \(f:B\to X\) whose image $f(B)$ is not contained in $D$ nor in the degeneracy set of $h$. If  \(\omega\) denotes the \((1,1)\)-form associated to the induced pseudo-metric $f^\ast h$ on \(B\setminus (f^{-1} (D))_{\rm red}\), then \(\omega\) is locally integrable on \(B\) and there exists a positive real number $C$ such that the following inequality
\begin{equation*}
T_{[\omega]}(r)\leqslant \frac{2\pi}{\gamma}\big(N^{[1]}_{f,D}(r) -\mathfrak{X}_{\sigma}(r)\big)+C \cdot (\log r+\log T_{[\omega]}(r))
\end{equation*}
holds for all $r \in \R_{\geq 1}$ outside a subset of finite Lebesgue measure.
\end{theorem}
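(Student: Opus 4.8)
The plan is to convert the curvature hypothesis into a differential inequality on $B$ for the conformal factor of $f^\ast h$, and then feed it into the value-distribution calculus for parabolic Riemann surfaces.

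\emph{Step 1 (curvature inequality).} Let $E\subset B$ be the (discrete) union of $f^{-1}(D)$, of the preimage of the degeneracy locus of $h$, and of the ramification points of $f$. On $B\setminus E$ the pullback $f^\ast h$ is a $\mathscr C^\infty$ Hermitian metric on $T_B$; write it locally as $\lambda\,|dz|^2$ with $\lambda>0$, so that $\omega=\frac{i}{2}\lambda\,dz\wedge d\bar z$. Since $f$ is holomorphic and the holomorphic sectional curvature of the Finsler metric $h$ is $\leq-\gamma$, the Gaussian curvature of $f^\ast h$ is $\leq-\gamma$ on $B\setminus E$, which is precisely the pointwise inequality
\[
dd^c\log\lambda\ \geq\ \tfrac{\gamma}{2\pi}\,\omega\qquad\text{on }B\setminus E
\]
(the left-hand side being the Ricci form $\Ric(\omega)$), equivalently $\omega\leq\frac{2\pi}{\gamma}\,dd^c\log\lambda$ there. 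In particular $\log\lambda$ is subharmonic on $B\setminus f^{-1}(D)$; it tends to $-\infty$, across which it extends subharmonically, at the ramification points of $f$ and along the preimage of the degeneracy locus.

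\emph{Step 2 (behaviour near $D$, and local integrability).} Around $p\in f^{-1}(D)$ the map $f$ sends a punctured disc $\Delta^\ast$ into $X\setminus D$; composing with the universal cover $\Delta\to\Delta^\ast$ and applying the Ahlfors--Schwarz lemma to the complete hyperbolic metric of $\Delta$ (rescaled to curvature $-\gamma$) and to $h$, one gets that, near $p$, $f^\ast h$ is dominated by $\frac{1}{\gamma}$ times the complete hyperbolic metric of $\Delta^\ast$, whose conformal factor is $O(|z|^{-2}(\log|z|)^{-2})$. Being locally integrable, this forces $\omega\in L^1_{\mathrm{loc}}(B)$ (which is needed even to define $T_{[\omega]}$); moreover $\log\lambda\leq 2\log\frac{1}{|z|}+O(1)$ near $p$, so the subharmonic function $\log\lambda$ has there an upward logarithmic singularity of coefficient $\nu_p\leq 2$, i.e.\ $dd^c\log\lambda$ carries an atom of mass $-\nu_p/2\in[-1,0]$ at $p$.

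\emph{Step 3 (integration and the Nevanlinna endgame).} Integrate $\omega\leq\frac{2\pi}{\gamma}\,dd^c\log\lambda$ over $B(t)$ with small discs around the points of $E\cap B(t)$ removed and let the radii tend to $0$. By Stokes' theorem and Step 2 the boundary contributions of $d^c\log\lambda$ at the ramification and degeneracy points are $\leq 0$ and are discarded, while each point of $f^{-1}(D)\cap B(t)$ contributes at most $1$; since $\omega\in L^1_{\mathrm{loc}}$, one obtains for every regular value $t$
\[
\int_{B(t)}\omega\ \leq\ \frac{2\pi}{\gamma}\Bigl(\int_{\partial B(t)}d^c\log\lambda\ +\ \#\bigl(f^{-1}(D)\cap B(t)\bigr)\Bigr),
\]
which, after applying $\int_1^r(\,\cdot\,)\frac{dt}{t}$, reads $\frac{\gamma}{2\pi}T_{[\omega]}(r)\leq T_{dd^c\log\lambda}(r)+N^{[1]}_{f,D}(r)$ (the ramification counting function, which would appear on the left, is nonnegative and is dropped). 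It remains to bound $T_{dd^c\log\lambda}(r)$ from above. Writing $\log\lambda=\log\rho_0+g$ with $\rho_0$ a fixed $\mathscr C^\infty$ positive weight on $T_B$ and $g$ a globally defined function (with logarithmic singularities along $E$), the parabolic Jensen formula (Section \ref{ssec:Nevanlinna}) gives $T_{dd^c\log\lambda}(r)=T_{dd^c\log\rho_0}(r)+\int_{\partial B(r)}g\,d\mu_r+O(1)$; the Gauss--Bonnet identity for the exhaustion $\sigma$ (using that $\log\sigma$ is harmonic off a compact set to control the geodesic curvature of the level curves) identifies $T_{dd^c\log\rho_0}(r)$ with $-\mathfrak X_\sigma(r)+O(\log r)$; and the boundary mean $\int_{\partial B(r)}g\,d\mu_r\leq\int_{\partial B(r)}\log^+(\lambda/\rho_0)\,d\mu_r$ is bounded, by concavity of $\log$ together with the coarea formula and two applications of Borel's (``calculus'') lemma, by $O(\log r+\log T_{[\omega]}(r))$ outside a Borel set of finite Lebesgue measure. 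Combining these and multiplying by $\frac{2\pi}{\gamma}$ yields the claimed inequality.

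\emph{Main obstacle.} Steps 1 and 2 are essentially standard (modulo the Finsler-vs.-Hermitian point in Step 1, and the fact that Ahlfors--Schwarz is genuinely needed to get $\omega\in L^1_{\mathrm{loc}}$, not merely the estimate near $D$). The real work lies in Step 3: carrying out the bookkeeping of boundary terms on the level sets of a \emph{general} parabolic exhaustion, isolating exactly the combination $-\mathfrak X_\sigma(r)+N^{[1]}_{f,D}(r)$ with the sharp constant $\frac{2\pi}{\gamma}$, and taming the error so that it has the stated shape $\log r+\log T_{[\omega]}(r)$ and lives outside a set of finite measure — this is where the Stoll calculus for parabolic Riemann surfaces and Borel's lemma are indispensable.
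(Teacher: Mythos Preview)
Your strategy coincides with the paper's: the curvature hypothesis gives $\gamma\,\omega\leq -\Ric\omega$ pointwise, Ahlfors--Schwarz controls the growth along $\Sigma=(f^{-1}D)_{\rm red}$, the comparison of $[-\Ric\omega]$ with $-\Ric[\omega]$ produces the counting function with coefficient~$1$, and $T_{-\Ric[\omega]}$ is bounded via Jensen's formula, concavity of $\log$, the coarea identity, and two applications of Borel's lemma. Two points deserve comment.

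First, there is a genuine slip in Step~2: near $p\in f^{-1}(D)$ the function $\log\lambda$ is \emph{not} subharmonic on the full disc --- it is only bounded above by $-\log|z|^2+O(1)$ and typically blows up to $+\infty$ --- so speaking of ``the subharmonic function $\log\lambda$'' and ``its atom'' is not correct as stated. The function that extends subharmonically is $\psi:=\log(|z|^2\lambda)$: it is bounded above by the Schwarz estimate and satisfies $dd^c\psi=dd^c\log\lambda\geq 0$ on $\Delta^*$; then $[dd^c\psi]\leq dd^c[\psi]$ together with Lelong--Poincar\'e gives $[dd^c\log\lambda]\leq dd^c[\log\lambda]+[\Sigma]$, which is exactly the ``at most $+1$ per point of $\Sigma$'' that you use in Step~3. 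With this correction your disc-removal Stokes argument goes through. (Incidentally, the atom of $dd^c[\log\lambda]$ at $p$ lies in $[-1,\infty)$, not $[-1,0]$; only the lower bound is needed.)

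Second, for the Euler-characteristic term you introduce a background metric $\rho_0$, split $\log\lambda=\log\rho_0+g$, and invoke Gauss--Bonnet on $B(t)$; this is valid but the control of the geodesic curvature of $S(r)$ needed for the $O(\log r)$ error is not free. The paper takes a slightly different route: since $B$ is open, $T_B$ is holomorphically trivial, so one fixes a nowhere-vanishing $\xi\in\Gamma(B,T_B)$, making $\log\lambda=\log\|\xi\|_h^2$ a \emph{globally defined} function to which Jensen applies directly. The identity $\mathfrak{X}_\sigma(r)=-\int_{S(r)}\log|d_\xi\sigma|^2\,d\mu_r+O(\log r)$ is then proved via Poincar\'e--Hopf applied to the vector field $\overline{\partial_\xi\sigma}\cdot\xi$, and the remaining boundary term $\int_{S(r)}\log(\|\xi\|_h^2/|d_\xi\sigma|^2)\,d\mu_r$ is exactly what concavity~$+$~coarea~$+$~Borel handles. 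This bypasses the $\rho_0$-splitting and the geodesic-curvature bookkeeping altogether.
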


Our proof of this result is very much inspired by the work of Nadel \cite{Nad89} and Aihara-Noguchi \cite{A-N91}. We generalize their arguments to parabolic Riemann surfaces by using a slight modification of the approach of P\u{a}un and Sibony \cite{P-S14} to Nevanlinna Theory.

%%%%%%%%%%%%%%%%%%%
\subsection{The holomorphic sectional curvature of the Griffiths-Schmid metric}
The second main input towards the proof of Theorem \ref{thm:SMT for VHS} is the following result that describes a pseudo-metric with nice negative curvature properties on any manifold supporting a variation of Hodge structures. For the most part it is well-known to experts, but to our knowledge the upper bound on the holomorphic sectional curvature is new.

\begin{theorem}\label{thm:holomorphic sectional curvature for VHS}
Let $(\mathcal{L}, \cF^{\sbt}, h)$ be a variation of complex polarized Hodge structures of length $w$ on a complex manifold $S$. The first Chern form of its Griffiths line bundle $L_{\mathbb{V}} := \otimes_p \det \cF^p$ equipped with the hermitian metric induced by $h$ is a closed positive real $(1,1)$-form, which coincides after multiplication by $2 \pi$ with the pull-back of the K\"ahler form of the Griffiths-Schmid metric on the corresponding  period domain. The corresponding K\"ahler pseudo-metric is non-degenerate on the Zariski-open subset of $S$ where the period map is immersive, has non-positive holomorphic bisectional curvature and its holomorphic sectional curvature $- \gamma$ satisfies 
\[ \frac{1}{\gamma}   \leqslant \frac{w^2}{4} \cdot \rk( \cL). \] 
\end{theorem}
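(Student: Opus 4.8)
The plan is to compute everything via the Higgs bundle of $\mathbb{V}$. Write $\cL = \bigoplus_p \cH^p$ for the $\mathscr{C}^{\infty}$ Hodge decomposition, $h_p$ for the Hodge metric induced by the polarization on $\cH^p$, and decompose the flat Gauss--Manin connection as $\nabla = \theta + D^{(0)} + \theta^{\ast}$, where $D^{(0)} = \bigoplus_p D^{(0)}_p$ is the sum of the Chern connections of the $(\cH^p,h_p)$, where $\theta = \bigoplus_p \theta_p$ with $\theta_p \colon \cH^p \to \cH^{p-1} \otimes \Omega^1_S$ the Higgs field (the differential of the period map), and where $\theta^{\ast}$ is the $h$-adjoint of $\theta$. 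Flatness of $\nabla$ yields $\theta \wedge \theta = 0$, $D^{(0)} \theta = 0$, and the curvature identity $\Theta_{D^{(0)}} = - [\theta, \theta^{\ast}]$, which on the $p$-th summand reads $\Theta_{D^{(0)}_p} = - (\theta_p^{\ast} \wedge \theta_p + \theta_{p+1} \wedge \theta_{p+1}^{\ast})$. Since $\cF^p = \bigoplus_{q \geqslant p} \cH^q$, one has $L_{\mathbb{V}} = \bigotimes_p \det \cF^p = \bigotimes_q (\det \cH^q)^{\otimes(q+1)}$, hence $c_1(L_{\mathbb{V}},h) = \sum_q (q+1)\, \tfrac{i}{2\pi} \tr \Theta_{D^{(0)}_q}$; plugging in the curvature identity, using $\tr(\theta_q \wedge \theta_q^{\ast}) = - \tr(\theta_q^{\ast} \wedge \theta_q)$, and summing by parts (with $\theta_0 = \theta_{w+1} = 0$) collapses this telescoping sum to
\[ c_1(L_{\mathbb{V}},h) \;=\; \frac{i}{2\pi} \sum_{q=1}^{w} \tr(\theta_q \wedge \theta_q^{\ast}) . \]
Each summand is a semipositive $(1,1)$-form, so $c_1(L_{\mathbb{V}},h)$ is a closed real semipositive $(1,1)$-form; it is positive at a point exactly when $\theta \colon T_S \to \bigoplus_p \Hom(\cH^p,\cH^{p-1})$ is injective there, i.e.\ on the Zariski-open locus where the period map is immersive; and $2\pi\, c_1(L_{\mathbb{V}},h)$ is, by definition of the Griffiths--Schmid metric, the pullback of its Kähler form. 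This proves every assertion except the two curvature bounds.

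For those I would argue pointwise. Fix $s \in S$ and $\xi \in T_{S,s}$ (at a point and in a direction where the pseudo-metric is non-degenerate), and put $A_p := \theta_p(\xi) \in \Hom(\cH^p_s,\cH^{p-1}_s)$, with the convention $A_p = 0$ unless $1 \leqslant p \leqslant w$. By the formula above, the squared norm of $\xi$ for the pseudo-metric $g := 2\pi\, c_1(L_{\mathbb{V}},h)$ equals $\| \xi \|_g^2 = \sum_p \| A_p \|^2$, all norms being Hilbert--Schmidt norms for the Hodge metrics. The curvature of the Griffiths--Schmid metric in horizontal directions was computed by Griffiths and Schmid, and in the direction $\xi$ it has the shape
\[ R(\xi,\bar\xi,\xi,\bar\xi) \;=\; - \sum_p \big\| A_p^{\ast} A_p - A_{p+1} A_{p+1}^{\ast} \big\|^2 \;-\; (\text{a non-negative term}) , \]
while for arbitrary $\eta$ the expression $R(\xi,\bar\xi,\eta,\bar\eta)$ is a non-positive combination of squared norms; the latter gives the non-positivity of the holomorphic bisectional curvature. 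It therefore remains to show $R(\xi,\bar\xi,\xi,\bar\xi) \leqslant - \tfrac{4}{w^2 \rk \cL} \| \xi \|_g^4$, for which (discarding the non-negative term) it suffices to establish the pointwise linear-algebra inequality
\[ \sum_p \big\| A_p^{\ast} A_p - A_{p+1} A_{p+1}^{\ast} \big\|^2 \;\geqslant\; \frac{4}{w^2 \cdot \rk \cL} \Big( \sum_p \| A_p \|^2 \Big)^2 . \]

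To prove it, set $b_p := \| A_p \|^2 = \tr(A_p^{\ast} A_p) = \tr(A_p A_p^{\ast}) \geqslant 0$ (so $b_p = 0$ unless $1 \leqslant p \leqslant w$) and $B := \sum_p b_p$. The endomorphism $A_p^{\ast} A_p - A_{p+1} A_{p+1}^{\ast}$ is self-adjoint on the $(\rk \cH^p)$-dimensional space $\cH^p_s$ and has trace $b_p - b_{p+1}$, so $\| A_p^{\ast} A_p - A_{p+1} A_{p+1}^{\ast} \|^2 \geqslant (b_p - b_{p+1})^2 / \rk \cH^p$ since $\| M \|^2 \geqslant (\tr M)^2/\dim$ for any self-adjoint $M$. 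By Cauchy--Schwarz and $\sum_p \rk \cH^p = \rk \cL$,
\[ \sum_p \frac{(b_p - b_{p+1})^2}{\rk \cH^p} \;\geqslant\; \frac{1}{\rk \cL}\Big( \sum_p |b_p - b_{p+1}| \Big)^2 . \]
Finally, if $M := \max_p b_p$, attained at some index $1 \leqslant p_0 \leqslant w$, the triangle inequality applied to the chains $0 = b_0, \dots, b_{p_0} = M$ and $M = b_{p_0}, \dots, b_{w+1} = 0$ gives $\sum_p |b_p - b_{p+1}| \geqslant 2M$, and $M \geqslant B/w$ because $B$ is a sum of $w$ numbers each at most $M$; hence $\sum_p |b_p - b_{p+1}| \geqslant 2B/w$. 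Chaining the three estimates yields $\sum_p \| A_p^{\ast} A_p - A_{p+1} A_{p+1}^{\ast} \|^2 \geqslant 4B^2/(w^2 \rk \cL) = \tfrac{4}{w^2 \rk \cL}(\sum_p \|A_p\|^2)^2$, which is the desired inequality, so $1/\gamma \leqslant \tfrac{w^2}{4} \rk \cL$. The only genuinely delicate point in this scheme is to write down the Griffiths--Schmid curvature formula in a normalization compatible with the factor $2\pi$ relating $c_1(L_{\mathbb{V}},h)$ to the Griffiths--Schmid Kähler form; once this is fixed, the linear-algebra inequality is elementary, and it is already sharp in the weight-one, rank-two case, where it recovers the constant holomorphic sectional curvature $-2$.
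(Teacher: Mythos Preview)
Your proof is correct and follows essentially the same route as the paper: set up the Higgs bundle, invoke Griffiths' curvature computation to identify $c_1(L_{\mathbb V},h)$ with the Griffiths--Schmid form and to bound the bisectional and sectional curvatures by $-\lvert[\phi^\ast(\bar\xi),\phi(\xi)]\rvert^2/\lvert\xi\rvert^4$, and then reduce everything to the pointwise linear-algebra inequality $\lvert[u,u^\ast]\rvert^2\geqslant \tfrac{4}{w^2\dim V}\lvert u\rvert^4$ for $u\in\End(V)^{-1}$.

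The one place where your argument and the paper's diverge is the second half of that inequality. Both of you start with the same step, namely $\lVert A_p^\ast A_p-A_{p+1}A_{p+1}^\ast\rVert^2\geqslant(b_p-b_{p+1})^2/\rk\cH^p$ via $\lVert M\rVert^2\geqslant(\tr M)^2/\dim$ for self-adjoint $M$. From there the paper splits the sum $\sum_i(b_{i+1}-b_i)^2/r_i$ at each index $p$ into two telescoping pieces, applies Cauchy--Schwarz to each to get $\geqslant b_p^2/f_p+b_p^2/(\dim V-f_p)\geqslant 4b_p^2/\dim V$, averages over $p=1,\dots,w$, and finishes with one more Cauchy--Schwarz on $\sum b_p^2$. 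You instead apply Cauchy--Schwarz once to the whole sum with absolute values, obtaining $\geqslant(\sum\lvert b_p-b_{p+1}\rvert)^2/\rk\cL$, and then use the total-variation/maximum observation $\sum\lvert b_p-b_{p+1}\rvert\geqslant 2\max b_p\geqslant 2B/w$. Your route is a bit shorter and avoids the averaging step; the paper's route has the mild advantage that it actually gives the slightly stronger intermediate bound $\sum_i(b_{i+1}-b_i)^2/r_i\geqslant \tfrac{4}{\dim V}\max_p b_p^2$ before the final Cauchy--Schwarz. Both are elementary and yield the same constant.
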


\begin{remark}
Note that when $(\mathcal{L}, \cF^{\sbt}, h)$ is isomorphic to its dual (this holds for example if it is a variation of real polarized Hodge structures), then 
\[ \frac{w^2}{4}  \cdot \rk( \cL) =   \frac{w}{2} \cdot \sum_{i= 1}^p \rk( \cF^p) . \]
\end{remark}

%%%%%%%%%%%%%%%%%%%%%%%%%%
\subsection{Second Main Theorem for hyperbolically embedded complements}
In a different direction, applying Theorem \ref{thm:SMT for pseudo-metrics with negative holomorphic sectional curvature}  to the Kobayashi metric yields a \textit{Second Main Theorem} for pairs \((X,D)\) where \(X\setminus D\) is hyperbolically embedded in \(X\).  While this result is independent of  the other main applications of the present paper, it also goes in the direction of the Griffiths conjecture and we therefore felt that it was noteworthy to mention it here.  The algebraic counterpart of this statement was obtained by Pacienza and Rousseau \cite{PR07}. 

\begin{theorem}[{= Theorem \ref{thm:SMTHyp}}]
Let \(X\) be a smooth projective variety and let \(H\) be a reduced divisor on \(X\) such that \(X\setminus H\) is hyperbolically embedded in \(X\). Let \(A\) be  an ample line bundle on \(X\). There exists a constant \(\alpha >0\) such that for any non-compact parabolic Riemann surface $B$ equipped with a parabolic exhaustion function $\sigma$ and a non-constant holomorphic map \(f:B\to X\) such that \(f(B)\not\subset H\), there exists a positive real number $C$ such that the following inequality
\[T_{f,A}(r)\leqslant \alpha\big(N^{(1)}_{f,H}(r)-\mathfrak{X}_\sigma(r)\big)+C \cdot \log r\]
holds for all $r \in \R_{\geq 1}$ outside a subset of finite Lebesgue measure.
\end{theorem}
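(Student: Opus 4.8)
The plan is to deduce this Second Main Theorem for hyperbolically embedded complements directly from Theorem~\ref{thm:SMT for pseudo-metrics with negative holomorphic sectional curvature} applied to a well-chosen pseudo-metric on $X\setminus H$. Since $X\setminus H$ is hyperbolically embedded in $X$, the Kobayashi--Royden infinitesimal pseudo-metric $\kappa_{X\setminus H}$ is, up to a positive constant, bounded below by the restriction of a fixed smooth Hermitian metric coming from $X$; in particular it is a genuine (possibly non-smooth) Finsler metric and, by Royden's characterization of the Kobayashi metric via the Schwarz--Pick lemma, its holomorphic sectional curvature is bounded above by $-4$ (the curvature of the Poincar\'e disc) in the appropriate upper-semicontinuous/current sense. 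The technical nuisance is that $\kappa_{X\setminus H}$ is not $\mathscr{C}^\infty$, so it does not literally satisfy the hypotheses of Theorem~\ref{thm:SMT for pseudo-metrics with negative holomorphic sectional curvature}; the standard remedy, going back to Royden and used by Pacienza--Rousseau in the algebraic setting, is to regularize --- either by a convolution/approximation argument producing smooth Finsler metrics $h_\epsilon$ with holomorphic sectional curvature $\leqslant -\gamma_\epsilon$ for $\gamma_\epsilon\to 4$, or by invoking the version of the SMT stated in the text (``see also Theorem~\ref{thm:SMTlocal}'') which presumably allows metrics with mild singularities. I would first carry out this reduction and fix, once and for all, such a Finsler pseudo-metric $h$ on $X\setminus H$ with holomorphic sectional curvature bounded above by some $-\gamma<0$ and with degeneracy set empty (it is everywhere positive by hyperbolic embeddedness).

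Next I would run Theorem~\ref{thm:SMT for pseudo-metrics with negative holomorphic sectional curvature} with this $h$: for any non-constant $f\colon B\to X$ with $f(B)\not\subset H$, the pulled-back form $\omega=f^\ast h$ is locally integrable on $B$ and satisfies
\[
T_{[\omega]}(r)\leqslant \frac{2\pi}{\gamma}\big(N^{[1]}_{f,H}(r)-\mathfrak{X}_\sigma(r)\big)+C\cdot(\log r+\log T_{[\omega]}(r))
\]
outside a set of finite measure. It then remains to compare $T_{[\omega]}(r)$ with $T_{f,A}(r)$. For this I would use that $\kappa_{X\setminus H}$ dominates (up to a constant) the restriction to $X\setminus H$ of a smooth metric on an ample line bundle $A$ on $X$: concretely, because $H$ is a divisor, one can choose a smooth metric on $\mathcal{O}_X(A)$ and, after possibly replacing $A$ by a multiple, arrange that its curvature form $C_1(A,h_A)$ restricted to $X\setminus H$ is $\leqslant$ a constant multiple of the $(1,1)$-form associated to the Kobayashi metric (this is exactly the estimate that makes hyperbolic embeddedness useful; it is where the ampleness and properness of $X$ enter, via compactness of $X$). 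Pulling back by $f$ and integrating twice in the Nevanlinna sense gives $T_{f,A}(r)\leqslant \mathrm{const}\cdot T_{[\omega]}(r)+O(\log r)$; combined with the displayed inequality and absorbing the term $\log T_{[\omega]}(r)\leqslant \log T_{f,A}(r)+O(1)$, we obtain the asserted bound with a suitable $\alpha>0$ depending only on $X$, $H$, $A$.

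The main obstacle is the first step: establishing, with full rigor, the holomorphic sectional curvature bound for the (non-smooth) Kobayashi metric and bridging the gap to the smoothness hypothesis of Theorem~\ref{thm:SMT for pseudo-metrics with negative holomorphic sectional curvature}. The cleanest route is to appeal to the singular/local version promised in the text (Theorem~\ref{thm:SMTlocal}), which should be designed precisely to accommodate Finsler pseudo-metrics that are merely locally bounded with a curvature current bounded above by $-\gamma\,\omega_{h}$; once that version is available the Kobayashi metric is a direct input, using Royden's lemma that the Kobayashi--Royden metric is the largest pseudo-metric with holomorphic sectional curvature $\leqslant -4$, hence a fortiori has curvature $\leqslant -4$ in the current sense. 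A secondary, purely bookkeeping, point is to track that all implicit constants can be absorbed into a single $\alpha$ and a single $C$ with the claimed dependence (in particular $C$ is allowed to depend on $f$ while $\alpha$ is not), which follows since the curvature comparison $C_1(A,h_A)\leqslant \alpha'\,\omega_h$ on $X\setminus H$ is uniform in $f$.
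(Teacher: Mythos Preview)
Your overall strategy is sound but takes a harder road than the paper, and the main obstacle you flag is one the paper sidesteps entirely. The paper does \emph{not} apply Theorem~\ref{thm:SMT for pseudo-metrics with negative holomorphic sectional curvature} to the Kobayashi--Royden metric on the target $X\setminus H$. Instead it works on the \emph{source}: setting $\Sigma=(f^*H)_{\rm red}$ and $U=B\setminus\Sigma$, the restriction $f|_U$ lands in $V=X\setminus H$, which is hyperbolic, hence $U$ is hyperbolic. On the Riemann surface $U$ the Kobayashi metric $\omega_U$ is the Poincar\'e metric descended from the universal cover $\Delta$; it is automatically $\mathscr{C}^\infty$ with constant Gaussian curvature $-4$, so Corollary~\ref{cor:SMTKoba} (a direct specialization of Theorem~\ref{thm:SMTlocal}) gives the SMT for $T_{[\omega_U]}$ with no regularity issues. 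The comparison with $T_{f,A}$ is then a two-step chain: the distance-decreasing property of Kobayashi metrics gives $f^*F_V\leqslant\|\cdot\|_U$, and hyperbolic embeddedness gives $\eta\,\|\cdot\|_{h_A}|_V\leqslant F_V$; combining, $\eta f^*\omega_A\leqslant\omega_U$ and hence $\eta\,T_{f,A}(r)\leqslant T_{[\omega_U]}(r)$. This avoids altogether the non-smoothness of $\kappa_{X\setminus H}$ that you identify as the crux.

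Your proposed fixes for that obstacle are not quite available in the paper: Theorem~\ref{thm:SMTlocal} concerns pseudo-metrics on the Riemann surface $B$, not on $X$, so it does not directly accept a non-smooth Finsler metric on the target; and a regularization of the Kobayashi metric on a higher-dimensional $X\setminus H$ preserving a uniform negative holomorphic sectional curvature bound is a nontrivial statement not supplied here. Separately, your absorption step ``$\log T_{[\omega]}(r)\leqslant\log T_{f,A}(r)+O(1)$'' has the inequality the wrong way (you only know $T_{f,A}\leqslant C\,T_{[\omega]}$); the correct move, and the one the paper makes, is $\log T_{[\omega]}(r)\leqslant_{\rm exc}\varepsilon\,T_{[\omega]}(r)$ and absorb into the left-hand side.
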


%%%%%%%%%%%%%%%%%%%%%%%%%%
\subsection{Organization of the paper}
In Section \ref{sec:Nevanlinna} we give a proof of Theorem \ref{thm:SMT for pseudo-metrics with negative holomorphic sectional curvature} that does not assume any prior knowledge in Nevanlinna Theory. In Section \ref{sec:Hodge} we first recall the basic definitions from variational Hodge Theory and then prove Theorem \ref{thm:holomorphic sectional curvature for VHS}.
The proof of Theorem \ref{thm:SMT for VHS} is given in Section \ref{sec:proof SMT for VHS} and the applications (Theorem \ref{thm:criterion for Borel hyperbolicity} and Theorem \ref{thm:generalization of Nadel}) are given in Section \ref{sec:proof applications}. Finally, we prove Theorem \ref{thm:SMTHyp} in Section \ref{sec:SMTHyp}.

%%%%%%%%%%%%%%%%%%%%%%%%%%%%%%%%%%%%%%%%%%%%%%%%%%%%%
\section{Second Main Theorem for pseudo-metrics with negative holomorphic sectional curvature}\label{sec:Nevanlinna}
\subsection{Metric and curvature} Let us start by recalling some standard notations and definitions. Let \(B\) be a Riemann surface. A pseudo-metric on \(B\) is a map \(h:T_B\times_B T_B\to \C\) of class  \(\mathscr{C}^{\infty}\), such that for any \(x\in B\) the restricted map \(h_x:T_{B,x}\times T_{B ,x}\to \C\) is a symmetric sesquilinear form. To such a pseudo-metric, we associate the pseudo-norm \(\|\cdot\|_h:T_B\to \R^+\) defined by \(\|\xi\|^2_h=h(\xi,\xi)\) for any \(\xi \in T_B\). For any \(x\in B\), the restriction of \(\|\cdot\|_h\) to \(T_{B,x}\) will be denoted by \(\|\cdot\|_{h,x}:T_{B,x}\to \R^+\). The degeneracy set of the pseudo-metric \(h\) will be denoted by
\[\Sigma_h:=\left\{x\in B\ ; \ \|\cdot\|_{h,x}\equiv 0\right\}.\]
We will only consider pseudo-metrics  whose degeneracy set is a discrete subset of $B$.  
Locally, given a holomorphic coordinate \(z\) on some open subset  \(U\subset B\), letting \(\lambda : U\to \R^+\), the function 
\[\lambda(z)=\left\|\frac{\partial}{\partial z}\right\|^2_{h,z},\]
 one defines the associated \((1,1)\)-form to the pseudo-metric \(h\) to be locally defined by
\[\omega=\frac{i}{2}  \lambda  dz\wedge d\bar{z}.\]
This defines a global \((1,1)\)-form on \(B\).
The Ricci curvature of \(h\) or equivalently, the Ricci curvature of \(\omega\) is defined to be
 \[\Ric \omega=-2\pi dd^c\log \lambda=-i\partial\bar{\partial}\log\lambda = -i\frac{\partial^2\log\lambda}{\partial z\partial\bar{z}}dz\wedge d\bar{z}.\]
 This is a \((1,1)\)-form defined on \(B\setminus \Sigma_h\). The Gaussian curvature of \(h\) is the function \(K:B\setminus \Sigma_h\to \R\) such that\footnote{In this article, we use the \(d^c\) operator defined by \(d^c=\frac{i}{4\pi}(\bar{\partial}-\partial)\), so that \(dd^c=\frac{i}{2\pi}\partial\bar{\partial}\).}
\[\Ric \omega=K  \omega.\]

 Let  \(\Sigma\subset B\) be a reduced subset (i.e. \(\Sigma\) is a discrete subset of \(B\)). Given a point \(p\in \Sigma\) and a local coordinate \(z\) centered at \(p\), writing \(\omega=\frac{i}{2}  \lambda dz\wedge d\bar{z}\), we say that:
\begin{itemize}
\item \(\omega\) has a \emph{logarithmic singularity} at \(p\) if there exists \(C\in \R^+\) such that 
\[\lambda\leqslant \frac{C}{|z|^2}.\]
\item \(\omega\) has a \emph{Poincaré singularity} at \(p\) if there exists \(C\in \R^+\) such that 
\[\lambda\leqslant \frac{C}{|z|^2(\log|z|^2)^2}.\]
\end{itemize}
If \(\log \lambda\) is locally integrable, then one can define the current \([\log \lambda]\). Therefore, if \(\omega\) is such that in any local chart \(\log \lambda\) is locally integrable (in which case, we say abusively that \(\log \omega\) is locally integrable), one can define on \(B\) the current \[\Ric[\omega]=-2\pi dd^c[\log\lambda].\] As usual, this means that for any \(\mathscr{C}^{\infty}\) function with compact support \(\varphi\), one has 
\(\Ric[\omega](\varphi):=-2\pi\int_B (\log \lambda) dd^c\varphi.\) One easily checks that this is a well-defined current on \(B\).

\subsection{A lemma for Ricci currents}
Let \(B\) be a Riemann surface and \(\Sigma\) be a reduced divisor on \(B\). Let \(h\) be a pseudo-metric on \(B\setminus \Sigma\) and let \(\omega\) be the associated \((1,1)\)-form. If \(\log \omega\) is locally integrable, one can define as above \(\Ric[\omega]\). On the other hand  if the Ricci form \(\Ric \omega\) (defined on \(B\setminus (\Sigma\cup \Sigma_h)\)) is locally integrable (on \(B\)), one can define the current \([\Ric \omega]\) on \(B\). The first step is to compare \([\Ric \omega]\) and \(\Ric[\omega]\). This is based on the following elementary lemma concerning subharmonic functions whose proof is left to the reader. 

\begin{lemma}\label{lem:SH} Let \(\psi\) be a  subharmonic and \(\mathscr{C}^\infty\) function on \(\Delta^*\). Suppose that   \(\psi\) is bounded above.  Then \(\psi\) extends as a subharmonic function on \(\Delta\). Moreover, the \((1,1)\)-form \(dd^c\psi\) is locally integrable on \(\Delta\) and the following inequality holds in the sense of currents:
\[[dd^c\psi]\leqslant dd^c[\psi].\]
\end{lemma}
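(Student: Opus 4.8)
The plan is to proceed in three steps: first extend $\psi$ across the puncture as a subharmonic function on all of $\Delta$; then check that the smooth nonnegative form $dd^c\psi$ on $\Delta^*$ remains locally integrable at the origin; and finally identify the defect between the two currents as a nonnegative multiple of the Dirac mass at $0$, which gives the stated inequality.

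\emph{Extension.} Since $\psi$ is bounded above, $\limsup_{z\to 0}\psi(z)$ is an element of $[-\infty,+\infty)$, and assigning this value to $\psi(0)$ produces a subharmonic function on $\Delta$. This is the classical removable-singularity statement for subharmonic functions across polar sets, an isolated point being polar. One concrete route: after subtracting a constant we may assume $\psi\leqslant 0$; for each $\ep>0$ the function $\psi+\ep\log|z|$ is subharmonic on $\Delta^*$, bounded above, and tends to $-\infty$ at $0$, hence extends to a subharmonic function on $\Delta$; letting $\ep\downarrow 0$ one obtains an increasing family of subharmonic functions that is locally bounded above, whose upper semicontinuous regularisation is subharmonic on $\Delta$ and coincides with $\psi$ on $\Delta^*$. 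From now on $\psi$ denotes this extension; in particular $\psi\in L^1_{\mathrm{loc}}(\Delta)$, the current $[\psi]$ is defined, and $\mu:=dd^c[\psi]$ is a positive Radon measure on $\Delta$.

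\emph{Local integrability.} On $\Delta^*$ the function $\psi$ is $\mathscr{C}^\infty$, so there the measure $\mu$ is represented by the smooth nonnegative $(1,1)$-form $dd^c\psi$. For all $0<\ep<r$ with $\{|z|<r\}\Subset\Delta$ one has $\int_{\ep<|z|<r}dd^c\psi=\mu(\{\ep<|z|<r\})\leqslant\mu(\{|z|<r\})<\infty$, and letting $\ep\downarrow 0$, monotone convergence shows that the nonnegative density of $dd^c\psi$ lies in $L^1_{\mathrm{loc}}(\Delta)$. Hence $dd^c\psi$ is locally integrable on $\Delta$, the current $[dd^c\psi]$ is a well-defined positive measure, and, being absolutely continuous with respect to Lebesgue measure, it carries no mass at the point $0$.

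\emph{The inequality.} The currents $dd^c[\psi]=\mu$ and $[dd^c\psi]$ are positive Radon measures on $\Delta$ that agree on $\Delta^*$ (both equal the smooth form $dd^c\psi$ there), so their difference $T:=dd^c[\psi]-[dd^c\psi]$ is a signed Radon measure supported on $\{0\}$, hence $T=c\,\delta_0$ with $c=T(\{0\})=\mu(\{0\})-0=\mu(\{0\})\geqslant 0$. Therefore $[dd^c\psi]=dd^c[\psi]-c\,\delta_0\leqslant dd^c[\psi]$, which is the claim (the coefficient $c$ is, up to normalisation, the Lelong number of $\psi$ at the origin). The only point that is not pure measure-theoretic bookkeeping is the removable-singularity extension in the first step; since the proof of the lemma is left to the reader in the text, that is precisely the step one should treat by citation rather than belabour.
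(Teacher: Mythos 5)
Your proof is correct. It differs in execution from the paper's (unpublished, commented-out) argument: there, after quoting the removable-singularity extension as standard, the inequality \([dd^c\psi]\leqslant dd^c[\psi]\) is obtained by testing against an increasing sequence of cut-offs \(0\leqslant\varphi_n\leqslant\varphi\) supported in \(\Delta^*\), using monotone convergence on the left-hand side and the positivity of \(dd^c[\psi]\) applied to \(\varphi-\varphi_n\) on the right, never identifying what the discrepancy actually is. You instead work directly with the two positive Radon measures: you observe they agree on \(\Delta^*\), that \([dd^c\psi]\) is absolutely continuous, and hence that the defect is exactly \(c\,\delta_0\) with \(c=dd^c[\psi](\{0\})\geqslant 0\). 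Your route buys a sharper statement (the difference is a single nonnegative point mass at the origin, essentially the Lelong number), and your derivation of local integrability of \(dd^c\psi\) from the finiteness of \(\mu\) on compact sets is the same mechanism the paper uses implicitly in its monotone-convergence step; you also supply the extension step in detail via the \(\psi+\ep\log|z|\) trick, which the paper leaves to the standard literature. The test-function argument of the paper is marginally softer in that it never needs the measure-theoretic decomposition of \(T=dd^c[\psi]-[dd^c\psi]\), but both arguments rest on the same two facts (subharmonic extension across the puncture and positivity of \(dd^c[\psi]\)), and yours is complete as written.
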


\begin{comment}
\begin{proof}
The first claim is a standard result in the theory of subharmonic functions. Let us prove the inequality on currents. Let \(\varphi\geqslant0\) be a \(\mathscr{C}^\infty\) function with compact support in \(\Delta\) . Let \((\varphi_n)_{n\in \N}\) be a sequence of \(\mathscr{C}^\infty\) function with compact support in \(\Delta^*\) such that \(0\leqslant \varphi_n\leqslant \varphi\) converging pointwise towards \(\varphi\) in \(\Delta^*\). By the monotone convergence theorem, one has 
\[[dd^c\psi](\varphi_n)\to[dd^c\psi](\varphi)\]
as \(n\to +\infty\). On the other hand, for every \(n\in \N\) one has
\[[dd^c\psi](\varphi_n)=dd^c[\psi](\varphi_n)\]
since \(\Supp(\varphi_n)\subset \Delta^*\) and \(\psi\) is \(\mathscr{C}^{\infty}\) on \(\Delta^*\). Moreover, since \(\psi\) is subharmonic and \(\varphi-\varphi_n\geqslant 0\), one has
\[dd^c[\psi](\varphi-\varphi_n)\geqslant 0.\]
Thus
\[dd^c[\psi](\varphi_n)\leqslant dd^c[\psi](\varphi),\]
for each \(n\), and letting \(n\to \infty\) thus yields
\[[dd^c\psi](\varphi)\leqslant dd^c[\psi](\varphi).\]
\end{proof}
\end{comment}
Applying this lemma on the pair \((B,\Sigma)\), one obtains the following.

\begin{lemma}\label{lem:RicciCurrents}
Let \(B\) be a Riemann surface with a reduced divisor \(\Sigma\). Let \(h\) be a pseudo-metric on \(B\setminus \Sigma\) and let \(\omega\) be the associated \((1,1)\)-form. Suppose that \(\omega\) has at most logarithmic singularities around every point of \(\Sigma\) and that \(\log \omega\) is locally integrable over \(B\). Suppose moreover that there exists a smooth \((1,1)\)-form \(\tilde\omega\) on \(B\) such that over \(B\setminus (\Sigma\cup \Sigma_h)\) one has \[\tilde{\omega}-\Ric \omega\geqslant 0.\]
Then:
\begin{enumerate}
\item Both currents \([\Ric \omega]\) and \(\Ric [\omega]\) are  well-defined,
\item And one has
\[[-\Ric \omega]\leqslant 2\pi [\Sigma]+(-\Ric[\omega]).\]
\end{enumerate}
\end{lemma}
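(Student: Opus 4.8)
The plan is to reduce Lemma~\ref{lem:RicciCurrents} to Lemma~\ref{lem:SH} applied in a local chart around each point of $\Sigma$, and to patch the resulting local inequalities into a global statement of currents. First I would work locally: fix $p\in\Sigma$ and a coordinate $z$ centered at $p$ on a disc $\Delta$ with $\Delta\cap\Sigma=\{p\}$, and write $\omega=\tfrac{i}{2}\lambda\,dz\wedge d\bar z$ on $\Delta^*\setminus\Sigma_h$. Since $\Sigma_h$ is discrete and $\log\omega$ is locally integrable, it suffices (by an elementary argument removing the isolated points of $\Sigma_h$, which have measure zero and across which $\log\lambda$ has a removable singularity for the current computation) to produce the inequality on $\Delta$. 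The key observation is that the hypothesis $\tilde\omega-\Ric\,\omega\geq 0$ on $\Delta^*\setminus\Sigma_h$ says exactly that, writing $\tilde\omega=\tfrac{i}{2}\mu\,dz\wedge d\bar z$ with $\mu$ smooth and positive up to a smooth factor, the function $\psi:=-\log\lambda+\phi$ is subharmonic on $\Delta^*\setminus\Sigma_h$ for a suitable smooth local potential $\phi$ of $\tilde\omega$ (i.e. $dd^c\phi=\tilde\omega$, which exists on a disc since $\tilde\omega$ is smooth). Indeed $dd^c\psi = -dd^c\log\lambda + \tilde\omega = \tfrac{1}{2\pi}\Ric\,\omega + \tilde\omega\geq 0$ once we track signs via the footnote convention $dd^c=\tfrac{i}{2\pi}\partial\bar\partial$ and $\Ric\,\omega=-2\pi dd^c\log\lambda$.

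Next I would verify that $\psi$ is bounded above on $\Delta^*$. This is where the logarithmic-singularity hypothesis enters: $\lambda\leq C/|z|^2$ gives $-\log\lambda\geq \log|z|^2-\log C$, which is a lower bound, not the upper bound we want. So instead I record that $\psi=-\log\lambda+\phi$ with $\phi$ bounded, and the content of "$\omega$ has at most a logarithmic singularity" combined with "$\log\omega$ is locally integrable" is used as follows: $\log\lambda$ being locally integrable and $\psi$ being subharmonic on $\Delta^*$ means $\psi$ is either bounded above near $p$ or tends to $+\infty$; if the latter, $-\log\lambda\to+\infty$, i.e. $\lambda\to 0$, which is compatible with the singularity bound, but then a standard fact about subharmonic functions with an isolated singularity shows $dd^c[\psi]$ picks up a nonpositive multiple of $[\{p\}]$, contradicting... — here I would actually reorganize and instead apply the correct form of Lemma~\ref{lem:SH}: the relevant bounded-above function is not $\psi$ itself but rather the comparison is made after subtracting a logarithmic pole. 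Concretely, the logarithmic-singularity bound $\lambda\le C/|z|^2$ gives $-\log\lambda - \log|z|^2 \ge -\log C$, so $\psi - \log|z|^2$ is bounded below; and subharmonicity of $\psi$ together with local integrability of $\log\lambda$ (hence of $\psi$) forces, via the sub-mean-value property, that $\psi-\log|z|^2$ is actually bounded above too on a smaller disc (its only possible unbounded behaviour near $p$ would be $\to-\infty$, excluded by the lower bound). Then Lemma~\ref{lem:SH} applies to $\tilde\psi:=\psi-\log|z|^2$ on $\Delta^*$: it extends subharmonically across $p$, $dd^c\tilde\psi$ is locally integrable, and $[dd^c\tilde\psi]\leq dd^c[\tilde\psi]$.

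From $[dd^c\tilde\psi]\le dd^c[\tilde\psi]$ I would now unwind. We have $dd^c\log|z|^2 = 0$ on $\Delta^*$ as a form but $dd^c[\log|z|^2]=[\{p\}]$ as a current (Poincaré–Lelong), so $[dd^c\tilde\psi]=[dd^c\psi]$ and $dd^c[\tilde\psi]=dd^c[\psi]-[\{p\}]$. Thus $[dd^c\psi]\le dd^c[\psi]-[\{p\}]$, i.e. $[dd^c\psi]+[\{p\}]\le dd^c[\psi]$. Substituting $dd^c\psi = \tfrac{1}{2\pi}\Ric\,\omega+\tilde\omega$ and $dd^c[\psi]=\tfrac{1}{2\pi}\Ric[\omega]+\tilde\omega$ (the smooth term $\tilde\omega$ is unaffected by passing to currents and cancels), multiplying through by $2\pi$, and rearranging, yields $[-\Ric\,\omega]\le 2\pi[\{p\}]+(-\Ric[\omega])$ on $\Delta$. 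Summing over all $p\in\Sigma$ and using that away from $\Sigma$ the forms $\Ric\,\omega$ and $\Ric[\omega]$ agree as currents (on the locus $B\setminus\Sigma$ one has only the isolated points of $\Sigma_h$ to worry about, handled identically with no $[\Sigma]$ contribution since there the metric is genuinely a pseudo-metric degenerating at an isolated point where $\log\omega$ is still locally integrable), I obtain the global inequality $[-\Ric\,\omega]\le 2\pi[\Sigma]+(-\Ric[\omega])$, which is (2); assertion (1), that both currents are well-defined, falls out of the local integrability of $\log\lambda$ (given) and of $\Ric\,\omega$ (which follows from the comparison $\Ric\,\omega\le\tilde\omega$ bounding it above by a smooth form together with $\Ric\,\omega=-2\pi dd^c\log\lambda$ and $\log\lambda\in L^1_{loc}$ bounding it below in the distributional sense).

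The main obstacle I anticipate is the bookkeeping at the isolated points of $\Sigma_h$ inside $B\setminus\Sigma$: one must check that no spurious $[\Sigma_h]$-term appears, i.e. that the extension of the subharmonic function $\psi$ across such a point contributes nothing to the right-hand side. The point is that at $x\in\Sigma_h\setminus\Sigma$ there is no prescribed order of degeneracy, but $\log\lambda$ is assumed locally integrable there, and $\psi$ is bounded above near $x$ (no logarithmic subtraction needed, since $\lambda$ need not blow up — it vanishes), so Lemma~\ref{lem:SH} applies directly with $\tilde\psi=\psi$ and gives $[dd^c\psi]\le dd^c[\psi]$ with \emph{no} Dirac mass, exactly as needed. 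A secondary subtlety is making precise the claim that a subharmonic function on $\Delta^*$ with locally integrable singularity and a logarithmic lower bound is bounded above near the puncture; this is standard (removable-singularity theory for subharmonic functions, e.g. via comparison with the harmonic function solving the Dirichlet problem on a small annulus), and I would cite it rather than reprove it.
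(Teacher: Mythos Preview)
There is a sign error at the very outset that invalidates the argument. You set $\psi:=-\log\lambda+\phi$ and compute $dd^c\psi=\tfrac{1}{2\pi}\Ric\omega+\tilde\omega$; but the hypothesis $\tilde\omega-\Ric\omega\geq0$ is an \emph{upper} bound on $\Ric\omega$, so it gives no positivity for $\tfrac{1}{2\pi}\Ric\omega+\tilde\omega$, and your $\psi$ is not subharmonic. The correct choice, which is what the paper does, is $\psi:=\log(|z|^2\lambda)+\alpha$ with $dd^c\alpha$ a suitable multiple of $\tilde\omega$: then $dd^c\psi=-\tfrac{1}{2\pi}\Ric\omega+\text{(smooth)}\geq0$, and the logarithmic-singularity bound $\lambda\leq C/|z|^2$ translates directly into $|z|^2\lambda\leq C$, i.e.\ $\psi$ is bounded above. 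Lemma~\ref{lem:SH} then applies immediately, with no need for your $\tilde\psi$-detour.

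Your attempt to repair the boundedness by passing to $\tilde\psi=\psi-\log|z|^2=-\log(|z|^2\lambda)+\phi$ cannot work: this $\tilde\psi$ is (minus) the paper's function and is super\-harmonic, not subharmonic, so Lemma~\ref{lem:SH} does not apply. Moreover, the claim that a subharmonic function on $\Delta^*$ which is locally integrable and bounded below must be bounded above near the puncture is false ($-\log|z|$ is a counterexample). Finally, your last rearrangement contains a compensating algebra slip: from $[dd^c\psi]+[\{p\}]\leq dd^c[\psi]$ and your substitutions one gets $[\Ric\omega]+2\pi[\{p\}]\leq\Ric[\omega]$, which rearranges to $[-\Ric\omega]\geq 2\pi[\{p\}]+(-\Ric[\omega])$, the opposite of what is claimed. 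Once the sign of $\log\lambda$ is corrected, all three difficulties (subharmonicity, boundedness above, direction of the final inequality) resolve simultaneously and the proof becomes exactly the one in the paper.
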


\begin{proof} The statement is local, so we restrict ourselves to a contractible neighborhood \(V\) of a point \(p\in B\). In such a neighborhood, let $z$ be a local coordinate that vanishes at $p$ and write \(\omega=\frac{i}{2}\lambda dz\wedge d\bar{z}\). The function \(\log\lambda\) is locally integrable by hypothesis. Moreover, the function \(\varphi:z\mapsto \log(|z|^2 \lambda (z))\) is bounded above, since by hypothesis on the singularities of \(\omega\) the function \(z\mapsto |z|^2\lambda(z)\) is bounded above. 

Applying the \(dd^c\) lemma, we can suppose that \(\tilde{\omega}=dd^c\alpha\) for some function \(\alpha\) of class \(\mathscr{C}^\infty\) . 
Applying  Lemma \ref{lem:SH} with $\psi=\varphi+\alpha$, we obtain that \(dd^c\psi\) is locally integrable, therefore so is \(dd^c\varphi=dd^c\log\lambda\) outside \(p\). This shows that \([\Ric\omega]\) is well-defined. Moreover, we obtain
\[[dd^c\alpha]+[dd^c\varphi]=[dd^c\psi]\leqslant dd^c[\psi] = dd^c[\alpha]+dd^c[\varphi].\]
Since \(\alpha\) is of class \(\mathscr{C}^\infty\), one has \([dd^c\alpha]=dd^c[\alpha]\), hence
\[[-\Ric \omega]=2\pi [dd^c\varphi]\leqslant 2\pi dd^c[\varphi].\] Moreover the Lelong-Poincaré lemma implies that on \(V\), one has
\[dd^c[\varphi]=dd^c[\log(|z|^2\lambda)]=dd^c[\log|z|^2]+dd^c[\log\lambda]=[\Sigma]-\frac{1}{2\pi}\Ric[\omega].\]
Altogether, we obtain the expected result.
\end{proof}

\subsection{Preliminaries on parabolic Riemann surfaces} Let us first introduce some terminology concerning parabolic Riemann surfaces. A detailed presentation of parabolic Riemann surfaces can for instance be found in \cite{N-S70}. General references for Nevanlinna theory for parabolic manifolds are for instance the work of Stoll \cite{Sto77,Sto83}, and more recently the article \cite{P-S14} by P\u{a}un and Sibony. We have followed closely this last article in our proof of the \textit{Second Main Theorem}, with some changes in the notations to make the analogy with the algebraic situation more transparent.

\begin{definition} 
A non-compact Riemann surface $B$ is called parabolic if it admits a parabolic exhaustion function, i.e. a continuous proper function $\sigma : B \rightarrow [0, + \infty)$ such that $ \log \sigma$ is harmonic outside a compact subset of $B$.
\end{definition}

If \((B,\sigma)\) is a non-compact Riemann surface equipped with a parabolic exhaustion function, then, for any \(r > 0\),  the open pseudo-ball of radius \(r\) and the pseudo-sphere of radius \(r\) will be denoted by 
\[B(r)  :=\{p\in B\ ;\ \sigma(p)<r\},\ \text{and} \ S(r) :=\{p\in B \ ; \ \sigma(p)=r\}\]
respectively. When \(r\) is a regular value of \(\sigma\) (which is the case for almost all \(r\)), the pseudo-sphere \(S(r)\) is smooth, and one considers on it the measure 
\[d\mu_r:=d^c\log\sigma|_{S(r)}.\] 
\begin{example}
\begin{enumerate}
\item The most standard example of parabolic Riemann surface is \(\C\), endowed with the parabolic exhaustion function \(\sigma(z)=|z|\). In this case \(B(t)\) is the disc of radius \(t\) centered at zero, and \(d\mu_r=\frac{d\theta}{4\pi}\), where \(\theta\) denotes the argument of \(z\).
\item If there exists a proper finite morphism \(\pi:B\to \C\), then \(B\) is a parabolic Riemann surface and a parabolic exhaustion function is given by \(\sigma(p)=|\pi(p)|\).
\item The previous example implies in particular that any affine Riemann surface is parabolic.
\end{enumerate}
\end{example}

One has the following version of the Jensen's formula:
\begin{proposition}[Jensen's formula, c.f. {\cite[Proposition 3.1]{P-S14}}]\label{Jensen formula}
Let $B$ be a non-compact parabolic Riemann surface equipped with a parabolic exhaustion function $\sigma$. Let \(\varphi:B\to [-\infty,+\infty)\) be a function which locally near every point of \(B\) can be written as the difference of two subharmonic functions. Then, for any \(r>1\) large enough, one has
\[\int_{1}^r\frac{dt}{t}\int_{B(t)}dd^c[\varphi]=\int_{S(r)}\varphi d\mu_r+O(1).\] 
\end{proposition}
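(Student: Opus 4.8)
The plan is to transplant to \((B,\sigma)\) the proof of the classical Jensen formula on \(\C\): there, for \(\varphi\) subharmonic and \(\sigma(z)=|z|\), one has \(\int_1^r\frac{dt}{t}\int_{B(t)}dd^c\varphi=\int_{S(r)}\varphi\,d\mu_r-\int_{S(1)}\varphi\,d\mu_1\), obtained by relating \(\int_{B(t)}dd^c\varphi\) to the derivative of the circular mean of \(\varphi\) and integrating in \(t\). The underlying mechanism, which survives on a general parabolic Riemann surface, is to pair the signed measure \(dd^c[\varphi]\) with a Green's-function-type potential attached to the pseudo-ball \(B(r)\).

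First I would introduce, for each \(r>1\), the Lipschitz compactly supported function \(g_r:=\min(\log\sigma,\log r)-\log r\), which equals \(\log\sigma-\log r\) on \(B(r)\) and vanishes outside. For every regular value \(r\) of \(\sigma\) I expect that, for every \(\mathscr C^\infty\) function \(\psi\) on \(B\),
\[\int_B\psi\,dd^c[g_r]=\int_{B(r)}\psi\,dd^c[\log\sigma]-\int_{S(r)}\psi\,d\mu_r,\]
the last term coming from the jump of the normal derivative of \(\min(\log\sigma,\log r)\) across the smooth curve \(S(r)\); this is precisely where \(d\mu_r=d^c\log\sigma|_{S(r)}\) enters, and where the regularity of \(r\) is used. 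Since \(\log\sigma\) is harmonic outside a fixed compact set \(K\), the measure \(\nu:=dd^c[\log\sigma]\) is supported in \(K\); hence for all \(r\) large enough \(dd^c[\log\sigma]|_{B(r)}=\nu\) is independent of \(r\), and \(\int_B\varphi\,d\nu\) is a bounded quantity.

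Next I would invoke the Stokes / integration-by-parts identity \(\int_B g_r\,dd^c[\varphi]=\int_B\varphi\,dd^c[g_r]\). By the displayed formula the right-hand side equals \(\int_B\varphi\,d\nu-\int_{S(r)}\varphi\,d\mu_r\). For the left-hand side, using \(\log\sigma(p)=\int_1^{\sigma(p)}\frac{ds}{s}\), Fubini, and the cancellation of the two resulting copies of \(\log r\cdot\int_{B(r)}dd^c[\varphi]\), one gets
\[\int_B g_r\,dd^c[\varphi]=\int_{B(r)}\log\sigma\,dd^c[\varphi]-\log r\int_{B(r)}dd^c[\varphi]=-\int_1^r\Big(\int_{B(s)}dd^c[\varphi]\Big)\frac{ds}{s}+O(1),\]
the \(O(1)\) absorbing the contribution of the relatively compact region \(\{\sigma<1\}\). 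Equating the two expressions for \(\int_B g_r\,dd^c[\varphi]\) and moving the bounded term \(\int_B\varphi\,d\nu\) across gives exactly \(\int_1^r\frac{dt}{t}\int_{B(t)}dd^c[\varphi]=\int_{S(r)}\varphi\,d\mu_r+O(1)\).

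The step I expect to be the main obstacle is the rigorous justification of the integration by parts: \(\varphi\) is merely a local difference of subharmonic functions, so \(dd^c[\varphi]\) is only a signed Radon measure and \(\varphi\in L^1_{\mathrm{loc}}\), while \(g_r\) is only Lipschitz with a corner along \(S(r)\). I would handle this by smoothing \(\varphi\) via local convolution and passing to the limit, verifying that the corner of \(g_r\) contributes exactly \(\int_{S(r)}\varphi\,d\mu_r\) — which is what forces \(r\) to be a regular value of \(\sigma\). This proves the identity for almost every large \(r\); the remaining values are recovered by a short limiting argument, using that the left-hand side is continuous in \(r\) and that \(r\mapsto\int_{S(r)}\varphi\,d\mu_r\) is well-behaved off the discrete critical set of \(\sigma\). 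A secondary, bookkeeping point is the finiteness of \(\int_B\varphi\,d\nu\) and of \(\int_{S(r)}\varphi\,d\mu_r\) for a.e. \(r\), which follows from \(\varphi\in L^1_{\mathrm{loc}}\) together with the co-area formula.
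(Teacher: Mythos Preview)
The paper does not give its own proof of this proposition; it is stated with a reference to \cite[Proposition 3.1]{P-S14} and used as a black box. Your sketch is essentially the standard proof found there (and, in the special case \(B=\C\), in any Nevanlinna theory text): pair \(dd^c[\varphi]\) with the Green-type potential \(g_r=\log^+(r/\sigma)\) for the pseudo-ball, compute \(dd^c[g_r]\) as the fixed compactly supported measure \(dd^c[\log\sigma]\) minus the boundary term \(d\mu_r\) on \(S(r)\), and unwind the left side by Fubini. So there is nothing substantive to compare.

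One small point worth tightening in your write-up: when you convert \(\int_{B(r)}(\log\sigma-\log r)\,dd^c[\varphi]\) into \(-\int_1^r\big(\int_{B(s)}dd^c[\varphi]\big)\frac{ds}{s}+O(1)\), the ``\(O(1)\)'' hides the term \(\int_{B(1)}\log\sigma\,dd^c[\varphi]\). Since \(\log\sigma\) may be \(-\infty\) on the (compact) set \(\{\sigma=0\}\), you should either note that \(\log\sigma\) is locally integrable against the signed Radon measure \(dd^c[\varphi]\) (which follows from the hypothesis on \(\varphi\) and the fact that \(\log\sigma\) is locally the difference of subharmonic functions near \(\{\sigma=0\}\)), or simply shift the exhaustion by a constant so that \(\sigma\ge 1\) on the compact where \(\log\sigma\) fails to be harmonic, which kills the issue and only changes the final formula by a bounded term.
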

One defines the \emph{weighted Euler characteristic} of \((B,\sigma)\) to be the function 
\[\mathfrak{X}_\sigma(r):=\int_1^r\chi(B(t))\frac{dt}{t},\]
where \(\chi(B(t))\) is the Euler characteristic of \(B(t)\). We will  only need the following result.
\begin{proposition}[Compare with {\cite[ Proposition 3.3]{P-S14}}]\label{weighted Euler characteristic}
Same notations as above. If \(\xi\in \Gamma(B,T_B)\) is any never vanishing holomorphic vector field on \(B\) (such a vector field exists since \(B\) is non-compact),   then one has
\[\mathfrak{X}_\sigma(r)=- \int_{S(r)}\log|d\sigma(\xi)|^2d\mu_r+O(\log r).\]
\end{proposition}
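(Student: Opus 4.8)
The plan is to express $\chi(B(t))$ via a Gauss–Bonnet/Poincaré–Hopf type argument using the never-vanishing holomorphic vector field $\xi$, and then integrate against $dt/t$. First I would recall that for a regular value $t$ of $\sigma$, $B(t)$ is a compact surface with smooth boundary $S(t)$, so by Gauss–Bonnet $\chi(B(t)) = \frac{1}{2\pi}\int_{B(t)} K_g\, dA_g + \frac{1}{2\pi}\int_{S(t)} \kappa_g\, ds$ for any choice of smooth metric $g$. Rather than fixing an auxiliary metric a priori, the slicker route is to use the flat (singular) metric $g_\xi$ for which $\xi$ has unit norm, i.e. the metric whose associated $(1,1)$-form is $\omega_\xi = \frac{i}{2}\,\frac{dz\wedge d\bar z}{|\,\xi\,|^2_{\text{coord}}}$ locally. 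Since $\xi$ is holomorphic and never vanishing, $g_\xi$ is a genuine smooth flat metric on all of $B$ (no curvature, no zeros), so the interior Gauss curvature term vanishes identically and Gauss–Bonnet reduces to $\chi(B(t)) = \frac{1}{2\pi}\int_{S(t)} \kappa_{g_\xi}\, ds$.

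Next I would compute the geodesic curvature boundary term. The boundary $S(t)=\{\sigma = t\}$ is a level set of $\sigma$, and $\log\sigma$ is harmonic near $S(t)$ (for $t$ large). The standard computation (this is exactly the content of \cite[Proposition 3.3]{P-S14}) identifies $\frac{1}{2\pi}\int_{S(t)}\kappa_{g_\xi}\,ds$ with an integral of $d^c\log(\text{norm})$ terms: writing everything in a local holomorphic coordinate, the geodesic curvature of $S(t)$ with respect to $g_\xi$ picks up a contribution from the curvature of the level sets of $\sigma$ — which is governed by $d^c\log\sigma$, hence by $d\mu_t$ after integration since $\log\sigma$ is harmonic — and a contribution from the conformal factor $|\xi|^{-2}$, which contributes $-d d^c \log|\xi|^2$-type terms. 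Concretely one gets
\[
\chi(B(t)) = -\int_{S(t)} d^c\big(\log |d\sigma(\xi)|^2\big) + (\text{boundary terms involving only }\log\sigma),
\]
where the $\log\sigma$-only part, after integrating in $t$ against $dt/t$ and using that $\log\sigma$ is harmonic outside a compact set together with Jensen's formula (Proposition \ref{Jensen formula}), contributes an $O(\log r)$ error. I would be careful here: $d\sigma(\xi)$ is the natural scalar combining the exhaustion and the vector field, $|d\sigma(\xi)|^2$ is exactly the ratio of the two relevant length scales on $S(t)$, and it is smooth and nonvanishing on $S(t)$ for regular $t$ large enough (one must check $d\sigma(\xi)\neq 0$ there, which holds because $\sigma$ has no critical points near $S(t)$ and $\xi$ never vanishes — though $d\sigma(\xi)$ could a priori vanish at isolated tangency points; these are handled by the residue/Stokes bookkeeping as in \textit{loc. cit.}).

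Then I would integrate: apply $\int_1^r (\cdot)\frac{dt}{t}$ to the displayed identity. The main term becomes $-\int_1^r \frac{dt}{t}\int_{S(t)} d^c(\log|d\sigma(\xi)|^2)$, and here I invoke Jensen's formula (Proposition \ref{Jensen formula}) applied to $\varphi = \log|d\sigma(\xi)|^2$ — which is locally a difference of subharmonic functions since $\log|d\sigma(\xi)|^2 = \log\sigma + \log|d\log\sigma(\xi)|^2$ and the second factor is, locally, $\log$ of the modulus of a holomorphic-ish expression combined with the harmonic $\log\sigma$. Jensen converts $\int_1^r\frac{dt}{t}\int_{S(t)} d^c\varphi$ essentially into $\int_{S(r)}\varphi\, d\mu_r + O(1)$, up to the subtlety that $d^c$ on a curve versus $dd^c$ on the region — one uses that $\int_{S(t)} d^c\varphi = \int_{B(t)} dd^c[\varphi] + (\text{Lelong contributions})$, Stokes on the annulus, so the double integral is handled directly by Proposition \ref{Jensen formula}. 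All remaining pieces are $O(\log r)$: the $\log\sigma$ boundary terms, and the error in Jensen's formula. Assembling, $\mathfrak{X}_\sigma(r) = -\int_{S(r)}\log|d\sigma(\xi)|^2\, d\mu_r + O(\log r)$, as claimed.

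The step I expect to be the main obstacle is the careful local computation of the geodesic curvature boundary term and its clean separation into the $\log|d\sigma(\xi)|^2$ piece plus a harmless $O(\log r)$ remainder — in particular tracking the $d^c$ versus $dd^c$ bookkeeping, the possible zeros of $d\sigma(\xi)$ on the pseudo-spheres, and verifying that $\varphi = \log|d\sigma(\xi)|^2$ is indeed locally a difference of subharmonic functions so that Jensen's formula (Proposition \ref{Jensen formula}) applies. This is precisely where the cited \cite[Proposition 3.3]{P-S14} does the analogous work, and I would follow that argument, adapting the normalization of $d^c$ and the notation $|d\sigma(\xi)|^2$ to match the conventions fixed earlier in this section.
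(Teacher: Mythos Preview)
Your strategy is a plausible alternative to the paper's, though packaged differently: you propose Gauss--Bonnet for the flat metric $g_\xi$ (so the interior curvature vanishes and the whole Euler characteristic is carried by the boundary geodesic curvature), whereas the paper applies Poincar\'e--Hopf to the real vector field associated to $v=\overline{\partial_\xi\sigma}\cdot\xi$, which is checked to point in the outward normal direction along $S(r)$. The paper then computes, for each zero $p$ of $v$ outside the compact region, that $\mathrm{index}_p(v_{\mathbb R})=-\mathrm{ord}_p(\partial_\xi\log\sigma)$; Lelong--Poincar\'e converts this sum of orders into $\int_{B(r)}dd^c[\log|\partial_\xi\log\sigma|^2]$ (up to a constant coming from the compact region), and Jensen's formula finishes. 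The passage from $\log|\partial_\xi\log\sigma|^2$ to $\log|\partial_\xi\sigma|^2=\log|d\sigma(\xi)|^2$ at the end uses only $\partial_\xi\sigma=\sigma\cdot\partial_\xi\log\sigma$ and the fact that $\int_{S(r)}d\mu_r$ is constant in $r$, which produces the $O(\log r)$ term.

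The genuine gap in your proposal is precisely the step you flag as the main obstacle: the identification of $\frac{1}{2\pi}\int_{S(t)}\kappa_{g_\xi}\,ds$ with $-\int_{S(t)}d^c\bigl(\log|d\sigma(\xi)|^2\bigr)$ plus harmless terms. You defer this to \cite[Proposition~3.3]{P-S14}, but neither that reference nor the present paper proceeds via geodesic curvature of a flat metric --- both go through Poincar\'e--Hopf and the index computation --- so the step is not established by citation, and it is the entire content of the proposition. If you insist on Gauss--Bonnet, you must actually compute that the total turning of the outward normal to $S(t)$ in the $g_\xi$-frame is $-\int_{S(t)}d\arg(\partial_\xi\sigma)$, and then rewrite this in terms of $d^c\log|\cdot|^2$; this is exactly where one uses that $\partial_\xi\log\sigma$ is \emph{holomorphic} outside the compact set (not merely ``holomorphic-ish''), via the decomposition $\log|\partial_\xi\sigma|^2=2\log\sigma+\log|\partial_\xi\log\sigma|^2$ (your version drops the factor $2$). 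Your treatment of Jensen is also slightly off: Proposition~\ref{Jensen formula} involves $\int_{B(t)}dd^c[\varphi]$, not $\int_{S(t)}d^c\varphi$, and the passage between them (Stokes plus Lelong contributions at the zeros of $\partial_\xi\log\sigma$) is again precisely the index bookkeeping the paper carries out. The cleanest fix is to switch to the Poincar\'e--Hopf route from the outset.
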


\begin{remark}
The statement we give here slightly differs from the formula given in \cite{P-S14} because the formula given in \textit{loc.cit.} contains some typo. For completeness and for the reader's convenience, we provide here a more detailed version of the proof of this formula given in \cite{P-S14}. We emphasize that we claim no originality here, and that we simply fill in the technical details of the proof outlined in  \cite{P-S14}. 
\end{remark}

\begin{proof}
Since \(\xi\) is of type \((1,0)\), one has \(d_{\xi}\sigma=\partial_{\xi}\sigma\). Since \(\xi\) is holomorphic and \(\log \sigma\) is harmonic outside the compact subset \(\overline{B}(r_0)\) for a well-choosen $r_0$, it follows that \(\partial_\xi\log  \sigma\) is holomorphic outside  \(\overline{B}(r_0)\). A direct computation shows that \(\sigma \cdot \partial_{\xi}\log\sigma=\partial_{\xi}\sigma\). Therefore, \(\partial_{\xi}\sigma\) vanishes only when \(\partial_{\xi}\log\sigma\) does, and outside \(\overline{B}(r_0)\) this happens only on a discrete subset (hence of measure zero). From now on, we take \(r > r_0\) outside this subset. 

Consider the vector field \(v=\overline{\partial_{\xi}\sigma}\cdot \xi\), or more precisely the real vector field \(v_{\mathbb{R}}\)  associated to it via the isomorphisms \(T^{1,0}\equiv T_{\R}\). This vector field has isolated singularities outside \(\overline{B}(r_0)\), and, along the boundary \(S(r)\), is pointing in the outward normal direction. To see this, we can work locally, with a holomorphic coordinate \(z\) such that \(\xi=\frac{\partial }{\partial z}\). In this coordinate, one has \(\overline{\partial_{\xi}\sigma}\cdot \xi=\overline{\frac{\partial \sigma}{\partial z}}\frac{\partial }{\partial z}=\frac{1}{2}\left(\frac{\partial \sigma}{\partial x}+i\frac{\partial \sigma}{\partial y}\right)\frac{\partial }{\partial z}\), therefore under the isomorphism \(T^{1,0}\equiv T_{\R}\), the corresponding real vector field \(v\) is given by \(\frac{\partial \sigma}{\partial x}\frac{\partial }{\partial x}+\frac{\partial \sigma}{\partial y}\frac{\partial }{\partial y}\).  In particular \(d_v\sigma=\left(\frac{\partial \sigma}{\partial x}\right)^2+\left(\frac{\partial \sigma}{\partial y}\right)^2\geqslant 0\), and is strictly positive on the boundary \(S(r)\) with our choice of \(r\). This shows that \(v\) is pointing outwards \(B(r)\), and moreover it is immediate that it is tangent to the normal direction. 

Let \(\tilde{v}\) be a \(\mathscr{C}^\infty\) vector field on \(B\) with isolated zeros  which coincides with \(v_{\mathbb{R}}\) outside \(\overline{B}(r_0)\). Applying the Poincaré-Hopf index theorem to the surface with boundary \(\overline{B}(r) \) and the vector field \(\tilde{v}\), we get that 
\[\chi(B(r))=\sum_{p\in B(r)}{\rm index}_p(\tilde{v})=\sum_{p\in \overline{B}(r_0)}{\rm index}_p(\tilde{v})+\sum_{p\in B(r)\setminus  \overline{B}(r_0)}{\rm index}_p(v_{\mathbb{R}})\]
We claim that for every \(p\in B(r)\setminus\overline{B}(r_0)\), \({\rm index}_pv_{\mathbb{R}}=-{\rm ord}_p\partial_{\xi}\log\sigma\). To see this, recall that if \(f\) is a holomorphic function, then \({\rm index}_0\left(f\frac{\partial }{\partial z}\right)_{\mathbb{R}}={\rm ord}_0f\), where \(\left(f\frac{\partial }{\partial z}\right)_{\mathbb{R}}={\rm Re}(f)\frac{\partial}{\partial x}+{\rm Im}(f)\frac{\partial}{\partial y}\) is the real vector field associated to \(f\frac{\partial}{\partial z}\). Therefore, in our situation, \({\rm index}_p(\partial_{\xi}\log\sigma\cdot \xi)_{\mathbb{R}}={\rm ord}_p\partial_{\xi}\log\sigma \). On the other hand, 
 \[{\rm index}_p(\partial_{\xi}\log\sigma\cdot \xi)_{\mathbb{R}}= {\rm index}_p\left(\frac{1}{\sigma}\partial_{\xi}\sigma\cdot \xi\right)_{\mathbb{R}}={\rm index}_p\left(\partial_{\xi}\sigma\cdot \xi\right)_{\mathbb{R}}=-{\rm index}_p\left(\overline{\partial_{\xi}\sigma}\cdot \xi\right)_{\mathbb{R}}=-{\rm index}_pv_{\mathbb{R}}.\]
Applying now the Lelong-Poincaré formula, we obtain 
 \[-\sum_{p\in B(r)\setminus \overline{B}(r_0)}{\rm index}_p(v_{\mathbb{R}})=\sum_{p\in B(r)\setminus \overline{B}(r_0)}{\rm ord}_{p}\partial_{\xi}\log\sigma=\int_{B(r)\setminus \overline{B}(r_0)}dd^c[\log|\partial_{\xi}\log\sigma|^2],\]
 and therefore 
 \begin{eqnarray*}
 -\chi(B(r))&=&\int_{B(r)\setminus \overline{B}(r_0)}dd^c[\log|\partial_{\xi}\log\sigma|^2]+ C_1\label{eq:EulerNegative}\end{eqnarray*}
 where 
\[ C_1=-\sum_{p\in \overline{B}(r_0)}{\rm index}_p(\tilde{v}).\]
Let \(\varphi\) be a \(\mathscr{C}^\infty\) function on \(B\) that coincides with \(\log|\partial_{\xi}\log \sigma|^2\) over \(B\setminus \overline{B}(r_0)\), so that 
\[\int_{B(r)\setminus \overline{B}(r_0)}dd^c[\log|\partial_{\xi}\log\sigma|^2]=\int_{B(r)}dd^c[\varphi]-\int_{\overline{B}(r_0)}dd^c[\varphi].\]
Therefore, setting \(C_2=-\int_{\overline{B}(r_0)}dd^c[\varphi]\) and \(C=C_1+C_2\), Jensen's formula implies
\begin{eqnarray*}-\int_1^r\chi(B(t))\frac{dt}{t}=\int_1^r\left(\int_{B(t)}dd^c[\varphi]\right)\frac{dt}{t}+C\log r= \int_{S(r)}\varphi d\mu_r +C\log r+O(1). \end{eqnarray*}
On the other hand, one has 
\begin{eqnarray*}
\int_{S(r)}\varphi d\mu_r&=& \int_{S(r)}\log|\partial_{\xi}\log\sigma|^2d\mu_r=\int_{S(r)}\log\left|\frac{\partial_{\xi}\sigma}{\sigma}\right|^2d\mu_r\\
&=&\int_{S(r)}\log\left|\partial_{\xi}\sigma\right|^2d\mu_r-\int_{S(r)}\log\left|\sigma\right|^2d\mu_r\\
 &=&\int_{S(r)}\log\left|\partial_{\xi}\sigma\right|^2d\mu_r+2\log r\int_{S(r)}d\mu_r =\int_{S(r)}\log\left|d_{\xi}\sigma\right|^2d\mu_r+C_3\log r.\end{eqnarray*}
In the last line we used the fact \(\int_{S(r)}d\mu_r \) is independant of \(r\) (as an application of Stokes formula).
It remains to put all this together to obtain the announced result.
\end{proof}

%%%%%%%%%%%%%%%%%%%%%%%%%%%%%%%%%%%%%%%%%
\subsection{Notations from Nevanlinna theory}\label{ssec:Nevanlinna}
 Parabolic Riemann surfaces provide a suitable framework for value distribution theory. In this section we provide a brief account on this theory and refer the reader to the works \cite{SNM66, Wu70, Sto77, Sto83} or \cite{P-S14} for a more detailed presentation. \\
 
Let $B$ be a non-compact parabolic Riemann surface equipped with a parabolic exhaustion function $\sigma$.  For any current \(\alpha\) of type \((1,1)\) on \(B\), we define the \emph{characteristic function of \(\alpha\)} to be the function  \(T_{\alpha}:[1,+\infty)\to \R\) defined by
\[T_\alpha(r)=\int_1^r\left(\int_{B(t)}\alpha\right)\frac{dt}{t}.\]
If \(\Sigma\) is a discrete set of points in \(B\), we denote by \([\Sigma]\) the associated integration current on \(B\), and set 
 \[N_{\Sigma}:=T_{[\Sigma]}.\] 
 Suppose we are given a smooth projective variety \(X\) and a non-constant holomorphic map \(f:B\to X\).
 Let \(D\) be a reduced effective divisor on \(X\) such that \(f(B)\not\subset D\). The \emph{truncated counting function of \(f\) with respect to \(D\)} is the function 
 \[N^{[1]}_{f,D}(r)=N_{f^{-1}(D)}(r).\]
 We emphasize that \(f^{-1}(D)\) is the set theoretic inverse image, and therefore, this function doesn't take into account the intersection multiplicities. 
  Let \(L\) be a line bundle on \(X\), endow \(L\) with a \(\mathscr{C}^{\infty}\) hermitian metric \(h\) and denote by \(C_1(L,h)\) the first Chern form of the hermitian bundle \((L,h)\). Recall that \(C_1(L,h)\) is the \((1,1)\)-form locally defined by \(-dd^c\log\|s\|_h^2\), where \(s\) is any nowhere-vanishing local holomorphic section of \(L\). The \emph{characteristic function of \(f\) with respect to \(L\)} is the function 
 \[T_{f,L}(r):=T_{f^*C_1(L,h)}(r).\]
 This function depends on the choice of the metric \(h\) only up to a bounded function. Indeed, if \(h_1, h_2\) are two \(\mathscr{C}^{\infty}\) metrics on \(L\), then there exists a \(\mathscr{C}^{\infty}\) function \(\varphi:X\to \R_+^*\) such that 
 \[C_1(L,h_1)-C_1(L,h_2)=dd^c\varphi. \]
 By compactness of \(X\), the function \(\varphi\) is bounded and Jensen's formula implies that the function \(T_{f^*dd^c\varphi}\) is also bounded. Similar results can be obtained if one allows metrics with mild singularities. If \((L,h_1)\) is a hermitian line bundle, a \emph{singular metric \(h_2\)} is of the form \(h_2=e^{-\varphi} h_1\) where \(\varphi:X\to \R\cup\{\pm\infty\}\) is a locally integrable function. In this situation, we define the  \emph{curvature current of the line bundle \(L\) endowed with the singular metric \(h_2\)} to be \[C_1(L,h_2):=C_1(L,h_1)+dd^c[\varphi].\]
   \begin{lemma}
 Let \(X\) be a smooth projective variety. Let \(L\) be a line bundle on \(X\) and let \(h_1\) be a smooth hermitian metric on \(L\). Let  \(h_2\) be a singular metric on \(L\).   If the curvature current \(C_1(L,h_2)\) is positive in the sense of current, then  for any non-compact parabolic Riemann surface equipped with a parabolic exhaustion function and any non-constant holomorphic map \(f:B\to X\), one has
  \[T_{f^*C_1(L,h_2)}(r)\leqslant T_{f^*C_1(L,h_1)}(r)+ O(1).\]
  \end{lemma}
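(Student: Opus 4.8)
The plan is to separate the contribution of the singular part of the metric. Write $h_2 = e^{-\varphi}h_1$; by definition $C_1(L,h_2) = C_1(L,h_1) + dd^c[\varphi]$, so by additivity of the characteristic function
\[ T_{f^\ast C_1(L,h_2)}(r) = T_{f^\ast C_1(L,h_1)}(r) + T_{f^\ast dd^c[\varphi]}(r), \]
and everything reduces to proving $T_{f^\ast dd^c[\varphi]}(r) \leqslant O(1)$.

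First I would unwind the positivity hypothesis. On a small enough open subset $U \subset X$ one can write $C_1(L,h_1) = dd^c u$ with $u \in \mathscr{C}^\infty(U)$; the assumption that $C_1(L,h_2) = dd^c(u + \varphi) \geqslant 0$ on $U$ says exactly that $u + \varphi$ is plurisubharmonic there (and genuinely so, i.e. not identically $-\infty$, since $\varphi$ is locally integrable). Thus $\varphi$ is, locally on $X$, the difference of a plurisubharmonic function and a smooth one; in particular $\varphi$ is upper semi-continuous, and since $X$ is compact we get $M := \sup_X \varphi < + \infty$.

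Next I would pull everything back along $f$. That $f^\ast C_1(L,h_2)$ is defined already presupposes that $f^\ast \varphi = \varphi \circ f$ is locally integrable on $B$, equivalently that $f(B)$ is not contained in the polar set of $\varphi$; granting this, for each local potential $u$ as above, $(u + \varphi) \circ f$ is subharmonic on the corresponding open subset of $B$ (it is the pull-back under the non-constant holomorphic map $f$ of the plurisubharmonic function $u+\varphi$, and is not identically $-\infty$), while $u \circ f$ is $\mathscr{C}^\infty$, so $\varphi \circ f = (u+\varphi)\circ f - u \circ f$ is locally a difference of two subharmonic functions on $B$. Jensen's formula (Proposition~\ref{Jensen formula}) applied to $\psi = \varphi \circ f$ then yields, for all large $r$,
\[ T_{f^\ast dd^c[\varphi]}(r) = \int_1^r \frac{dt}{t} \int_{B(t)} dd^c[\varphi \circ f] = \int_{S(r)} (\varphi \circ f)\, d\mu_r + O(1). \]
Since $d\mu_r$ is a positive measure, $\varphi \circ f \leqslant M$ on $B$, and $\int_{S(r)} d\mu_r$ is a constant independent of $r$ (by Stokes' formula), the right-hand side is $\leqslant O(1)$; together with the first display this gives the claim.

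The whole argument is short; the only point that needs care is checking that $\varphi \circ f$ belongs to the class of functions to which Jensen's formula applies — in other words, that positivity of the curvature current $C_1(L,h_2)$ genuinely forces $\varphi$ to be quasi-plurisubharmonic, hence bounded above on the compact $X$. The rest is just bookkeeping with the definitions of $T_{\alpha}$ and of $C_1(L,h_2)$.
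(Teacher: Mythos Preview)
Your proof is correct and follows essentially the same route as the paper: deduce from positivity of $C_1(L,h_2)$ that $\varphi$ is quasi-plurisubharmonic, hence bounded above on the compact $X$, then apply Jensen's formula to $\varphi\circ f$ to bound $T_{f^\ast dd^c[\varphi]}(r)$ by $\int_{S(r)}(\varphi\circ f)\,d\mu_r + O(1) \leqslant O(1)$. You have simply been more explicit than the paper about why $\varphi\circ f$ satisfies the hypotheses of Jensen's formula and why the sphere integral is bounded.
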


 \begin{proof}
Since the curvature current \(C_1(L,h_2)\) is positive in the sense of current, it follows that the function \(\varphi\) is quasi-plurisubharmonic. Recall that this means that locally, \(\varphi\) is the sum of a plurisubharmonic function and a smooth function. From this and the compactness of \(X\), it follows that \(\varphi\) is bounded above. One can now apply Jensen's formula to \(\varphi\) and we obtain
 \[T_{f^*C_1(L,h_2)}(r)-T_{f^*C_1(L,h_1)}(r)=\int_1^r\left(\int_{B(t)}dd^c\varphi\right)\frac{dt}{t}=\int_{S(r)}\varphi  d\mu_r+O(1)\leqslant O(1).\] 
 \end{proof}
 
 \begin{lemma}\label{lem:singular metric with log growth}  Let \(X\) be a smooth projective variety. Let \(L\) be a line bundle on \(X\) and let \(h_1\) be a smooth hermitian metric on \(L\). Let  \(h_2=e^{-\varphi}h_1\) be a singular metric on \(L\). Let \(D\) be an effective divisor on \(X\) and let \(s_D\in H^0(X,\mathscr{O}_X(D))\) be a global section such that \(D = \{s_D=0 \}\). Let \(\|\cdot\|_D\) be the norm associated to a hermitian metric on the line bundle \(\mathscr{O}_X(D)\). If locally on $X$ there exists \(\beta,C\in \mathbb{\R_+}\) such that 
 \[e^{-\varphi}\leqslant C \log(\|s_D\|_D^{-2\beta})\]
then for any ample line bundle  \(A\) on \(X\), any non-compact parabolic Riemann surface $B$ equipped with a parabolic exhaustion function and any non-constant holomorphic map \(f:B\to X\) such that \(f(B)\not\subset D\), one has
  \[T_{f^*C_1(L,h_1)}(r)\leqslant T_{f^*C_1(L,h_2)}(r)+O(\log( T_{f,A}(r))).\]
 \end{lemma}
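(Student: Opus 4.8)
The plan is to compare the two characteristic functions by Jensen's formula and then to estimate the boundary integral that appears, using the growth hypothesis together with the concavity of the logarithm.

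Since $h_2=e^{-\varphi}h_1$, we have $f^*C_1(L,h_2)-f^*C_1(L,h_1)=dd^c[f^*\varphi]$, so Jensen's formula (Proposition \ref{Jensen formula}) applied to $f^*\varphi$ (which locally is a difference of subharmonic functions --- this is the regularity that makes $T_{f^*C_1(L,h_2)}$ well-defined in the first place) gives
\[T_{f^*C_1(L,h_1)}(r)-T_{f^*C_1(L,h_2)}(r)=\int_{S(r)}(-f^*\varphi)\,d\mu_r+O(1),\]
so everything reduces to bounding $\int_{S(r)}(-f^*\varphi)\,d\mu_r$ from above by $O(\log T_{f,A}(r))$. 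After rescaling the hermitian metric on $\mathscr{O}_X(D)$ we may assume $v:=-\log\|s_D\|_D^2\ge 1$ on $X$; covering $X$ by finitely many charts, the local hypothesis then yields a global constant $C>0$ with $e^{-\varphi}\le C v$ on $X$, hence $-\varphi\le\log C+\log v$ away from $D$. Pulling back by $f$ and integrating over $S(r)$ for an admissible radius $r$ (a regular value of $\sigma$ with $S(r)\cap f^{-1}(D)=\emptyset$, which holds for almost every $r$), and writing $\varpi:=\int_{S(r)}d\mu_r>0$ (a constant independent of $r$), we get
\[\int_{S(r)}(-f^*\varphi)\,d\mu_r\le\int_{S(r)}\log(f^*v)\,d\mu_r+O(1).\]

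The key step is now the concavity of $\log$: since $\varpi^{-1}d\mu_r$ is a probability measure, Jensen's inequality gives $\int_{S(r)}\log(f^*v)\,d\mu_r\le\varpi\log\big(\varpi^{-1}\int_{S(r)}f^*v\,d\mu_r\big)$, so it remains to bound $\int_{S(r)}f^*v\,d\mu_r=\int_{S(r)}(-\log\|s_D\|_D^2)\circ f\,d\mu_r$ linearly in $T_{f,A}(r)$. For this I apply Jensen's formula to $f^*\log\|s_D\|_D^2$, which near any point is the sum of $\log|g\circ f|^2$ (with $g$ a local equation for $D$, so $g\circ f\not\equiv 0$ since $f(B)\not\subset D$) and a smooth function, hence a difference of subharmonic functions; together with the Lelong--Poincaré formula $dd^c[f^*\log\|s_D\|_D^2]=[\mathrm{div}(f^*s_D)]-f^*C_1(\mathscr{O}_X(D),\|\cdot\|_D)$ and the effectivity of $[\mathrm{div}(f^*s_D)]$ this gives $\int_{S(r)}f^*v\,d\mu_r\le T_{f,\mathscr{O}_X(D)}(r)+O(1)$. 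Finally, $A$ being ample, I pick $m\in\N$ such that $mA-D$ is linearly equivalent to an effective divisor $E$ with $f(B)\not\subset E$; the same Jensen--Lelong--Poincaré computation shows $T_{f,\mathscr{O}_X(E)}(r)\ge-O(1)$, and additivity of the characteristic function up to $O(1)$ yields $T_{f,\mathscr{O}_X(D)}(r)=m\,T_{f,A}(r)-T_{f,\mathscr{O}_X(E)}(r)+O(1)\le m\,T_{f,A}(r)+O(1)$. Combining the three displayed estimates and recalling $T_{f,A}(r)\to+\infty$ as $r\to+\infty$ (since $A$ is ample and $f$ non-constant), we conclude $T_{f^*C_1(L,h_1)}(r)-T_{f^*C_1(L,h_2)}(r)=O(\log T_{f,A}(r))$, which is what we want.

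The point that is not purely bookkeeping is the concavity step: the hypothesis controls $-\varphi$ only by $\log v$, i.e. by the logarithm of a quantity whose $S(r)$-average is of the order of a characteristic function, and a crude estimate would leave a $\log\log$-type term on the wrong side; it is precisely Jensen's inequality for the concave function $\log$ that turns ``the logarithm of something of order $T_{f,A}(r)$'' into the admissible error $O(\log T_{f,A}(r))$. The remaining ingredients --- Jensen's formula, Lelong--Poincaré, and the lower bound $T_{f,\mathscr{O}_X(E)}(r)\ge-O(1)$ for effective $E$ --- are standard and follow from the same computations used above.
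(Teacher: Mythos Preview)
Your proof is correct and follows essentially the same route as the paper's: Jensen's formula reduces the question to bounding $\int_{S(r)}(-f^*\varphi)\,d\mu_r$, the concavity of $\log$ turns this into the logarithm of $\int_{S(r)}f^*v\,d\mu_r$, and Lelong--Poincar\'e plus the First Main Theorem bound the latter by $T_{f,\mathscr{O}_X(D)}(r)+O(1)=O(T_{f,A}(r))$. You are actually a bit more careful than the paper in two places: you normalize $d\mu_r$ to a probability measure before invoking Jensen's inequality, and you spell out the comparison $T_{f,\mathscr{O}_X(D)}(r)\leqslant m\,T_{f,A}(r)+O(1)$ via an effective $E\in|mA-D|$ (for this, just take $m$ large enough that $|mA-D|$ is basepoint-free so that one can choose $E$ with $f(B)\not\subset E$).
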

 \begin{proof} Let \((U_i)_i\) be a finite set of open subsets of \(X\) on which the inequality of the hypothesis holds, let \(C_i,\beta_i\) be the associated  constants, and let \(C=\max\{C_i\}\) and \(\beta=\max\{\beta_i\}\). Let $B$ be a non-compact parabolic Riemann surface equipped with a parabolic exhaustion function and let \(f:B\to X\) be a non-constant holomorphic map such that \(f(B)\not\subset D\).
 The hypothesis implies that \(-\varphi\circ f\leqslant \log C+ \log(\beta)+\log (\log(\|s_D\circ f\|^{-2}_D))\). Applying Jensen's formula and the hypothesis one obtains
\begin{eqnarray*}
\int_1^r\left(\int_{B(t)}-f^*dd^c[\varphi]\right)\frac{dt}{t}&=&\int_{S(r)}\varphi\circ f d \mu_r+O(1)\leqslant \int_{S(r)}\log (\log(\|s_D\circ f\|^{-2}_D)) d\mu_r+O(1)\\
&\leqslant& \log\int_{S(r)}\log(\|s_D\circ f\|^{-2}_D)d\mu_r+O(1)=\log \left( T_{f^*dd^c[\log(\|s_D\|^{-2}_D)]}(r) \right)\\
&=&\log \big(-T_{[f^*D]}(r)+T_{f^*C_1(\OO_X(D),\|\cdot \|_D)}\big)\leqslant \log (T_{f,\OO_X(D)}(r))=O(\log T_{f,A}(r)).
\end{eqnarray*}
From this relation, the announced inequality follows immediately from the definition of the curvature of the singular metric \(h_2\).
 \end{proof}

Let us conclude this section by recalling some classical notations from Nevanlinna theory that shall be needed to state our results. Given two functions \(g,h : [1,+\infty) \to \R\) we write
 \[g(r)\leqslant_{\rm exc} h(r)\]
 if there exists a Borel subset \(E\subset [1,+\infty)\) of finite Lebesgue measure such that \(g(r)\leqslant h(r)\) for all \(r\in [1,+\infty)\setminus E\).
 Moreover, given \(s:[1,+\infty)\to \R\), we shall write \(g(t)\leqslant_{\rm exc}h(t)+O(s(t))\) if there exists \(C\in \R_+\) such that \(g(t)\leqslant_{\rm exc} h(t)+Cs(t)\). 
 
 This notation is motivated from the use of the Borel lemma  (see Lemma 1.2.1 in \cite{N-W14}) :
 \begin{lemma}[Borel's lemma] Let \(\varphi:[1,+\infty)\to \R\) a monotone increasing function. Then, for any \(\ep>0\),
 \[\varphi'(r)\leqslant_{\rm exc} \varphi(r)^{1+\ep}. \]
 \end{lemma}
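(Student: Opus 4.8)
The plan is to run the classical integration argument for growth lemmas. First I would set $\psi := \varphi^{-\ep}$; since $\varphi$ is monotone increasing one may assume $\varphi(1) > 0$ (otherwise the statement is vacuous), so that $\psi$ is a well-defined monotone \emph{decreasing} function on $[1,+\infty)$ taking values in $(0,\varphi(1)^{-\ep}]$. Being monotone, both $\varphi$ and $\psi$ are differentiable outside a set $N\subset[1,+\infty)$ of Lebesgue measure zero, and on $[1,+\infty)\setminus N$ one has $\psi'(r) = -\ep\,\varphi(r)^{-\ep-1}\varphi'(r)\leqslant 0$.

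Next I would introduce the Borel set
\[ E := N \cup \bigl\{\, r\in[1,+\infty)\setminus N \ ;\ \varphi'(r) > \varphi(r)^{1+\ep}\,\bigr\}, \]
and note that, by definition of $\leqslant_{\rm exc}$, it suffices to show that $E$ has finite Lebesgue measure. For $r\in E\setminus N$ we have $1 < \varphi'(r)\varphi(r)^{-1-\ep} = -\tfrac{1}{\ep}\psi'(r)$, whence
\[ |E| = \int_{E\setminus N}\! dr \;\leqslant\; \frac{1}{\ep}\int_{E\setminus N}\!\bigl(-\psi'(r)\bigr)\,dr \;\leqslant\; \frac{1}{\ep}\int_1^{+\infty}\!\bigl(-\psi'(r)\bigr)\,dr, \]
the last step using $-\psi'\geqslant 0$ on $[1,+\infty)\setminus N$.

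To finish, I would invoke the elementary fact that a monotone increasing function $g$ on an interval satisfies $\int_1^R g'(r)\,dr \leqslant g(R)-g(1)$: applied to $g=-\psi$ this gives $\int_1^R(-\psi'(r))\,dr \leqslant -\psi(R)+\psi(1)\leqslant\psi(1)=\varphi(1)^{-\ep}$ for every $R>1$, hence $\int_1^{+\infty}(-\psi'(r))\,dr\leqslant\varphi(1)^{-\ep}$ after letting $R\to+\infty$. Combined with the previous display this yields $|E|\leqslant\tfrac{1}{\ep}\varphi(1)^{-\ep}<+\infty$, as required. The one point that requires a little care — and the only place where the mere monotonicity of $\varphi$ (rather than, say, $\mathscr{C}^1$-regularity) enters — is that a monotone function need not be absolutely continuous, so one has only the inequality $\int_1^R(-\psi')\leqslant -\psi(R)+\psi(1)$ and not an equality; fortunately it goes in exactly the direction the argument needs.
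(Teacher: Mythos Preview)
The paper does not give its own proof of Borel's lemma; it merely cites it from [N-W14]. Your argument is correct and is precisely the classical one found in that reference: bound the measure of the exceptional set by $\frac{1}{\ep}\int_1^\infty \varphi'(r)\,\varphi(r)^{-1-\ep}\,dr$, recognize the integrand as the derivative of $-\frac{1}{\ep}\varphi^{-\ep}$, and use the monotone-function inequality $\int_1^R g' \leqslant g(R)-g(1)$ to conclude. Your care about the possible failure of absolute continuity is the only genuine subtlety and is handled correctly; in any case, in every application in the paper $\varphi$ arises as an indefinite integral and is therefore absolutely continuous anyway.

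One minor quibble: calling the case $\varphi(1)\leqslant 0$ ``vacuous'' is not quite accurate --- for $\varphi(r)<0$ and non-integer $\ep$ the quantity $\varphi(r)^{1+\ep}$ is simply undefined, so the statement tacitly assumes $\varphi>0$ (which indeed holds in all the uses made of it here).
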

 One of the main goal of Nevanlinna theory is to compare the characteristic functions and the counting functions for different line bundles and divisors. For instance, as a consequence of the \emph{First Main Theorem} in Nevanlinna theory, one obtains \emph{Nevanlinna's inequality}, which guarantees that, if \(X\) is a smooth projective variety and \(D\) is an effective divisor on \(X\), then, for any non-compact parabolic Riemann surface $B$ equipped with a parabolic exhaustion function $\sigma$ and for any non-constant holomorphic map \(f:B\to X\) such that 
 \(f(B)\not\subset D\), one has
 \[N_{f,D}(r)\leqslant T_{f,\OO_X(D)}(r)+O(1).\]

In an opposite direction, we say that a \emph{Second Main Theorem} holds for a pair \((X,D)\) and a line bundle \(L\) on \(X\) if there exists a constant \(\alpha\) and an ample line bundle \(A\) on \(X\) such that for any non-compact parabolic Riemann surface $B$ equipped with a parabolic exhaustion function $\sigma$ and any non-constant holomorphic map \(f:B\to X\) such that \(f(B)\not\subset D\), one has
\begin{equation*}T_{f,L}(r)\leqslant_{\rm exc}\alpha (N^{[1]}_{f,D}(r) - \mathfrak{X}_{\sigma}(r))+O(\log r+\log T_{f,A}(r)).\label{eq:SMTGeneralVersion}\end{equation*}
One easily checks that if this inequality holds for some ample line bundle \(A\), then it holds for any ample line bundle on \(X\). The term 
\(O(\log r+\log T_{f,A}(r))\) should be thought of as an error term.

\subsection{A version of the logarithmic derivative lemma}

\begin{lemma}\label{lem:RicciNegligeable}
Let $B$ be a non-compact parabolic Riemann surface equipped with a parabolic exhaustion function $\sigma$, and let \(\xi\in \Gamma(B,T_B)\) be a global trivialization for \(T_B\). Let \(\Sigma\) be a reduced divisor on \(B\). Let \(h\) be a pseudo-metric on \(B\setminus \Sigma\) such that the function \(\log \|\xi\|_h\) can be locally written as the difference of two subharmonic functions. Let \(\omega\) denotes the associated \((1,1)\)-form and suppose that \(\omega\) is locally integrable. Then \(\Ric[\omega]\) is well-defined and one has 
\begin{eqnarray*}T_{-\Ric[\omega]}(r)
& \leqslant_{\rm exc}& -2\pi \mathfrak{X}_{\sigma}(r)+O\left(\log r+ \log T_{[\omega]}(r)\right).\end{eqnarray*}
\end{lemma}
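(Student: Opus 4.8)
The inequality asserts that the characteristic function of the (negative) Ricci current of $\omega$ is controlled, up to an error term of the usual shape, by $-2\pi\mathfrak{X}_\sigma(r)$. The natural route is to combine Jensen's formula (Proposition~\ref{Jensen formula}) with the description of the weighted Euler characteristic coming from a global trivialization (Proposition~\ref{weighted Euler characteristic}). First I would fix the global nowhere-vanishing vector field $\xi$, write $\omega = \frac{i}{2}\lambda\, dz\wedge d\bar z$ in local coordinates, and introduce the globally defined function $u := \log\|\xi\|_h^2$; on the open set where $z$ is adapted so that $\xi = \partial/\partial z$ we have $u = \log\lambda$, so that $\Ric[\omega] = -2\pi\, dd^c[u]$ (this uses that $\log\|\xi\|_h$ is locally a difference of subharmonic functions, which is exactly the hypothesis guaranteeing $\Ric[\omega]$ is well-defined as a current). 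Thus $T_{-\Ric[\omega]}(r) = 2\pi\int_1^r\big(\int_{B(t)}dd^c[u]\big)\frac{dt}{t}$, and Jensen's formula rewrites this as $2\pi\int_{S(r)}u\,d\mu_r + O(1) = 2\pi\int_{S(r)}\log\|\xi\|^2_h\,d\mu_r + O(1)$.

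The second step is to bound $\int_{S(r)}\log\|\xi\|_h^2\,d\mu_r$ from above. Write $\log\|\xi\|_h^2 = \log|d\sigma(\xi)|^{-2} + \log\big(\|\xi\|_h^2\,|d\sigma(\xi)|^2\big)$. The first summand contributes, by Proposition~\ref{weighted Euler characteristic}, exactly $-\mathfrak{X}_\sigma(r) + O(\log r)$ after integration against $d\mu_r$. The second summand is the subtle one: the quantity $\|\xi\|_h^2\,|d\sigma(\xi)|^2$ is, up to the coordinate factors, comparable to $\lambda\cdot|d\sigma/dz|^2$, i.e. essentially $\|d\sigma\|_\omega^2$ pulled back — more precisely, along $S(r)$ we can relate $\int_{S(r)}\log(\cdots)\,d\mu_r$ to the derivative of a convex increasing function of $r$. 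Concretely, by the concavity of $\log$ and Jensen's inequality for the probability measure $d\mu_r$ (recall $\int_{S(r)}d\mu_r$ is independent of $r$, an application of Stokes), $\int_{S(r)}\log(\|\xi\|_h^2|d\sigma(\xi)|^2)\,d\mu_r \leqslant \log\int_{S(r)}\|\xi\|_h^2|d\sigma(\xi)|^2\,d\mu_r + O(1)$, and the integrand on the right is, after unwinding, $r^2$ times a coarea-type density whose integral over $B(r)$ is essentially $\int_{B(r)}\omega$; differentiating twice in $\log r$ and applying Borel's lemma twice (the calibration lemma) produces $\log$ of $\int_{B(r)}\omega$ and then $\log T_{[\omega]}(r)$, with the extra $\log r$ factors absorbed into the error term. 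This is the standard "lemma on logarithmic derivatives" mechanism in the Păun–Sibony formalism.

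Assembling: $T_{-\Ric[\omega]}(r) \leqslant_{\rm exc} -2\pi\mathfrak{X}_\sigma(r) + O(\log r + \log T_{[\omega]}(r))$, which is the claim.

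**Main obstacle.** The delicate point is the middle step: making rigorous the passage from $\int_{S(r)}\log(\|\xi\|_h^2|d\sigma(\xi)|^2)\,d\mu_r$ to $\log T_{[\omega]}(r)$. One must carefully set up an auxiliary increasing function (of the form $r\mapsto \int_{B(r)}\omega$ or its integral) whose growth controls the boundary integral after two applications of Jensen's formula / the coarea formula, and then invoke Borel's lemma twice to convert derivatives into the function plus an $\varepsilon$-loss absorbed in $O(\log r)$ — this is exactly where the "$\leqslant_{\rm exc}$" and the shape of the error term come from, and where the local integrability of $\omega$ is genuinely used so that all these integrals are finite. Everything else is bookkeeping with Jensen's formula and the two propositions already proved.
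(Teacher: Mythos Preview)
Your overall strategy is exactly the paper's: apply Jensen's formula to $u=\log\|\xi\|_h^2$, split off the piece that Proposition~\ref{weighted Euler characteristic} identifies with $-\mathfrak{X}_\sigma(r)$, and control the remainder via concavity of $\log$, a coarea/Fubini identity, and two applications of Borel's lemma.

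However, your decomposition carries a sign error that is not merely cosmetic. You write
\[
\log\|\xi\|_h^2=\log|d\sigma(\xi)|^{-2}+\log\bigl(\|\xi\|_h^2\,|d\sigma(\xi)|^2\bigr),
\]
and then claim the first summand integrates to $-\mathfrak{X}_\sigma(r)+O(\log r)$. But Proposition~\ref{weighted Euler characteristic} gives $\int_{S(r)}\log|d\sigma(\xi)|^{-2}\,d\mu_r=\mathfrak{X}_\sigma(r)+O(\log r)$, with the \emph{wrong} sign for your purposes. The correct split (and the one the paper uses) is
\[
\log\|\xi\|_h^2=\log|d_\xi\sigma|^{2}+\log\frac{\|\xi\|_h^2}{|d_\xi\sigma|^{2}},
\]
so that the first term gives $-\mathfrak{X}_\sigma(r)+O(\log r)$ and the residual term involves the \emph{quotient} $\|\xi\|_h^2/|d_\xi\sigma|^2$, not the product.

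This matters for the second step as well. The mechanism that links the boundary integral to $T_{[\omega]}$ is the pointwise identity
\[
\frac{\|\xi\|_h^2}{|d_\xi\sigma|^2}\,d\sigma\wedge d^c\sigma=\frac{1}{\pi}\,\omega,
\]
which is an elementary local-coordinate computation. With this in hand, $\int_{S(r)}\frac{\|\xi\|_h^2}{|d_\xi\sigma|^2}\,d\mu_r=\frac{1}{r}\frac{d}{dr}\int_{B(r)}\frac{1}{\pi}\omega$ (Fubini plus $d\mu_r=\frac{1}{r}d^c\sigma|_{S(r)}$), and then concavity of $\log$ followed by two Borel-lemma steps yields the $O(\log r+\log T_{[\omega]}(r))$ bound. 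Your product $\|\xi\|_h^2\,|d_\xi\sigma|^2$ does not satisfy any such identity (and it is not ``essentially $\|d\sigma\|_\omega^2$'': that dual norm is $|d_\xi\sigma|^2/\|\xi\|_h^2$, again the reciprocal), so as written your coarea step would not go through. Once you flip the sign in the decomposition and work with the quotient, your outline becomes precisely the paper's proof.
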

\begin{proof} Consider the function  
\[\varphi=\log(\|\xi\|^2_h)\]
on \(B\setminus \Sigma\), which by hypothesis is locally the difference of two subharmonic functions on \(B\).

 By definition, one has
\[\Ric[\omega]=-2\pi dd^c[\varphi].\]
By Jensen's formula (Proposition \ref{Jensen formula}), one has, for any large enough regular value \(r\) of \(\sigma\),
\[\frac{1}{2\pi}T_{-\Ric[\omega]}(r)=\int_1^{r}\left(\int_{B(t)}dd^c[\varphi]\right)\frac{dt}{t}=\int_{S(r)}\varphi d\mu_r+O(1)\]
 Therefore we are reduced to prove that
\[\int_{S(r)}\varphi d\mu_r= - \mathfrak{X}_\sigma(r)+O\left(\log r+ \log T_{[\omega]}(r)\right).\]
One the one hand, one has 
\begin{eqnarray*}\int_{S(r)}\varphi d\mu_r&=&\int_{S(r)}\log(\|\xi\|^2_h) d\mu_r=\int_{S(r)}\log\left(\frac{\|\xi\|^2_h|d_\xi(\sigma)|^2}{|d_\xi(\sigma)|^2}\right) d\mu_r\\
&=& \int_{S(r)}\log\left(\frac{\|\xi\|^2_h}{|d_\xi(\sigma)|^2}\right) d\mu_r+ \int_{S(r)}\log\left(|d_\xi(\sigma)|^2\right) d\mu_r
\end{eqnarray*}
On the other hand, it follows from Proposition \ref{weighted Euler characteristic} that 
\[ \int_{S(r)}\log\left(|d_\xi(\sigma)|^2\right) d\mu_r= - \mathfrak{X}_\sigma(r)+ O(\log(r)).\] 

It therefore remains to bound the other term. First observe that it follows from Fubini's theorem that for any smooth \(1\)-form \(\varphi\) on $B$, one has 
\begin{equation*}
\int_{B(r)}d\sigma\wedge \varphi=\int_{1}^r\left(\int_{S(t)}\varphi\right)dt.
\end{equation*}
Letting $\varphi = \frac{\|\xi\|^2_h}{|d_\xi(\sigma)|^2} d^c\sigma$ and differentiating with respect to $r$, we obtain that 
\[ \int_{S(r)}\frac{\|\xi\|^2_h}{|d_\xi(\sigma)|^2} d^c\sigma  = \frac{d}{dr}\int_{ B(r)}\frac{\|\xi\|^2_h}{|d_\xi(\sigma)|^2} d\sigma\wedge d^c\sigma.\]

Therefore, by the concavity of the logarithm, 
\begin{eqnarray*}
 \int_{S(r)}\log\left(\frac{\|\xi\|^2_h}{|d_\xi(\sigma)|^2}\right) d\mu_r&\leqslant & \log  \int_{S(r)}\frac{\|\xi\|^2_h}{|d_\xi(\sigma)|^2} d\mu_r= \log  \frac{1}{r}\int_{S(r)}\frac{\|\xi\|^2_h}{|d_\xi(\sigma)|^2} d^c\sigma\\
& =& \log  \frac{1}{r}\frac{d}{dr}\int_{ B(r)}\frac{\|\xi\|^2_h}{|d_\xi(\sigma)|^2} d\sigma\wedge d^c\sigma.
\end{eqnarray*}

On the other hand, one has
\[\frac{\|\xi\|^2}{|d_\xi(\sigma)|^2}d\sigma\wedge d^c\sigma= \frac{1}{\pi}\omega.\]
Indeed, if one takes a local coordinate \(z\) such that \(\xi=\frac{\partial}{\partial z}\) and we write \(\omega=\frac{i}{2}\lambda dz\wedge d\bar{z}\). One has \(\lambda=\|\xi\|^2_{h}\) and 
\(d_{\xi}\sigma=\frac{\partial \sigma}{\partial z}\). Moreover, since \(\sigma\) is real,
\[d\sigma\wedge d^c\sigma=\frac{i}{4\pi}\left(\frac{\partial \sigma}{\partial z}dz+\frac{\partial \sigma}{\partial \bar{z}}d\bar{z}\right)\left(\frac{\partial \sigma}{\partial \bar{z}}d\bar{z}-\frac{\partial \sigma}{\partial z}dz\right)=\frac{i}{2\pi}\frac{\partial \sigma}{\partial z}\frac{\partial \sigma}{\partial \bar{z}}dz\wedge d\bar{z}=\frac{i}{2\pi}\left|\frac{\partial \sigma}{\partial z}\right|^2dz\wedge d\bar{z}.\]
Hence
\[\frac{\|\xi\|^2}{|d_\xi(\sigma)|^2}d\sigma\wedge d^c\sigma= \frac{\lambda}{\left|\frac{\partial \sigma}{\partial z}\right|^2}\frac{i}{2\pi}\left|\frac{\partial \sigma}{\partial z}\right|^2dz\wedge d\bar{z}=\frac{i}{2\pi}\lambda dz\wedge d\bar{z}=\frac{1}{\pi}\omega.\]
By applying Borel's lemma twice, we obtain that for any \(\ep>0\), one has
\begin{eqnarray*}
  \frac{1}{\pi r}\frac{d}{dr}\int_{ B(r)}\omega&\leqslant_{{\rm exc} }& \frac{1}{\pi r}\left(\int_{ B(r)}\omega\right)^{1+\ep}=  \frac{r^\ep}{\pi}\left(\frac{1}{r}\int_{ B(r)}\omega\right)^{1+\ep}=\frac{r^\ep}{\pi}\left(\frac{d}{dr}T_{\omega}(r)\right)^{(1+\ep)}\\
  &\leqslant_{\rm exc}& \frac{r^\ep}{\pi}T_{\omega}(r)^{(1+\ep)^2}
\end{eqnarray*}
Altogether this implies that 
\[ \int_{S(r)}\log\left(\frac{\|\xi\|^2_h}{|d_\xi(\sigma)|^2}\right) d\mu_r \leqslant_{\rm exc} \log(\frac{r^\ep}{\pi}T_{\omega}(r)^{(1+\ep)^2})=\ep \log(r)+(1+\ep)^2\log T_{\omega}(r)-\log\pi,\]
which implies the result.
\end{proof}
Combining Lemma \ref{lem:RicciCurrents} and Lemma \ref{lem:RicciNegligeable} one obtains the following 
\begin{corollary}\label{cor:Tautological}
Let $B$ be a non-compact parabolic Riemann surface equipped with a parabolic exhaustion function $\sigma$, and let \(\xi\in \Gamma(B,T_B)\) be a global trivialization for \(T_B\). Let \(\Sigma\) be a reduced divisor on \(B\). Let \(h\) be a pseudo-metric on \(B\setminus \Sigma\) such that the function \(\log \|\xi\|_h\) can be locally written as the difference of two subharmonic functions and let \(\omega\) denotes the associated \((1,1)\)-form.  Suppose moreover that \(\omega\) has at most logarithmic singularities around points of \(\Sigma\). Then,
\begin{align*}T_{[-\Ric\omega]}(r)&\leqslant 2\pi N_\Sigma(r)+T_{-\Ric[\omega]}(r)+O(1)\\
&\leqslant_{{\rm exc}} 2\pi \big(N_\Sigma(r)- \mathfrak{X}_{\sigma}(r)\big)+O( \log r+\log T_{[\omega]}(r))\end{align*}
\end{corollary}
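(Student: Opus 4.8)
The plan is to simply assemble Corollary~\ref{cor:Tautological} from the two lemmas immediately preceding it, observing that the hypotheses have been arranged precisely so that both apply. First I would note that since $\log\|\xi\|_h$ is locally a difference of two subharmonic functions and $\omega$ has at most logarithmic singularities along the reduced divisor $\Sigma$, we are in position to invoke Lemma~\ref{lem:RicciCurrents}. The only point requiring a word of justification is the hypothesis of that lemma demanding a smooth $(1,1)$-form $\tilde\omega$ on $B$ with $\tilde\omega - \Ric\omega \geq 0$ over $B\setminus(\Sigma\cup\Sigma_h)$: here this is automatic because $B$ is an open Riemann surface, hence Stein, so every smooth $(1,1)$-form is $dd^c$-exact and in particular one may take $\tilde\omega = 0$ after a local reduction (the statement of Lemma~\ref{lem:RicciCurrents} is local, and on a contractible chart the needed $\tilde\omega$ exists trivially). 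Also, local integrability of $\log\omega$ — equivalently of $\log\lambda$ in each chart — follows from writing $\log\|\xi\|_h^2$ as a difference of subharmonic functions, since such functions are locally integrable.

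With Lemma~\ref{lem:RicciCurrents} in hand, both currents $[\Ric\omega]$ and $\Ric[\omega]$ are well-defined and we have the pointwise-on-currents inequality
\[
[-\Ric\omega] \leqslant 2\pi[\Sigma] + (-\Ric[\omega]).
\]
Next I would apply the functional $T_{(\cdot)}(r)$ — which is monotone with respect to the ordering of $(1,1)$-currents, because it is built from integrating over the pseudo-balls $B(t)$ and then integrating $dt/t$ — to deduce
\[
T_{[-\Ric\omega]}(r) \leqslant 2\pi\, T_{[\Sigma]}(r) + T_{-\Ric[\omega]}(r) + O(1) = 2\pi\, N_\Sigma(r) + T_{-\Ric[\omega]}(r) + O(1),
\]
where the $O(1)$ absorbs the additive ambiguity in the characteristic functions (and the $\log r$ discrepancy from the lower limit of integration being a regular value); this is the first displayed inequality of the corollary. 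For the second displayed inequality I would substitute the estimate of Lemma~\ref{lem:RicciNegligeable}, namely
\[
T_{-\Ric[\omega]}(r) \leqslant_{\rm exc} -2\pi\,\mathfrak{X}_\sigma(r) + O\!\left(\log r + \log T_{[\omega]}(r)\right),
\]
whose hypotheses (global trivialization $\xi$ of $T_B$, $\log\|\xi\|_h$ a local difference of subharmonic functions, $\omega$ locally integrable) are exactly those assumed in the corollary. Adding the $2\pi N_\Sigma(r)$ term and collecting error terms yields
\[
T_{[-\Ric\omega]}(r) \leqslant_{\rm exc} 2\pi\big(N_\Sigma(r) - \mathfrak{X}_\sigma(r)\big) + O\!\left(\log r + \log T_{[\omega]}(r)\right),
\]
as claimed.

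I do not expect any serious obstacle: the corollary is by design a mechanical concatenation of Lemmas~\ref{lem:RicciCurrents} and~\ref{lem:RicciNegligeable}. The only genuinely non-routine point to be careful about is the compatibility of hypotheses — in particular checking that ``$\log\|\xi\|_h$ is locally a difference of subharmonic functions'' together with the logarithmic-singularity assumption does imply both the local integrability of $\log\omega$ needed to form $\Ric[\omega]$ and the existence (locally) of the dominating form $\tilde\omega$ needed in Lemma~\ref{lem:RicciCurrents} — and verifying that $T_{(\cdot)}$ preserves inequalities between currents so that one may pass from the current inequality to the inequality between characteristic functions. Both are straightforward once spelled out, so the proof is essentially two lines of bookkeeping.
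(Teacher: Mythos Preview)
Your approach is exactly the paper's: it states the corollary as obtained by ``Combining Lemma~\ref{lem:RicciCurrents} and Lemma~\ref{lem:RicciNegligeable}'' with no further argument, and your write-up is the natural two-line concatenation (apply $T_{(\cdot)}$ to the current inequality from Lemma~\ref{lem:RicciCurrents}, then substitute the bound from Lemma~\ref{lem:RicciNegligeable}). The monotonicity of $T_{(\cdot)}$ and the local integrability of $\log\omega$ are handled correctly.

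One point of genuine confusion, though: your justification for the $\tilde\omega$ hypothesis of Lemma~\ref{lem:RicciCurrents} is not right. Stein-ness of $B$ and $dd^c$-exactness of smooth $(1,1)$-forms are irrelevant here; what the lemma needs is a \emph{local upper bound} on $\Ric\omega$ (equivalently, that the coefficient of $\Ric\omega$ in a chart stays below some smooth function as one approaches $\Sigma\cup\Sigma_h$), and that is not a consequence of $dd^c$-exactness nor of working on a contractible chart. Taking $\tilde\omega=0$ amounts to assuming $\Ric\omega\leqslant 0$, which is \emph{not} part of the corollary's hypotheses. In the paper's actual applications (Theorem~\ref{thm:SMTlocal} via Lemma~\ref{lem:HSC}) the Gaussian curvature is assumed $\leqslant -\gamma<0$, so $\Ric\omega=K\omega\leqslant 0$ and $\tilde\omega=0$ does work; but as the corollary is stated in full generality, this hypothesis requires either an additional argument from the ``difference of two subharmonic functions'' assumption or should be listed explicitly. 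The paper glosses over this as well, so your proof is on the same footing as the paper's---just be aware that the sentence about Stein manifolds does not do the work you want it to.
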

\begin{remark}
This result can be thought of as an intrinsic version of  \emph{McQuillan's tautological inequality}. Although it is not completely immediate, it is possible, with a suitable choice of metric, to derive from the previous corollary the usual version of the tautological inequality. Since this approach still requires some technical estimates and will not be further used in the present article,  we don't give the details here. We refer to \cite{Gas09} for a detailed presentation of the tautological inequality.
\end{remark}
\subsection{Second Main Theorem for metrics with negative holomorphic sectional curvature}
In this section, we observe that the hypothesis of Corollary \ref{cor:Tautological} are verified under an assumption on the Gaussian curvature of the pseudo-metric \(h\). 

\begin{lemma}\label{lem:HSC}
Let \(B\) be a non-compact Riemann surface endowed with a reduced divisor \(\Sigma\). Let \(h\) be a pseudo-metric on \(B\setminus \Sigma\) such that the holomorphic sectional curvature of \(h\) is bounded above by a negative constant \(-\gamma<0\) over \(B\setminus(\Sigma\cup\Sigma_h)\). Let \(\omega\) denote the \((1,1)\)-form associated to \(h\). Then:
\begin{enumerate}
\item The form \(\omega\) has at most a Poincaré singularity at the points of \(\Sigma\).
\item If \(\xi\in \Gamma(B,T_B)\) is a never-vanishing holomorphic vector field,  then \(\log\|\xi\|_h\) can locally be written  as the difference of two subharmonic functions.
\end{enumerate}
\end{lemma}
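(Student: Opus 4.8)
The starting point is the classical Ahlfors--Schwarz type computation relating holomorphic sectional curvature to the Gaussian curvature in dimension one: on a Riemann surface, the holomorphic sectional curvature of a pseudo-metric $h$ at a point where $h$ is non-degenerate coincides with its Gaussian curvature $K$, so the hypothesis says that $\Ric\omega = K\omega \le -\gamma\,\omega$ on $B\setminus(\Sigma\cup\Sigma_h)$, equivalently (writing $\omega=\frac{i}{2}\lambda\,dz\wedge d\bar z$ in a local coordinate) that $dd^c\log\lambda \ge \frac{\gamma}{2\pi}\,\omega \ge 0$ wherever $\lambda>0$, i.e.\ $\log\lambda$ is subharmonic away from $\Sigma\cup\Sigma_h$. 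This is the single inequality that drives both conclusions.

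For \textbf{(1)}, I would work in a punctured coordinate disc $\Delta^*$ around a point $p\in\Sigma$ and compare the given metric with a model Poincaré metric $\omega_P = \frac{i}{2}\,\frac{C_0}{|z|^2(\log|z|^2)^2}\,dz\wedge d\bar z$ on $\Delta^*$, whose Gaussian curvature is the constant $-1$ (after rescaling, $-\gamma$). The standard Ahlfors--Schwarz lemma argument — applied to the function $u = \log(\lambda/\lambda_P)$, which is subharmonic on $\Delta^*\setminus\Sigma_h$ because $dd^c\log\lambda\ge\frac{\gamma}{2\pi}\omega$ while $dd^c\log\lambda_P = \frac{\gamma}{2\pi}\omega_P$, together with the fact that $\lambda_P\to+\infty$ at the puncture so that $u$ cannot have an interior maximum near the boundary circle — gives $\lambda \le C\lambda_P$ near $p$, which is precisely the assertion that $\omega$ has at most a Poincaré singularity at $p$. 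A small technical point is the presence of the degeneracy locus $\Sigma_h$: since $\Sigma_h$ is discrete and $\log\lambda = -\infty$ there, $u$ is genuinely subharmonic across those points, so the maximum principle still applies.

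For \textbf{(2)}, the point is that $\|\xi\|_h^2 = \lambda$ in the coordinate in which $\xi = \partial/\partial z$ (and $\xi$ being a global nowhere-vanishing section of $T_B$, such a coordinate exists locally), so $\log\|\xi\|_h^2 = \log\lambda$, and we have just seen that $\log\lambda$ is subharmonic on the complement of the discrete set $\Sigma\cup\Sigma_h$ and is bounded above near $\Sigma$ by part (1) (a Poincaré singularity forces $\log\lambda \le \log C - \log(|z|^2(\log|z|^2)^2)$, which is locally bounded above near $p=0$, as $|z|^2(\log|z|^2)^2 \to 0$ gives $-\log$ of it $\to +\infty$ — hmm, wait: this tends to $+\infty$, so $\log\lambda$ is \emph{not} bounded above). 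I must be more careful: near $\Sigma$, $\log\lambda$ may blow up to $+\infty$, but only like $-\log|z|^2 - 2\log|\log|z|^2|$, which is a \emph{subharmonic} function plus a bounded term — indeed $-\log|z|^2$ is harmonic on $\Delta^*$ and subharmonic (in fact $= -[\,\Sigma\,]$-potential plus harmonic) — so $\log\lambda$ is dominated by a subharmonic function with the right singularity. The clean way to phrase it: $\log\lambda = \bigl(\log\lambda - \varphi_0\bigr) + \varphi_0$ where $\varphi_0$ is a fixed smooth-away-from-$\Sigma$ function with $dd^c\varphi_0 = -[\Sigma]$ locally and the right Poincaré-type growth, chosen so that $\log\lambda - \varphi_0$ is bounded above and subharmonic on $\Delta^*\setminus\Sigma_h$ hence (being bounded above near the discrete set $\Sigma\cup\Sigma_h$) extends subharmonically across; then $\log\|\xi\|_h^2 = (\text{subharmonic}) + \varphi_0 = (\text{subharmonic}) + (\text{subharmonic} - (\text{subharmonic}))$, exhibiting it locally as a difference of two subharmonic functions. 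The main obstacle is getting these singular-growth bookkeeping steps precise — in particular pinning down that after subtracting the explicit Poincaré/logarithmic model potential the remainder is genuinely bounded above (this is exactly where part (1) is used) — rather than anything conceptually deep.
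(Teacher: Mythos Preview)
Your approach is correct and matches the paper's. For part~(2), the paper streamlines your abstract $\varphi_0$ by taking simply $\varphi_0=-\log|z|^2$: the function $\varphi(z):=\log\bigl(|z|^2\lambda(z)\bigr)$ is subharmonic on the punctured disc (curvature hypothesis) and bounded above (by part~(1); in fact only the weaker logarithmic bound $|z|^2\lambda\le C$ is needed, so your phrase ``Poincar\'e-type growth'' for $\varphi_0$ is a slip), hence extends subharmonically across $p$, and then $\log\lambda=\varphi-\log|z|^2$ exhibits the desired decomposition.
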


\begin{proof}
The first claim is an immediate consequence of the Schwarz lemma in view of our hypothesis on the Gaussian curvature.
To prove the second assertion, one can restrict ourself to small enough neighborhoods of points of \(\Sigma\). So fix \(p\in \Sigma\) and take a small enough neighborhood \(W\) of \(p\) with a local holomorphic coordinate \(z\) centered at \(p\) such that \(\xi=\frac{\partial }{\partial z}\) and such that \(\Sigma\cap W=\{p\}\). Set \(\lambda(z)=\|\xi(z)\|^2\) for all \(z\in W\). Since $\omega$ has a Poincar\'e singularity at $p$, it follows a fortiori that \(z \mapsto  |z|^2\lambda(z)\) is bounded above around $p$. On the other hand, by our curvature assumption, one has \(-\Ric\omega\geqslant \gamma \omega\) on \(W\setminus\{p\}\). Therefore the function \(z \mapsto \log(|z|^2\lambda(z))\) is subharmonic on \(W\setminus \{p\}\), and thus extends as a subharmonic function \(\varphi\) on \(W\). It follows that \(z \mapsto \log\lambda(z)=\varphi(z) -\log|z|^2\) is indeed the difference of two subharmonic functions on \(W\).
\end{proof}

\begin{theorem}\label{thm:SMTlocal} Let $B$ be a non-compact parabolic Riemann surface equipped with a parabolic exhaustion function and a reduced divisor \(\Sigma \subset B\). Let \(h\) be a pseudo-metric on \(B\setminus \Sigma\) such that the Gaussian curvature of \(h\) is bounded above by a negative constant \(-\gamma<0\) over \(B\setminus(\Sigma\cup\Sigma_h)\). If \(\omega\) denotes the \((1,1)\)-form associated to \(h\), then \(\omega\) is locally integrable on \(B\) and one has
\[T_{[\omega]}(r)\leqslant_{{\rm exc}} \frac{2\pi}{\gamma}\big(N_\Sigma(r) -\mathfrak{X}_{\sigma}(r)\big)+O(\log r+\log T_{[\omega]}(r)).\]
\end{theorem}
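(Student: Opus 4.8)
The idea is to reduce everything to Corollary \ref{cor:Tautological} by exploiting the curvature hypothesis to convert the bound on $-\Ric[\omega]$ into a bound on $T_{[\omega]}(r)$ itself.

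First I would fix a never-vanishing holomorphic vector field $\xi \in \Gamma(B, T_B)$, which exists since $B$ is non-compact. By Lemma \ref{lem:HSC}, the curvature assumption $K \leq -\gamma < 0$ guarantees both that $\omega$ has at most a Poincaré (hence a fortiori logarithmic) singularity at the points of $\Sigma$, and that $\log \|\xi\|_h$ is locally the difference of two subharmonic functions. In particular, the local integrability of $\omega$ over $B$ follows from the Poincaré (hence $L^1_{\mathrm{loc}}$) bound on $\lambda$. Thus the hypotheses of Corollary \ref{cor:Tautological} are satisfied, and we obtain
\[
T_{[-\Ric\,\omega]}(r) \leqslant_{\mathrm{exc}} 2\pi\big(N_\Sigma(r) - \mathfrak{X}_{\sigma}(r)\big) + O(\log r + \log T_{[\omega]}(r)).
\]

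Next I would use the curvature inequality to relate the left-hand side to $T_{[\omega]}(r)$. By definition of the Gaussian curvature, $\Ric\,\omega = K\,\omega$ on $B \setminus (\Sigma \cup \Sigma_h)$, and the hypothesis $K \leq -\gamma$ gives $-\Ric\,\omega = -K\,\omega \geq \gamma\,\omega$ pointwise there. Since both sides are positive $(1,1)$-forms with at worst the controlled singularities of $\omega$ (which are $L^1_{\mathrm{loc}}$, and the sets $\Sigma$, $\Sigma_h$ are discrete hence of measure zero), this inequality passes to the level of currents: $[-\Ric\,\omega] \geq \gamma\,[\omega]$. Integrating over $B(t)$ and then against $\frac{dt}{t}$ preserves this inequality, so $T_{[-\Ric\,\omega]}(r) \geq \gamma\, T_{[\omega]}(r)$. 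Combining with the displayed estimate,
\[
\gamma\, T_{[\omega]}(r) \leqslant_{\mathrm{exc}} 2\pi\big(N_\Sigma(r) - \mathfrak{X}_{\sigma}(r)\big) + O(\log r + \log T_{[\omega]}(r)),
\]
and dividing by $\gamma$ yields the claimed inequality.

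The only genuinely delicate point is justifying that the pointwise curvature inequality $-\Ric\,\omega \geq \gamma\,\omega$, valid only away from the discrete set $\Sigma \cup \Sigma_h$, upgrades to the inequality of currents $[-\Ric\,\omega] \geq \gamma\,[\omega]$ on all of $B$; this is where the local integrability statements established via Lemma \ref{lem:HSC} (together with the fact that a discrete set carries no mass for an $L^1_{\mathrm{loc}}$ form) are essential — otherwise there could be a priori a singular contribution supported on $\Sigma \cup \Sigma_h$ on the left that is lost when restricting to the complement. Everything else is bookkeeping: unwinding the definitions of the characteristic functions and absorbing the error terms $O(\log r + \log T_{[\omega]}(r))$, which are of the form allowed in a Second Main Theorem.
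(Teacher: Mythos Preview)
Your proposal is correct and follows essentially the same route as the paper: invoke Lemma \ref{lem:HSC} to verify the hypotheses of Corollary \ref{cor:Tautological}, use the pointwise inequality $-\Ric\omega \geq \gamma\,\omega$ (valid off a discrete set, hence as currents since both sides are locally integrable) to get $\gamma\,T_{[\omega]}(r)\leq T_{[-\Ric\omega]}(r)$, and combine. The paper's own proof is a terser version of exactly these steps.
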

\begin{proof} Observe first that both currents \([\omega]\) and \([\Ric \omega]\) are well-defined in view of Lemma  \ref{lem:RicciCurrents} and Lemma \ref{lem:HSC}. By hypothesis, the Gaussian curvature \(K_h\) of \(h\) satisfies \(K_h\leqslant -\gamma\). Therefore, outside \(\Sigma_h\), one has
\[\Ric \omega=K_h\omega\leqslant -\gamma \omega\]
and thus, \(\gamma \omega\leqslant -\Ric\omega\).
At the level of currents,  it follows that 
\[[\omega]\leqslant \frac{1}{\gamma}[-\Ric\omega].\]
From this, one obtains
\[ T_{[\omega]}(r)\leqslant \frac{1}{\gamma}T_{[-\Ric\omega]}(r).\]
It suffices then to apply Corollary  \ref{cor:Tautological}.
\end{proof}

From this result one immediately infers a \textit{Second Main Theorem} for varieties having a metric with negative holomorphic sectional curvature. Let us first recall a definition. Let \(U\) be a complex manifold equipped with a Finsler pseudo-metric \(h\) of class \(\mathscr{C}^{\infty}\). Let $x \in U$ in the complementary of the degeneracy set of $h$. The holomorphic sectional curvature of \(h\) at \(x\) in a direction \(\xi\in T_{U,x}\setminus\{0\}\) is defined to be 
\[{\rm HSC}_{x,\xi}(h):=\sup K_{f^*h}(0)\]
where the supremum is taken over all holomorphic maps \(f:\Delta\to U\) such that \(f(0)=x\) and such that \(\xi\in \C\cdot f'(0)\). If \({\rm HSC}(\omega)\leqslant -\gamma\) for some \(\gamma>0\), we say that the holomorphic sectional curvature is bounded above by \(-\gamma\). In that case, if \(B\) is a Riemann surface and \(f:B\to U\) is a holomorphic map such that \(f^*h\) is a pseudo-metric on \(B\), then its Gaussian curvature is bounded above by \(-\gamma\). Therefore, we obtain the following

\begin{theorem}\label{thm:SMTglobal}
Let \(X\) be a smooth projective variety and let \(D\) be a reduced  divisor on \(X\). Suppose that \(X \setminus D\) is endowed with a Finsler pseudo-metric $h$ of class \(\mathscr{C}^{\infty}\) with holomorphic sectional curvature bounded above by \(-\gamma<0\). Assume that the degeneracy set of $h$ is contained in a nowhere-dense closed analytic subset of $X \setminus D$. Let $B$ be a non-compact parabolic Riemann surface equipped with a parabolic exhaustion function $\sigma$ and a non-constant holomorphic map \(f:B\to X\) whose image $f(B)$ is not contained in $D$ nor in the degeneracy set of $h$.  If \(\omega\) denotes the \((1,1)\)-form associated to the induced pseudo-metric $f^\ast h$ on \(B\setminus (f^{-1} (D))_{\rm red}\), then \(\omega\) is locally integrable on \(B\) and one has
\begin{equation*}
T_{[\omega]}(r)\leqslant_{{\rm exc}} \frac{2\pi}{\gamma}\big(N^{[1]}_{f,D}(r) -\mathfrak{X}_{\sigma}(r)\big)+O(\log r+\log T_{[\omega]}(r)).\label{eq:SMTglobal}
\end{equation*}
\end{theorem}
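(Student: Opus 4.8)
The plan is to deduce this global statement from the intrinsic \textit{Second Main Theorem} on the source curve, namely Theorem~\ref{thm:SMTlocal}, by pulling back the Finsler pseudo-metric along $f$. First I would produce the reduced divisor on $B$ to which Theorem~\ref{thm:SMTlocal} will be applied. Since $X$ is projective, $D$ is a divisor and $f(B)\not\subset D$, the set-theoretic preimage $\Sigma:=(f^{-1}(D))_{\rm red}$ is a proper analytic subset of the Riemann surface $B$, hence a reduced divisor (a discrete subset of $B$), and by the very definition of the counting functions one has $N_\Sigma(r)=N_{f^{-1}(D)}(r)=N^{[1]}_{f,D}(r)$. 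So it is enough to prove the inequality with $N_\Sigma$ in place of $N^{[1]}_{f,D}$.

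Next I would check that $f^\ast h$ is an admissible pseudo-metric on $B\setminus\Sigma$ in the sense required by Theorem~\ref{thm:SMTlocal}. On $B\setminus\Sigma$ the map $f$ takes values in $U:=X\setminus D$, so $\xi\mapsto h\big(df(\xi),df(\xi)\big)$ defines a $\mathscr{C}^\infty$ symmetric sesquilinear form on $T_{B\setminus\Sigma}$, i.e.\ a pseudo-metric, whose associated $(1,1)$-form is exactly the $\omega$ of the statement. Its degeneracy set $\Sigma_{f^\ast h}$ is contained in the union of the ramification locus $\{p\in B\setminus\Sigma: f'(p)=0\}$ and of $f^{-1}(Z)$, where $Z\subset U$ is a nowhere-dense closed analytic subset containing the degeneracy set $\Sigma_h$ of $h$ (such a $Z$ exists by hypothesis, and it may be chosen so that $f(B)\not\subset Z$ since $f(B)$ is not contained in the degeneracy set of $h$). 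As $f$ is non-constant the ramification locus is discrete, and as $f(B)\not\subset Z$ the set $f^{-1}(Z)$ is a proper analytic subset of $B$, hence discrete; therefore $\Sigma_{f^\ast h}$ is discrete. To transfer the curvature bound, fix $p\in B\setminus(\Sigma\cup\Sigma_{f^\ast h})$ and choose a holomorphic embedding $\iota\colon\Delta\to B$ with $\iota(0)=p$. Then $f\circ\iota\colon\Delta\to U$ is holomorphic with $(f\circ\iota)'(0)\neq0$ (because $p\notin\Sigma_{f^\ast h}$), so by the definition of the holomorphic sectional curvature recalled just before the statement,
\[
K_{f^\ast h}(p)=K_{(f\circ\iota)^\ast h}(0)\;\leqslant\;{\rm HSC}_{f(p),\,(f\circ\iota)'(0)}(h)\;\leqslant\;-\gamma .
\]
Hence the Gaussian curvature of $f^\ast h$ is bounded above by $-\gamma$ on $B\setminus(\Sigma\cup\Sigma_{f^\ast h})$.

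Finally I would invoke Theorem~\ref{thm:SMTlocal} for the data $(B,\sigma)$, the reduced divisor $\Sigma$ and the pseudo-metric $f^\ast h$: it yields that $\omega$ is locally integrable on $B$ and that
\[
T_{[\omega]}(r)\leqslant_{\rm exc}\frac{2\pi}{\gamma}\big(N_\Sigma(r)-\mathfrak{X}_{\sigma}(r)\big)+O(\log r+\log T_{[\omega]}(r)),
\]
which, after substituting $N_\Sigma(r)=N^{[1]}_{f,D}(r)$, is exactly the asserted inequality. The only content beyond Theorem~\ref{thm:SMTlocal} lies in the second step, namely the verification that $f^\ast h$ is a bona fide pseudo-metric with discrete degeneracy set and that the negative upper bound on the holomorphic sectional curvature of $h$ descends to an upper bound on the Gaussian curvature of $f^\ast h$ off the (discrete) bad set; this is precisely where the hypotheses that the degeneracy set of $h$ sits inside a nowhere-dense analytic subset and that $f(B)$ avoids it are used, and it is the step I expect to require the most care, although it involves no hard analysis.
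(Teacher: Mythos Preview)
Your proposal is correct and follows exactly the route the paper takes: the paper simply remarks, immediately before stating the theorem, that the upper bound on the holomorphic sectional curvature of $h$ implies the same bound on the Gaussian curvature of $f^*h$, and then infers the result from Theorem~\ref{thm:SMTlocal}. You have spelled out more carefully the verifications (discreteness of the degeneracy set of $f^*h$, identification $N_\Sigma=N^{[1]}_{f,D}$) that the paper leaves implicit, but there is no difference in strategy.
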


In the following formulation, the error term does not depend on \(\omega\).
\begin{corollary}
Same setting as in Theorem \ref{thm:SMTglobal}. Then, for any \(\ep >0\), one has 
\[(1-\ep)T_{[\omega]}(r)\leqslant_{{\rm exc}} \frac{2\pi}{\gamma}\big(N_{f,D}^{[1]}(r)-\mathfrak{X}_{\sigma}(r)\big)+O(\log r).\]
\end{corollary}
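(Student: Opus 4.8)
The plan is to deduce this corollary from Theorem \ref{thm:SMTglobal} by absorbing the offending error term $\log T_{[\omega]}(r)$ into the main term at the cost of shrinking the leading coefficient. First I would record the inequality provided by Theorem \ref{thm:SMTglobal}: there exists $C>0$ with
\[
T_{[\omega]}(r)\leqslant_{{\rm exc}} \frac{2\pi}{\gamma}\big(N^{[1]}_{f,D}(r)-\mathfrak{X}_\sigma(r)\big)+C\big(\log r+\log T_{[\omega]}(r)\big),
\]
and I would abbreviate the right-hand ``geometric'' quantity $\frac{2\pi}{\gamma}\big(N^{[1]}_{f,D}(r)-\mathfrak{X}_\sigma(r)\big)$ by $G(r)$, so that $T_{[\omega]}(r)\leqslant_{{\rm exc}} G(r)+C\log r+C\log T_{[\omega]}(r)$.

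The key elementary step is the standard observation that for any $\delta>0$ one has $\log x\leqslant \delta x+O_\delta(1)$ for $x\geqslant 1$ (indeed $\log x - \delta x\to -\infty$, so it is bounded above); one must first note that $T_{[\omega]}(r)\geqslant 1$ for $r$ large (since $[\omega]$ is a nonzero effective current, being the pull-back of a pseudo-metric that is nondegenerate on a dense open set met by $f$, its characteristic function tends to $+\infty$), so the inequality applies with $x=T_{[\omega]}(r)$. Choosing $\delta=\ep/C$ gives $C\log T_{[\omega]}(r)\leqslant \ep\, T_{[\omega]}(r)+O_\ep(1)$. Substituting into the displayed inequality and moving the $\ep\,T_{[\omega]}(r)$ term to the left yields
\[
(1-\ep)\,T_{[\omega]}(r)\leqslant_{{\rm exc}} G(r)+C\log r+O_\ep(1),
\]
and since $G(r)+O_\ep(1)=\frac{2\pi}{\gamma}\big(N_{f,D}^{[1]}(r)-\mathfrak{X}_\sigma(r)\big)+O(\log r)$ (the constant is harmlessly absorbed into the $O(\log r)$ term for $r\geqslant 1$, using $N^{[1]}_{f,D}(r) = N_{f,D}^{[1]}(r)$), this is exactly the claimed inequality.

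There is essentially no hard part here; the only point requiring a line of justification is that $T_{[\omega]}(r)\to+\infty$, so that it is eventually $\geqslant 1$ and the inequality $\log x\leqslant \delta x + O_\delta(1)$ can be invoked — and even without that, one can simply split into the two cases $T_{[\omega]}(r)\leqslant 1$ (where the conclusion is trivial up to additive constants since the right-hand side dominates) and $T_{[\omega]}(r)>1$ (where the above argument applies verbatim). I would present the proof in three short sentences: invoke Theorem \ref{thm:SMTglobal}, apply $\log x\leqslant \tfrac{\ep}{C}x+O(1)$ to $x=T_{[\omega]}(r)$, and rearrange.
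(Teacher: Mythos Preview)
Your proof is correct and follows essentially the same approach as the paper: invoke Theorem \ref{thm:SMTglobal} and absorb the $C\log T_{[\omega]}(r)$ term into $\ep\, T_{[\omega]}(r)$ using the elementary fact that $\log x = o(x)$ (the paper phrases this as ``$T_{[\omega]}$ is unbounded and increasing, hence $C\log T_{[\omega]}(r)\leqslant_{\rm exc}\ep T_{[\omega]}(r)$''). Your extra care about $T_{[\omega]}(r)\geqslant 1$ is fine but unnecessary, since once $T_{[\omega]}$ is unbounded the inequality holds for all large $r$ anyway.
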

\begin{proof}
Just observe that the function \(T_{[\omega]}\) is unbounded and increasing, hence for any \(C, \ep>0\) we have
\[C \log T_{[\omega]}(r)\leqslant_{\rm exc} \ep T_{[\omega]}(r).\]
\end{proof}

%%%%%%%%%%%%%%%%%%%%%%%%%%%%%%%%%%%%%%%%%%%%%%%%%%%%%%%
\section{Recollection in Hodge theory}\label{Recollection in Hodge theory}\label{sec:Hodge}

\subsection{Complex polarized Hodge structure}
A complex polarized Hodge structure (of weight zero) on a finite-dimensional complex vector space $V$ is the data of a non-degenerate hermitian form $h$ on $V$ (the polarization) and of a decomposition $V = \bigoplus_{p \in \bZ} {V}^p$ (the Hodge decomposition) which is orthogonal for $h$ and such that the restriction of $h$ to $ {V}^p$ is positive definite for $p$ even and negative definite for $p$ odd. The $r_p := \dim V^p$ are called the Hodge numbers. The Hodge metric on $V$ is the positive-definite hermitian metric $h_H$ obtained from $h$ by imposing that the Hodge decomposition $V = \bigoplus_{p \in \bZ} {V}^p$ is $h_H$-orthogonal and setting $h_H := (-1)^p \cdot h$ on $V^p$.\\

The Hodge filtration is the decreasing filtration $\{F^{\sbt}\}$ on $V$ defined by $F^p := \bigoplus_{q \geq p} V^q$. The Hodge decomposition is determined by the Hodge filtration thanks to the formula $V^p = F^p \cap (F^{p+1})^\perp$. Here $(F^{p+1})^\perp$ denotes the orthogonal with respect to the polarization $h$, which is clearly equal to the orthogonal of $F^{p+1}$ with respect to the Hodge metric $h_H$.\\

If $[a, b]$ is the smallest interval such that $V^p = 0$ for $p \notin [a, b]$, then the length of $V$ is by definition the integer $w= b - a$. Note that by shifting the numbering of the Hodge decomposition, one can always assume that $a = 0$ and $b = w$.\\

The category of complex polarized Hodge structures is Abelian, semisimple and admits tensor products and internal Hom. In particular, $\End(V)$ inherits a complex polarized Hodge structure with decomposition $\End(V) = \bigoplus_{p \in \bZ} {\End(V)}^p$ where $\End(V)^p = \bigoplus_{s - r = p } \Hom(V^r, V^s)$.\\

We often denote abusively by the same symbols $h$ or $h_H$ the metric that they induce on any object obtained from $V$ by a tensorial construction. 
%%%%%
\subsection{Variation of Hodge structures} 
A variation of complex polarized Hodge structures $\bV$ on a (reduced) complex analytic space $S$ consists in a complex local system $\cL$ on $S$ equipped with a non-degenerate hermitian form $h : \cL \otimes_{\bC} \bar \cL \rightarrow \underline{\bC}_S$ and a locally split finite filtration $\cF^{\sbt}$ of $\cV := \cL \otimes_\bC \cO_S$ by analytic coherent subsheaves such that 

\begin{itemize}
\item for every $s \in S$, the triple $(\cL_s, \cF^{\sbt}_s, h_s)$ defines a complex polarized Hodge structure;
\item (Griffiths' transversality) letting $\nabla := id \otimes d$, we have $\nabla( \cF^p) \subset \cF^{p-1} \otimes_{\cO_S} \Omega_S^1$ for all $p$.
\end{itemize}  

Note that when $S$ is smooth, the data of the complex local system $\cL$ is equivalent to the data of the holomorphic vector bundle $\cV := \cL \otimes_\bC \cO_S$ equipped with the integrable connection $\nabla := id \otimes d$.

The length of the complex polarized Hodge structure on the complex vector space $\mathcal{L}_s$ is independent of the point $s \in S$ and is called the length of $\bV$.

\subsection{Example} If $f : X \to S$ is a smooth projective holomorphic map, then the choice of a relatively ample line bundle on $X$ endows canonically the complex local system $R^k f_\ast \underline{\mathbb{C}}_X $ with a structure of variation of complex polarized Hodge structures for every $k \geq 0$.

\subsection{Associated system of Hodge bundles}
Starting with a variation of complex polarized Hodge structures $(\mathcal{V}, \nabla, \cF^{\sbt}, h)$ on a complex manifold $S$, we define $\cE^p := \cF^p / \cF^{p+1}$ and $\cE := \bigoplus_p \cE^p$. It follows from Griffiths transversality that $\nabla$ induces an $\cO_S$-linear morphism  $ \cE^p \rightarrow  \Omega^1_S \otimes_{\cO_S} \cE^{p-1}$ for every $p$. We denote by $\phi_p$ the corresponding element of $ \Omega^1_S \otimes_{\cO_S} \mathcal{H}om(\cE^p , \cE^{p-1})$ and set $\phi := \oplus_p \phi_p \in \Omega^1_S (\mathcal{E}nd(\cE))$. The pair $(\cE, \phi)$ together with the decompositions $\cE := \bigoplus_p \cE^p$ and $\phi := \oplus_p \phi_p $ is called the system of Hodge bundles associated to the variation of complex polarized Hodge structures $(\mathcal{V}, \nabla, \cF^{\sbt}, h)$. In what follows, we will often consider the so-called Higgs field $\phi \in  \Omega^1_S (\mathcal{E}nd(\cE))$ as an $\cO_S$-linear morphism $ \mathcal{T}_S \rightarrow \mathcal{E}nd(\cE)$ that we will abusively denote by the same symbol, and similarly for the $\phi_p$'s.\\

The holomorphic vector bundle $\cE$ comes equipped with the positive-definite hermitian metric $h_H$ that we call the Hodge metric. The decomposition $\cE := \bigoplus_p \cE^p$ is orthogonal for $h_H$.
The curvature of $(\cE, h_H)$ has been computed by Griffiths (the real structure plays no role in the computations):

\begin{proposition}[Griffiths, cf. {\cite[Theorem 6.2]{GriffithsIII}}. See also {\cite[Lemma 7.18]{Schmid}}]\label{curvature computation}
The curvature form $\Theta$ of $(\cE, h_H)$ satisfies
\[  h_{H}(\Theta_{(\cE, h_H)}(X, \bar Y) e, f) = h_{H}( \phi(X)e, \phi(Y) f)  - h_{H}(\phi^\ast(\bar Y) e, \phi^\ast (\bar X) f)  \]
for any vector fields $X$ and $Y$ of type $(1,0)$ and any sections $e$ and $f$ of $\cE$.
\end{proposition}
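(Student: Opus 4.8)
The plan is to put the flat connection $\nabla$ into a normal form adapted to the Hodge grading and then read the curvature of $(\cE, h_H)$ off the flatness relation $\nabla^2 = 0$. The statement is local and $C^\infty$ in nature, so I would work over a coordinate chart, where the real structure (if any) plays no role.

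First I would replace the graded bundle $\cE$ by a $C^\infty$ model inside $\cV$: set $\cV^p := \cF^p \cap (\cF^{p+1})^\perp$, the orthogonal being taken for $h$ (equivalently for $h_H$). Then $\cV = \bigoplus_p \cV^p$ is an orthogonal $C^\infty$ splitting, the natural $C^\infty$ isomorphism $\cV^p \cong \cE^p = \cF^p / \cF^{p+1}$ carries $h_H|_{\cV^p}$ to $(-1)^p h|_{\cV^p}$, and, since $\cF^p$ is a holomorphic subbundle and $\nabla^{0,1} = \bar\partial_\cV$, the Dolbeault operator of the quotient holomorphic structure on $\cE^p$ corresponds to the $\cV^p$-component of $\nabla^{0,1}|_{\cV^p}$. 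Writing $\nabla = D + \beta$ for the decomposition into the block-diagonal connection $D = \bigoplus_p D_p$ and the off-diagonal part $\beta$, an $\End(\cE)$-valued $1$-form, the goal of the main step is to show that $D$ is the Chern connection of $(\cE, h_H)$ and that $\beta = \phi + \phi^\ast$, with $\phi$ the Higgs field and $\phi^\ast$ its $h_H$-adjoint, a $(0,1)$-form raising the grading by one.

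For that main step I would argue as follows. Griffiths transversality gives $\nabla^{1,0}\cV^p \subset \cF^{p-1} \otimes \Omega^{1,0}_S$, so $\beta^{1,0}$ can only lower the grading and by at most one step; holomorphicity of $\cF^p$ gives $\nabla^{0,1}\cV^p \subset \cF^p \otimes \Omega^{0,1}_S$, so $\beta^{0,1}$ can only raise the grading. To rule out the remaining possibilities I would use that the polarization $h$ is $\nabla$-flat: for local sections $u$ of $\cV^p$ and $v$ of $\cV^q$ with $p \ne q$, orthogonality of the $\cV^\bullet$ gives $0 = d\,h(u,v) = h(\nabla u, v) + h(u, \nabla v)$, and separating this identity of $1$-forms by bidegree forces that $\beta^{1,0}$ has no grading-raising part (hence equals the Higgs field $\phi \colon \cV^p \to \cV^{p-1} \otimes \Omega^{1,0}_S$), that $\beta^{0,1}$ raises the grading by exactly one, say by $\psi \colon \cV^p \to \cV^{p+1} \otimes \Omega^{0,1}_S$, and that $h(\psi(u), v) = - h(u, \phi(v))$ for sections $u$ of $\cV^p$ and $v$ of $\cV^{p+1}$; substituting $h = (-1)^p h_H$ on $\cV^p$ and $h = (-1)^{p+1} h_H$ on $\cV^{p+1}$ turns this into $h_H(\psi(u), v) = h_H(u, \phi(v))$, i.e. $\psi = \phi^\ast$. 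Finally, since $\beta$ exchanges $h$-orthogonal summands, the same flatness identity applied to two sections of a single $\cV^p$ collapses to $d\,h_H(u,v) = h_H(Du, v) + h_H(u, Dv)$, which together with $D^{0,1} = \bar\partial_\cE$ shows that $D$ is the Chern connection of $(\cE, h_H)$.

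With the normal form $\nabla = D + \phi + \phi^\ast$ in hand, I would expand $0 = \nabla^2$ and sort the terms by their effect on the grading. The grading-preserving part consists of $D^2 = \Theta_{(\cE, h_H)}$, which has type $(1,1)$, and $\phi \wedge \phi^\ast + \phi^\ast \wedge \phi$, also of type $(1,1)$, while the grading-shifting terms $\phi \wedge \phi$, $\phi^\ast \wedge \phi^\ast$ and the covariant exterior derivatives of $\phi$ and $\phi^\ast$ vanish in their own graded components; hence
\[ \Theta_{(\cE, h_H)} = - \phi \wedge \phi^\ast - \phi^\ast \wedge \phi. \]
Evaluating on $X$ of type $(1,0)$ and $\bar Y$ of type $(0,1)$, and using that $\phi$ annihilates $(0,1)$-vectors while $\phi^\ast$ annihilates $(1,0)$-vectors, gives $\Theta_{(\cE, h_H)}(X, \bar Y) = - \phi(X) \circ \phi^\ast(\bar Y) + \phi^\ast(\bar Y) \circ \phi(X)$; pairing against sections $e, f$ and moving the operators across $h_H$ by the adjunctions $\phi(X)^\ast = \phi^\ast(\bar X)$ and $\phi^\ast(\bar Y)^\ast = \phi(Y)$ produces exactly the stated formula. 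The main obstacle will be the normal form: extracting from Griffiths transversality and the flatness of the polarization that $\nabla$ equals $D + \phi + \phi^\ast$ with $D$ unitary and the two off-diagonal blocks mutually $h_H$-adjoint and shifting the grading by exactly $\pm 1$. Once this is secured, the curvature identity is a formal consequence of $\nabla^2 = 0$ and bidegree/grading bookkeeping, and it reproduces the computations of Griffiths and Schmid cited above.
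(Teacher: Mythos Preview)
The paper does not give its own proof of this proposition; it is stated as a known result with references to Griffiths and Schmid, and is then used as a black box in the subsequent curvature arguments. So there is no in-paper proof to compare against.

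Your argument is essentially the classical one from those references: identify $\cE$ smoothly with $\bigoplus_p \cV^p$ via the $h$-orthogonal splitting of the Hodge filtration, decompose the flat connection as $\nabla = D + \phi + \phi^\ast$ with $D$ the Chern connection of $(\cE,h_H)$, and read off $\Theta = -\phi\wedge\phi^\ast - \phi^\ast\wedge\phi$ from the grading- and bidegree-zero piece of $\nabla^2 = 0$. The final adjunction step is carried out correctly and yields exactly the stated formula.

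One small point of precision: the inclusion $\nabla^{1,0}\cV^p \subset \cF^{p-1}\otimes\Omega^{1,0}_S$ coming from Griffiths transversality does not by itself force $\beta^{1,0}$ to \emph{lower} the grading, since $\cF^{p-1} = \bigoplus_{q\geq p-1}\cV^q$ still contains $\cV^{p+1},\cV^{p+2},\ldots$; it only bounds the lowering by one step. The grading-raising components of $\beta^{1,0}$ (and the ``raise by more than one'' components of $\beta^{0,1}$) are killed by the $h$-flatness argument you give immediately afterwards, so the conclusion stands --- but the attribution of which constraint does what is slightly off in that sentence. With that cosmetic fix, the proof is complete and matches the standard treatment.
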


In the formula above, $\phi^\ast$ denotes the adjoint of $\phi$, meaning that $\phi^\ast(X)$ and $\phi(\bar X)$ are adjoint with respect to the induced Hodge metric on $\cE nd (\cE)$, for every tangent vector $X$ of type $(1,0)$. The decomposition $\phi = \oplus_p \phi_p $ corresponds to the decomposition $\phi^\ast = \oplus_p \phi^\ast_p $ where $\phi^\ast_p$ is a $(0,1)$-form with values in $\cH om(\cE^{p-1}, \cE^p)$.

\subsection{Induced system of Hodge bundles on the endomorphism bundle}

Let $\mathbb{V}$ be a variation of complex polarized Hodge structures on a complex manifold $S$, and let $(\cE = \bigoplus_p \cE^p, \phi := \oplus_p \phi_p )$ be its associated system of Hodge bundles. By functoriality, the holomorphic vector bundle $\cE nd(\cE)$ has a structure of system of Hodge bundles $\mathcal{E}nd(\cE) = \bigoplus_p \mathcal{E}nd(\cE)^p$ with Higgs field $\Phi$, which is associated to the variation of complex polarized Hodge structures $\End(\mathbb{V})$. One has $\mathcal{E}nd(\cE)^p = \bigoplus_k \mathcal{H}om(\cE^k , \cE^{k+p})$, and for any local holomorphic section $\Psi$ of $\mathcal{E}nd(\cE)$ and any local holomorphic section $X$ of $  \mathcal{T}_S$, we have 
 \[  \Phi(X)(\Psi)= \phi(X) \circ \Psi - \Psi \circ \phi(X)= [\phi(X) , \Psi] .\]
Similarly, the adjoint $\Phi^\ast$ of $\Phi$ is given by 
 \[  \Phi^\ast(X)(\Psi)=  \phi^\ast(X) \circ \Psi - \Psi \circ \phi^\ast(X)=  [\phi^\ast(X) , \Psi] .\]
In particular, we observe the following immediate application of Proposition \ref{curvature computation}.
\begin{proposition}\label{bisectional curvature}
For any any local holomorphic section $X$ of $  \mathcal{T}_S$ and any local holomorphic section $\Psi$ of $\mathcal{E}nd(\cE)$ such that $  \Phi(X)(\Psi)= 0$, we have 
\[  h_{H}(\Theta_{(\cE nd(\cE), h_H)}(X, \bar X) \Psi, \Psi) = - | [ \phi^\ast(\bar X), \Psi ] |_{h_H}^2  . \]
\end{proposition}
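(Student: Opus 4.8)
The plan is to deduce Proposition \ref{bisectional curvature} directly from the curvature formula of Proposition \ref{curvature computation}, applied to the endomorphism bundle $\cE nd(\cE)$ equipped with its Hodge metric $h_H$ and the induced Higgs field $\Phi$. First I would write down the formula of Proposition \ref{curvature computation} with $\cE$ replaced by $\cE nd(\cE)$, $\phi$ replaced by $\Phi$, and $e = f = \Psi$, $X = Y$: this gives
\[
h_H\big(\Theta_{(\cE nd(\cE),h_H)}(X,\bar X)\Psi,\Psi\big) = h_H\big(\Phi(X)\Psi,\Phi(X)\Psi\big) - h_H\big(\Phi^\ast(\bar X)\Psi,\Phi^\ast(\bar X)\Psi\big) = |\Phi(X)\Psi|_{h_H}^2 - |\Phi^\ast(\bar X)\Psi|_{h_H}^2.
\]
Then I would invoke the hypothesis $\Phi(X)(\Psi) = 0$ to kill the first term, and use the explicit description $\Phi^\ast(X)(\Psi) = [\phi^\ast(X),\Psi]$ recorded just above the statement to rewrite the second term as $-|[\phi^\ast(\bar X),\Psi]|_{h_H}^2$, which is exactly the claimed identity.

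The one point that needs a word of care is the identification of the adjoint: the formula $\Phi^\ast(X)(\Psi) = [\phi^\ast(X),\Psi]$ is stated in the excerpt as "the adjoint $\Phi^\ast$ of $\Phi$ is given by" that expression, so I would simply cite it; strictly speaking one should note that the adjoint of the operator $\mathrm{ad}(\phi(X)) = [\phi(X),-]$ on $(\cE nd(\cE), h_H)$ with its natural induced metric is $\mathrm{ad}(\phi(X)^\ast) = \mathrm{ad}(\phi^\ast(X)) = [\phi^\ast(X),-]$, which follows from the fact that $\phi^\ast(X)$ is by definition the adjoint of $\phi(\bar X)$ together with the compatibility of the induced metric on $\cE nd(\cE)$ with composition. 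Since all of this is already asserted in the text preceding the proposition, the proof is essentially a one-line substitution. There is no real obstacle here: the content is entirely contained in Proposition \ref{curvature computation}, and the "proof" consists in specializing it and applying the vanishing hypothesis.

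\begin{proof}
Apply Proposition \ref{curvature computation} to the system of Hodge bundles $(\cE nd(\cE), \Phi)$ associated to $\End(\bV)$, with $X = Y$ and $e = f = \Psi$. This yields
\[
h_H\big(\Theta_{(\cE nd(\cE), h_H)}(X, \bar X)\Psi, \Psi\big) = \big|\Phi(X)(\Psi)\big|_{h_H}^2 - \big|\Phi^\ast(\bar X)(\Psi)\big|_{h_H}^2.
\]
By hypothesis $\Phi(X)(\Psi) = 0$, so the first term vanishes. Using the formula $\Phi^\ast(\bar X)(\Psi) = [\phi^\ast(\bar X), \Psi]$ recalled above, the second term equals $-|[\phi^\ast(\bar X), \Psi]|_{h_H}^2$, which is the desired identity.
\end{proof}
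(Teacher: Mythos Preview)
Your proof is correct and is exactly the approach indicated in the paper, which simply calls the proposition an ``immediate application of Proposition \ref{curvature computation}'' without spelling out the details. Your one-line specialization to $(\cE nd(\cE),\Phi)$ with $X=Y$, $e=f=\Psi$, followed by the vanishing hypothesis and the formula $\Phi^\ast(\bar X)(\Psi)=[\phi^\ast(\bar X),\Psi]$, is precisely what is intended.
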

Note that this applies in particular to  $\Psi = \phi(Y)$ for any holomorphic vector field $Y$ since $[\phi(X) , \phi(Y)] = 0$.

\subsection{The Griffiths-Schmid pseudo-metric and its curvature}

So far the computations hold more generally for any Higgs bundle $(\cE, \phi)$ equipped with a harmonic metric $h_H$. We will now take advantage of the decomposition $\cE = \bigoplus_p \cE^p$.

\begin{definition}
For any integer $p$, we denote by $h_p$ the pseudo-metric on $  \mathcal{T}_S$ obtained as the pull-back by $\phi_p$ of the Hodge metric on $\mathcal{H}om(\cE^p , \cE^{p-1})$. The sum of these pseudo-metrics, which coincides with the pseudo-metric on $  \mathcal{T}_S$ obtained as the pull-back by $\phi$ of the Hodge metric on $\mathcal{E}nd(\cE)$, is called the Griffiths-Schmid pseudo-metric on $S$ induced by $\bV$.  
\end{definition}

Note that by its very definition, the locus where the Griffiths-Schmid pseudo-metric is non-degenerate coincide with the Zariski-open (but maybe empty) subset of $S$ where the $\cO_S$-linear morphism $\phi : \cT_S \rightarrow \cE nd(\cE)$ is injective, or in other words with the locus where the period map associated to $\bV$ is immersive.\\

Recall that one gets a correspondence by associating to any real $(1,1)$-form $\omega$ on $S$ the hermitian sesquilinear form $h$ on $\cT_S$ which satisfies $h(X,X) = - i \cdot \omega(X, \bar X)$ for any tangent vector of type $(1,0)$ (as usual we identify the holomorphic bundle with $T_{\bC}^{1,0}$). The following result is an easy consequence of Griffiths computation of the curvature of the Hodge metrics.
\begin{proposition}[Griffiths, cf. \cite{Schmid}]
Through the correspondence between real $(1,1)$-form and hermitian sesquilinear forms on $\cT_S$, the curvature form of the Chern connection of $(\det \cF^p , h_H)$ corresponds to $h_p$. A fortiori, the curvature form of the Chern connection of the Griffiths line bundle $L_{\mathbb{V}} :=  \otimes_p \det \cF^p$ equipped with the Hodge metric corresponds to the Griffiths-Schmid pseudo-metric.
\end{proposition}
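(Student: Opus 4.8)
The plan is to reduce the statement to Griffiths' curvature formula (Proposition \ref{curvature computation}) by passing to the associated graded. The starting point is that, for each $p$, the short exact sequence $0 \to \cF^{p+1} \to \cF^p \to \cE^p \to 0$ induces a holomorphic isomorphism $\det \cF^p \cong \det \cF^{p+1} \otimes \det \cE^p$, hence by iteration (the filtration being finite) a holomorphic isomorphism $\det \cF^p \cong \bigotimes_{q \geq p} \det \cE^q$. Since the Hodge decomposition $\cV_s = \bigoplus_q \cV^q_s$ is orthogonal for $h_H$, the $\mathscr{C}^\infty$ splittings of these exact sequences given by orthogonal complements are compatible with the metrics, so the above isomorphism is an isometry for the metrics induced by $h_H$. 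Consequently the curvature of the Chern connection of $(\det \cF^p, h_H)$ is the sum over $q \geq p$ of the curvatures of the $(\det \cE^q, h_H)$, and each of the latter is the trace $\tr \Theta_{(\cE^q, h_H)}$, the curvature of a determinant line bundle being the trace of the curvature.

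Next I would compute $\tr \Theta_{(\cE^q, h_H)}$ from Proposition \ref{curvature computation}. Each $\cE^q$ is a holomorphic direct summand of $\cE$ and the decomposition $\cE = \bigoplus_q \cE^q$ is $h_H$-orthogonal, so the Chern connection of $(\cE, h_H)$ is block-diagonal and $\Theta_{(\cE^q, h_H)}$ is the corresponding block of $\Theta_{(\cE, h_H)}$. Feeding sections $e, f \in \cE^q$ and $X = Y$ into the formula of Proposition \ref{curvature computation} and using that $\phi(X)$ maps $\cE^q$ into $\cE^{q-1}$ while $\phi^\ast(\bar X)$ maps $\cE^q$ into $\cE^{q+1}$, one finds
\[ \Theta_{(\cE^q, h_H)}(X, \bar X) = \phi_q(X)^\ast \phi_q(X) - \phi_{q+1}(X)\, \phi_{q+1}(X)^\ast \]
as an endomorphism of $\cE^q$, where $\phi_{q+1}(X)^\ast \colon \cE^q \to \cE^{q+1}$ is the $\cE^q$-component of $\phi^\ast(\bar X)$. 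Taking traces and using $\tr(T^\ast T) = \tr(T T^\ast) = \lVert T \rVert^2_{h_H}$ together with the definition of the $h_p$'s gives $\tr \Theta_{(\cE^q, h_H)}(X, \bar X) = \lVert \phi_q(X) \rVert^2_{h_H} - \lVert \phi_{q+1}(X) \rVert^2_{h_H}$, i.e. the curvature form of $(\det \cE^q, h_H)$ corresponds, through the fixed correspondence, to $h_q - h_{q+1}$.

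Summing over $q \geq p$, the sum telescopes, because $\cE^q = 0$ (hence $\phi_q = 0$ and $h_q = 0$) for $q$ outside a finite range; so the curvature form of $(\det \cF^p, h_H)$ corresponds to $h_p$, which is the first assertion. The second assertion follows at once: $L_{\mathbb{V}} = \bigotimes_p \det \cF^p$, so its curvature form corresponds to $\sum_p h_p$, which is by definition the Griffiths-Schmid pseudo-metric.

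The one point that requires care is the index bookkeeping in the middle step: keeping track of the fact that $\phi$ lowers the Hodge grading while $\phi^\ast$ raises it, so that after restricting Griffiths' formula to $\cE^q$ and taking the trace the negative term becomes $\lVert \phi_{q+1}(X) \rVert^2_{h_H}$ rather than $\lVert \phi_q(X) \rVert^2_{h_H}$ — it is precisely this shift that makes the sum over $q$ telescope. The isometric splitting $\det \cF^p \cong \bigotimes_{q \geq p} \det \cE^q$, the Chern-Weil identity for determinants, and the unwinding of the real $(1,1)$-form $\leftrightarrow$ hermitian form dictionary at the end are all routine.
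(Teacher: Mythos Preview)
Your argument is correct and is precisely the standard derivation the paper alludes to when it says the proposition ``is an easy consequence of Griffiths' computation of the curvature of the Hodge metrics''; the paper itself gives no proof beyond the citation to Griffiths and Schmid. The key steps --- the isometric splitting $\det\cF^p\cong\bigotimes_{q\geq p}\det\cE^q$ (which holds because the orthogonal complement of $\cF^{q+1}$ in $\cF^q$ is exactly the $\mathscr{C}^\infty$ summand $\cV^q$, so the quotient metric on $\cE^q$ agrees with the Hodge metric), the block-diagonality of the Chern connection on $\cE=\bigoplus_q\cE^q$, and the telescoping trace computation from Proposition~\ref{curvature computation} --- are all handled correctly, including the index shift you flag at the end.
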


In other words, the Griffiths-Schmid pseudo-metric on $S$ is K\"ahler and its K\"ahler form is equal to the Chern curvature form of $(L_{\mathbb{V}}  , h_H)$.

\begin{proposition}
Over the locus where it is non-degenerate (which is Zariski-open but possibly empty), the Griffiths-Schmid metric has non-positive holomorphic bisectional curvature and negative holomorphic sectional curvature $-  \gamma$ with 
\[ \frac{1}{\gamma}  \leqslant \frac{w^2}{4} \cdot \rank( \cV). \] 
\end{proposition}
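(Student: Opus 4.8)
The plan is to reduce everything to the curvature formula of Proposition \ref{curvature computation} applied to the Hodge bundle $\cE$, exploiting the decomposition $\cE = \bigoplus_p \cE^p$ and the fact that the Griffiths-Schmid form is, by the previous proposition, the Chern curvature of $(L_{\mathbb{V}}, h_H) = \bigotimes_p (\det \cF^p, h_H)$. First I would fix a point $s$ where the Griffiths-Schmid metric is non-degenerate and a tangent vector $X$ of type $(1,0)$; after rescaling we may assume the Griffiths-Schmid norm of $X$ equals $1$, i.e. $\sum_p \|\phi_p(X)\|_{h_H}^2 = 1$. Writing $\psi_p := \phi_p(X) \in \Hom(\cE^p, \cE^{p-1})$, one has $\|X\|_{GS}^2 = \sum_p \|\psi_p\|^2$ and $\|X\|_{GS}^4 = |\langle X, X\rangle|^2$ is the quantity that appears squared in the holomorphic sectional curvature. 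By the standard computation of the curvature of a pullback (pseudo-)metric under a holomorphic map — equivalently, differentiating $\log \|X\|_{GS}^2$ twice along a disc through $s$ tangent to $X$ — the holomorphic sectional curvature is, up to the appropriate normalization, governed by $\langle \Theta_{(\cE nd(\cE), h_H)}(X,\bar X) \phi(X), \phi(X)\rangle$ together with a nonnegative ``second fundamental form'' correction term coming from the fact that $\phi(X)$ need not be a holomorphic section of $\cE nd(\cE)$ in the directions orthogonal to $X$. Since $[\phi(X), \phi(X)] = 0$, Proposition \ref{bisectional curvature} gives the bisectional contribution $\langle \Theta(X,\bar X)\phi(X), \phi(X)\rangle = -\|[\phi^\ast(\bar X), \phi(X)]\|_{h_H}^2 \leqslant 0$, which already yields non-positivity of the holomorphic bisectional curvature.

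Next, for the quantitative bound on the holomorphic sectional curvature, I would estimate $\|[\phi^\ast(\bar X), \phi(X)]\|_{h_H}^2$ from below in terms of $\|X\|_{GS}^4$. Write $A := \phi(X) = \sum_p \psi_p$ and $A^\ast := \phi^\ast(\bar X) = \sum_p \psi_p^\ast$ where $\psi_p^\ast \in \Hom(\cE^p, \cE^{p+1})$ is the adjoint of $\psi_{p+1}$. The commutator $[A^\ast, A] = A^\ast A - A A^\ast$ is block-diagonal with respect to $\cE = \bigoplus_p \cE^p$: its $p$-th block is $\psi_{p+1}^\ast \psi_{p+1} - \psi_p \psi_p^\ast$, a self-adjoint endomorphism of $\cE^p$. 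Taking traces against suitably chosen test endomorphisms, or more directly summing $\mathrm{tr}$ of each block times itself, one gets
\[
\|[A^\ast, A]\|_{h_H}^2 = \sum_p \|\psi_{p+1}^\ast \psi_{p+1} - \psi_p \psi_p^\ast\|_{h_H}^2 \geqslant \frac{1}{\text{(number of relevant blocks)}}\left(\sum_p \mathrm{tr}(\text{something})\right)^2,
\]
where the key identity is that the alternating sum or a weighted sum of the $\mathrm{tr}(\psi_{p+1}^\ast\psi_{p+1} - \psi_p\psi_p^\ast)$ telescopes to recover $\sum_p \|\psi_p\|^2 = \|X\|_{GS}^2$ multiplied by an appropriate combinatorial factor. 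The natural device here is to pair $[A^\ast, A]$ with the ``grading operator'' $N = \bigoplus_p p \cdot \mathrm{id}_{\cE^p}$ (or a centered version $N - \frac{w}{2}$ so that it is traceless-ish and has operator norm $\leqslant w/2$): since $[N, A] = -A$ one computes $\mathrm{tr}([A^\ast,A] N) = \mathrm{tr}(A^\ast[N,A]) = -\|A\|_{h_H}^2 = -\|X\|_{GS}^2$ (signs to be tracked), and then Cauchy–Schwarz gives $\|X\|_{GS}^2 = |\mathrm{tr}([A^\ast,A](N - c))| \leqslant \|[A^\ast,A]\|_{h_H} \cdot \|N-c\|_{h_H}$, with $\|N - c\|_{h_H}^2 = \sum_p (p-c)^2 r_p \leqslant \frac{w^2}{4}\sum_p r_p = \frac{w^2}{4}\rank(\cV)$ for the optimal centering $c = \frac{a+b}{2}$. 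This yields $\|[A^\ast,A]\|_{h_H}^2 \geqslant \frac{\|X\|_{GS}^4}{\frac{w^2}{4}\rank(\cV)}$, i.e. the holomorphic sectional curvature in direction $X$ is $\leqslant -\big(\frac{w^2}{4}\rank(\cV)\big)^{-1}\|X\|_{GS}^2$, which after normalizing $\|X\|_{GS} = 1$ gives the claimed $\frac{1}{\gamma} \leqslant \frac{w^2}{4}\rank(\cV)$.

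Finally I would need to confirm that the ``second fundamental form'' correction term mentioned above only helps, i.e. makes the holomorphic sectional curvature even more negative, so that the bisectional estimate really does bound the sectional curvature. This is the standard phenomenon that curvature decreases in holomorphic subbundles / under taking the curvature of a pullback metric: if $u : \Delta \to S$ is holomorphic with $u(0) = s$, $u'(0) \in \C^\ast X$, then $\phi \circ du : T_\Delta \to \cE nd(\cE)$ is a holomorphic bundle map, $\|u'\|^2_{u^\ast GS} = \|\phi(du)\|_{h_H}^2$ is the squared norm of a holomorphic section of the pullback of $\cE nd(\cE)$, and the curvature of this squared-norm function is $\leqslant$ the curvature of $(\cE nd(\cE), h_H)$ evaluated on that section — the Gauss–Griffiths subbundle inequality. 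Applying this at $0$ and taking the supremum over all such $u$ recovers exactly $\mathrm{HSC}$, bounded above by the bisectional quantity already estimated.

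\textbf{Main obstacle.} The genuinely substantive step is the combinatorial/linear-algebra estimate $\|[A^\ast, A]\|_{h_H}^2 \geqslant \big(\frac{w^2}{4}\rank(\cV)\big)^{-1}\|A\|_{h_H}^4$ together with getting the \emph{optimal} constant $\frac{w^2}{4}$ (not, say, $w^2$): one must pick the centering of the grading operator $N$ correctly and verify that $\sup_p |p - c| \leqslant w/2$ is achieved, and one must check that no cross terms between different $\cE^p$ blocks are lost. Everything else — the reduction via Proposition \ref{bisectional curvature}, the vanishing $[\phi(X),\phi(X)] = 0$, and the Gauss–Griffiths decreasing-in-subbundles principle — is formal once the setup from the earlier sections is in place.
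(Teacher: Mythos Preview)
Your proof is correct and structurally identical to the paper's: both use Proposition~\ref{bisectional curvature} together with the curvature-decreasing-in-subbundles principle to reduce to the linear-algebra inequality $\|[A^\ast,A]\|_{h_H}^2 \geq \frac{4}{w^2\,\rk\cV}\,\|A\|_{h_H}^4$ for $A = \phi(X) \in \End(\cE)^{-1}$ (the paper isolates this as Lemma~\ref{computation sectional curvature}). The genuine difference is in how that inequality is proved. The paper argues combinatorially: it sets $a_i = \tr(\psi_i^\ast\psi_i)$, bounds each block $\|\psi_{i+1}^\ast\psi_{i+1} - \psi_i\psi_i^\ast\|^2 \geq r_i^{-1}(a_{i+1}-a_i)^2$ via Cauchy--Schwarz, and then telescopes the resulting sums $\sum_{i\geq p}$ and $\sum_{i<p}$ using a weighted Cauchy--Schwarz lemma, summing over $p$ to recover $(\sum a_p)^2$ at the cost of the factor $w^2/4$. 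Your grading-operator argument is shorter and more conceptual: since $[N,A]=-A$ one gets $\tr([A^\ast,A](N-c)) = \|A\|^2$ directly (as $[A^\ast,A]$ is traceless the centering is free), and a single Cauchy--Schwarz against $\|N-c\|_{HS}^2 \leq \frac{w^2}{4}\rk\cV$ finishes. Both yield the same optimal constant; your route avoids the iterated telescoping and makes the role of the length $w$ transparent as the operator norm of the centered grading, while the paper's approach has the minor advantage of tracking the individual Hodge numbers $r_i$ (which could in principle give sharper bounds in specific cases, though the paper does not pursue this).
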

\begin{proof}
We restrict ourselves to the locus in $S$ where the period map is immersive. It follows readily from Proposition \ref{bisectional curvature} that for any tangent vectors $X$ and $Y$ of type $(1,0)$, one has
\[  h_{H}(\Theta_{(\cE nd(\cE), h_H)}(X, \bar X) \phi(Y), \phi(Y)) = - | [ \phi^\ast(\bar X), \phi(Y) ] |_{h_H}^2   .\]
Since the Griffiths curvature decreases in subbundles, it follows that the Griffiths-Schmid metric has non-positive holomorphic bisectional curvature. Moreover, it follows that its holomorphic sectional curvature in the direction $X$ is less or equal to:
\[  \frac{1}{|X|_{h_H}^4} \cdot  h_{H}(\Theta_{(\cE nd(\cE), h_H)}(X, \bar X) \phi(X), \phi(X)) =- \frac{1}{|X|_{h_H}^4} \cdot | [ \phi^\ast(\bar X), \phi(X) ] |_{h_H}^2  . \]
Therefore, we can rely on the next result to conclude the proof.
\end{proof}

\begin{lemma}\label{computation sectional curvature}
If $(V, F^p ,h)$ is a complex polarized Hodge structure of length $w$, then for any $u \in  \End(V)^{-1} $:
\[  | [u, u^\ast] |^2_{h_H}    \geq   \frac{4}{w^2 \cdot \dim V} \cdot  | u|^4_{h_H} .\]
Here $u ^\ast$ denotes the adjoint of $u$ with respect to the Hodge metric $h_H$.
\end{lemma}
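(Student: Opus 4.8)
We work with a complex polarized Hodge structure $(V, F^{\sbt}, h)$ of length $w$ and an endomorphism $u \in \End(V)^{-1}$, so $u$ maps $V^k \to V^{k-1}$ for every $k$, and its adjoint $u^\ast \in \End(V)^{+1}$ maps $V^k \to V^{k+1}$. The plan is to decompose everything according to the Hodge grading, reduce the inequality to a statement about the pieces $u_k := u|_{V^k} \in \Hom(V^k, V^{k-1})$, and then extract the best possible constant from a linear-algebra optimization over the chain $V^0 \to \cdots \to V^w$ (after shifting so that the nonzero pieces are $V^0, \dots, V^w$). First I would write $u = \sum_k u_k$ and $u^\ast = \sum_k u_k^\ast$, and observe that $[u, u^\ast]$ is block-diagonal: its component on $V^k$ is $u_{k+1} u_{k+1}^\ast - u_k^\ast u_k$, a self-adjoint (for $h_H$) operator on $V^k$. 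Hence $|[u,u^\ast]|_{h_H}^2 = \sum_k \| u_{k+1} u_{k+1}^\ast - u_k^\ast u_k \|^2_{HS}$, where $\|\cdot\|_{HS}$ is the Hilbert-Schmidt norm for the (positive definite) Hodge metric, while $|u|_{h_H}^2 = \sum_k \|u_k\|_{HS}^2$.

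**The key reduction.** Set $a_k := \|u_k\|_{HS}^2 \geq 0$ for $k = 1, \dots, w$ (and $a_k = 0$ otherwise), so $|u|_{h_H}^4 = \big(\sum_k a_k\big)^2$. Writing $A_k := u_k^\ast u_k \succeq 0$ on $V^k$ and $B_k := u_{k+1}u_{k+1}^\ast \succeq 0$ on $V^k$, we have $\tr A_k = \tr B_k$... actually $\tr(u_k^\ast u_k) = \tr(u_k u_k^\ast) = a_k$, so $\tr B_k = a_{k+1}$ while $\tr A_k = a_k$. The trace of the $k$-th block of $[u,u^\ast]$ is therefore $a_{k+1} - a_k$. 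The elementary inequality $\|M\|_{HS}^2 \geq (\tr M)^2 / \dim$ applied to each block $M = B_k - A_k$ acting on $V^k$ gives
\[
|[u,u^\ast]|_{h_H}^2 \;\geq\; \sum_{k} \frac{(a_{k+1} - a_k)^2}{r_k},
\]
where $r_k = \dim V^k$. The remaining task is purely a scalar optimization: minimize $\sum_k (a_{k+1}-a_k)^2/r_k$ over $a_k \geq 0$, not all zero, subject to $\big(\sum_k a_k\big)^2 = 1$. By Cauchy--Schwarz, telescoping along partial sums, one finds the minimizer and obtains a bound of the form $\geq c(r_0,\dots,r_w)$; one then checks that $c \geq 4/(w^2 \cdot \dim V)$ by bounding $\sum r_k = \dim V$ and the number of gaps by $w$.

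**Carrying out the scalar estimate.** Concretely, for any index $j$ with $a_j \neq 0$, write $a_j = \sum_{k < j}(a_{k+1}-a_k) \cdot [\text{something}]$ — more precisely, since $a_0 = 0$ (shifting so the Hodge structure is supported in $[0,w]$ forces $u_0 = 0$, hence $a_0 = 0$; similarly $a_{w+1} = 0$), we have $a_j = \sum_{k=0}^{j-1}(a_{k+1} - a_k)$ for each $j \in \{1,\dots,w\}$. By Cauchy--Schwarz,
\[
a_j^2 = \Big( \sum_{k=0}^{j-1} (a_{k+1}-a_k) \Big)^2 \leq \Big( \sum_{k=0}^{j-1} r_k \Big) \cdot \Big( \sum_{k=0}^{j-1} \frac{(a_{k+1}-a_k)^2}{r_k} \Big) \leq \Big(\sum_k r_k\Big) \cdot |[u,u^\ast]|_{h_H}^2 = \dim V \cdot |[u,u^\ast]|_{h_H}^2.
\]
Summing over $j = 1, \dots, w$ gives $\sum_j a_j \leq \frac{w}{?}$... applying Cauchy--Schwarz once more, $\big(\sum_{j=1}^w a_j\big)^2 \leq w \sum_j a_j^2 \leq w \cdot w \cdot \dim V \cdot |[u,u^\ast]|^2_{h_H}$, i.e. $|u|^4_{h_H} \leq w^2 \dim V \cdot |[u,u^\ast]|^2_{h_H}$, which rearranges to the claimed inequality. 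I would present exactly this chain.

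**Main obstacle.** The conceptual content and the only nontrivial point is the block-diagonal structure of $[u,u^\ast]$ together with the trace identities $\tr(u_k^\ast u_k) = \tr(u_k u_k^\ast)$ in the Hodge metric — here one must be careful that $u^\ast$ means the adjoint for the \emph{positive definite} Hodge metric $h_H$ (not the indefinite polarization $h$), so that $\|\cdot\|_{HS}$ is a genuine norm and $A_k, B_k$ are genuinely positive semidefinite with the stated traces. The vanishing $a_0 = 0 = a_{w+1}$ coming from the length-$w$ hypothesis is what makes the telescoping work and is where the length $w$ enters; everything else is Cauchy--Schwarz. I do not expect any analytic difficulty, only bookkeeping with indices and the two-fold application of Cauchy--Schwarz to get the sharp-looking constant $4/(w^2 \dim V)$ (the factor $4$ appears because one passes from $a_j = \sum(a_{k+1}-a_k)$ over roughly $w/2$ steps in each direction, or equivalently because the extremal $a_j$ is ``tent-shaped''; I would double-check whether the clean argument above already yields $4$ or whether one needs to split the sum for $a_j$ as a minimum of a left-partial-sum and a right-partial-sum expression to gain the factor of $4$).
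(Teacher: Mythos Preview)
Your strategy is exactly the paper's: decompose $u=\oplus u_k$, observe that $[u,u^\ast]$ is block-diagonal with $k$-th block $u_{k+1}u_{k+1}^\ast-u_k^\ast u_k$, apply the trace inequality $\|M\|_{HS}^2\geq(\tr M)^2/\dim$ to each block to reduce to the scalar problem in the $a_k:=\|u_k\|_{HS}^2$, then telescope and use Cauchy--Schwarz. The only point to flag is that your ``clean argument'' as written yields only the constant $1/(w^2\dim V)$, not $4/(w^2\dim V)$: from $a_j^2\leq \dim V\cdot|[u,u^\ast]|^2$ and $(\sum a_j)^2\leq w\sum a_j^2$ you get $|u|^4\leq w^2\dim V\cdot|[u,u^\ast]|^2$, missing the factor~$4$.

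Your parenthetical suspicion is correct and is precisely what the paper does. For each $p$ one uses \emph{both} telescoping representations $a_p=\sum_{k<p}(a_{k+1}-a_k)=-\sum_{k\geq p}(a_{k+1}-a_k)$ (the second using $a_{w+1}=0$). Cauchy--Schwarz applied to each gives
\[
\sum_{k<p}\frac{(a_{k+1}-a_k)^2}{r_k}\;\geq\;\frac{a_p^2}{\sum_{k<p}r_k},\qquad
\sum_{k\geq p}\frac{(a_{k+1}-a_k)^2}{r_k}\;\geq\;\frac{a_p^2}{\sum_{k\geq p}r_k},
\]
and adding yields $\sum_k\frac{(a_{k+1}-a_k)^2}{r_k}\geq\bigl(\tfrac{1}{f_p}+\tfrac{1}{\dim V-f_p}\bigr)a_p^2\geq\tfrac{4}{\dim V}\,a_p^2$ by the elementary bound $\tfrac{1}{x}+\tfrac{1}{s-x}\geq\tfrac{4}{s}$. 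Summing over $p=1,\dots,w$ and applying $(\sum a_p)^2\leq w\sum a_p^2$ then gives the stated constant $4/(w^2\dim V)$. So the missing ingredient is exactly the left/right split you anticipated.
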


Note that when $(V, F^p ,h)$ is autodual (this holds for example if it is a real polarized Hodge structure), then $ \frac{4}{w^2 \cdot \dim V} =   \frac{2}{w \cdot \sum_{i= 1}^{w} \rank( F^p) }$. \\

Before giving the proof of the lemma, we recall two well-known elementary applications of the Cauchy-Schwarz lemma.

\begin{lemma} \label{Cauchy-Schwarz}
If $V$ is complex vector space equipped with a positive-definite hermitian sesquilinear form $h$ and $u = u^\ast \in \End(V)$, then the following inequality holds 
\[ \dim(V) \cdot \tr(u^2) \geq \left( \tr(u) \right)^2, \]
with equality exactly when $u$ is an homothety. 
\end{lemma}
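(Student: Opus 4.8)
The plan is to recognize this as an instance of the Cauchy--Schwarz inequality. Since $h$ is positive-definite, the sesquilinear form $\langle A, B\rangle := \tr(A^\ast B)$ on $\End(V)$ is itself a positive-definite hermitian inner product: writing $\langle A, A\rangle = \sum_i h(Ae_i, Ae_i)$ for an $h$-orthonormal basis $(e_i)_i$ exhibits it as a sum of squares, vanishing only when $A = 0$. Applying Cauchy--Schwarz to the pair $(\id, u)$ gives
\[ |\langle \id, u\rangle|^2 \leqslant \langle \id, \id\rangle \cdot \langle u, u\rangle . \]
Since $\id^\ast = \id$ and $u^\ast = u$, the right-hand side equals $\dim(V) \cdot \tr(u^2)$, while $\langle\id, u\rangle = \tr(u)$ is real because $u$ is self-adjoint, so the left-hand side is $(\tr u)^2$. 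This is exactly the claimed inequality.

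For the equality statement, recall that Cauchy--Schwarz is an equality precisely when $\id$ and $u$ are linearly dependent, i.e. $u = \lambda \cdot \id$ for some $\lambda \in \bC$; comparing with $u = u^\ast$ forces $\lambda \in \bR$, so $u$ is a homothety, and conversely any homothety $u = \lambda\cdot\id$ satisfies $(\tr u)^2 = (n\lambda)^2 = n \cdot (n\lambda^2) = \dim(V)\cdot\tr(u^2)$. Equivalently, one may diagonalize: $u$ is self-adjoint for the positive-definite form $h$, hence admits an $h$-orthonormal eigenbasis with real eigenvalues $\lambda_1, \dots, \lambda_n$ (with $n = \dim V$), and the asserted inequality reduces to $n\sum_i \lambda_i^2 \geqslant \big(\sum_i \lambda_i\big)^2$, the standard Cauchy--Schwarz for $(\lambda_1, \dots, \lambda_n)$ against $(1, \dots, 1)$, with equality iff all the $\lambda_i$ coincide, i.e. iff $u$ is scalar.

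There is essentially nothing difficult here; the only point requiring a word of justification is the positivity of the trace form $\langle\cdot, \cdot\rangle$, which is immediate from that of $h$. In the write-up I would keep whichever of the two presentations meshes most smoothly with the surrounding proof of Lemma~\ref{computation sectional curvature}; the basis-free version has the mild advantage of not requiring a choice of eigenbasis.
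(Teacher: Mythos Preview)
Your proof is correct and follows essentially the same approach as the paper: apply the Cauchy--Schwarz inequality $(\tr(uv^\ast))^2 \leq \tr(uu^\ast)\cdot\tr(vv^\ast)$ for the trace inner product on $\End(V)$ with $v = \id$. The paper's proof is a one-liner and omits the equality discussion, which you supply correctly.
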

\begin{proof}
This is a direct application of the Cauchy-Schwarz inequality 
\[ \left( \tr(u v^\ast) \right) ^2 \leq \tr(u u ^\ast) \cdot \tr(v v^\ast) \]
 with $v = id$. 
\end{proof}

\begin{lemma}\label{inequality}
For any real numbers $a_1, \ldots, a_n$ and any positive real numbers $\lambda_1, \ldots, \lambda_n$, the following inequality holds:
\[   \sum_i  \frac{a_i^2}{\lambda_i} \geq   \frac{1}{\sum_i \lambda_i} \cdot  \left( \sum_i a_i \right)^2, \]
with equality if and only if the $\frac{a_1}{\lambda_1} = \frac{a_2}{\lambda_2}= \cdots = \frac{a_n}{\lambda_n}$.
\end{lemma}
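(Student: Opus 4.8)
The plan is to deduce this from the Cauchy--Schwarz inequality in $\R^n$, which is the cleanest route. First I would rewrite each summand as a square: since each $\lambda_i>0$, set $x_i := a_i/\sqrt{\lambda_i}$ and $y_i := \sqrt{\lambda_i}$, so that $x_i y_i = a_i$ and $x_i^2 = a_i^2/\lambda_i$, $y_i^2 = \lambda_i$. Then the Cauchy--Schwarz inequality
\[
\left(\sum_i x_i y_i\right)^2 \leqslant \left(\sum_i x_i^2\right)\left(\sum_i y_i^2\right)
\]
becomes exactly
\[
\left(\sum_i a_i\right)^2 \leqslant \left(\sum_i \frac{a_i^2}{\lambda_i}\right)\left(\sum_i \lambda_i\right),
\]
and dividing by $\sum_i \lambda_i>0$ yields the claimed inequality. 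For the equality case, I would invoke the standard equality condition in Cauchy--Schwarz: equality holds if and only if the vectors $(x_i)$ and $(y_i)$ are proportional, i.e. there is a scalar $c$ with $x_i = c\, y_i$ for all $i$, equivalently $a_i/\lambda_i = c$ for all $i$, which is precisely the condition $a_1/\lambda_1 = \cdots = a_n/\lambda_n$.

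Alternatively, and perhaps even more self-contained, one can give a direct argument by expanding: the difference
\[
\left(\sum_i \frac{a_i^2}{\lambda_i}\right)\left(\sum_j \lambda_j\right) - \left(\sum_i a_i\right)^2
\]
equals $\sum_{i,j}\left(\frac{a_i^2 \lambda_j}{\lambda_i} - a_i a_j\right)$, and symmetrizing in $i,j$ rewrites this as $\tfrac12 \sum_{i,j}\left(\frac{a_i^2\lambda_j}{\lambda_i} + \frac{a_j^2\lambda_i}{\lambda_j} - 2a_ia_j\right) = \tfrac12\sum_{i,j}\lambda_i\lambda_j\left(\frac{a_i}{\lambda_i} - \frac{a_j}{\lambda_j}\right)^2 \geqslant 0$, with equality exactly when all the ratios $a_i/\lambda_i$ agree. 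Either way there is no real obstacle here: the only thing to be slightly careful about is the equality analysis, and the symmetrized-sum-of-squares identity makes the equality condition completely transparent. I would probably present the Cauchy--Schwarz deduction as the main proof since it is the shortest, mentioning the sum-of-squares identity only if the equality case needs emphasis.
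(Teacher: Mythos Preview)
Your proof is correct. The paper does not actually give a proof of this lemma; it simply introduces it as one of two ``well-known elementary applications of the Cauchy--Schwarz lemma'', so your Cauchy--Schwarz deduction via $x_i=a_i/\sqrt{\lambda_i}$, $y_i=\sqrt{\lambda_i}$ is precisely the argument the paper has in mind.
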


\begin{proof}[Proof of Lemma \ref{computation sectional curvature}]
In view of the expression of the Hodge metric on $\End(V)$, we need to prove the following inequality:

\[   \tr \left([u, u^\ast]^2 \right)   \geq   \frac{4}{w^2 \cdot \dim V} \cdot  \left( \tr \left(u \circ u^\ast \right) \right) ^2 .\]

Let $r_i := \dim(V^i)$ for every integer $i$. Up to renumbering, one can assume that $r_i = 0$ if $i < 0$ or $i > w$. If one writes $ u = \oplus_{i} u_i$ according to the decomposition 
\[ \End (V)^{-1} = \bigoplus_{i} \Hom \left(V^i, V^{i-1} \right), \]
so that $u_i = 0$ if $i >w$ or $i< 1$, we have that $ [u, u^\ast] = \oplus_{i}  \left(u_{i+1} \circ u_{i+1}^\ast - u_{i}^\ast \circ u_{i} \right)$. Since the decomposition above is orthogonal, it follows that
\[   \tr \left( [u, u^\ast]^2 \right) = \sum \limits_{i=1}^w \tr \left( \left(u_{i+1} \circ u_{i+1}^\ast - u_{i}^\ast \circ u_{i} \right)^2 \right) \geq  \sum_{i=1}^w \frac{1}{r_i} \cdot \left( \tr \left(u_{i+1} \circ u_{i+1}^\ast - u_i^\ast \circ u_i \right) \right)^2 ,\]
where the last inequality is obtained by applying Lemma \ref{Cauchy-Schwarz}. 
Letting $a_i := \tr \left( u_i \circ  u_i^\ast \right) = \tr \left( u_{i}^\ast \circ u_{i} \right)$, we get the inequality 
\[   \tr \left( [u, u^\ast]^2 \right)   \geq  \sum_{i=1}^w \frac{1}{r_i} \cdot \left( a_{i+1} - a_i \right)^2 . \]

On the other hand, with the help of Lemma \ref{inequality}, we have for any integer $p$:
\[   \sum_{i \geq p} \frac{1}{r_i} \cdot \left(a_{i+1} - a_i \right)^2 \geq  \frac{1}{ \sum_{i \geq p} r_i} \cdot \left( \sum_{i \geq p} (a_{i+1} - a_i) \right)^2 = \frac{a_p^2}{f_p} \]
and
\[   \sum_{i < p} \frac{1}{r_i} \cdot \left(a_{i+1} - a_i \right)^2 \geq  \frac{1}{ \sum_{i < p} r_i} \cdot \left( \sum_{i < p} \left( a_{i+1} - a_i \right) \right)^2 = \frac{a_p^2}{\dim V - f_p} ,\]
hence by summing the two we get:
\[ \sum_{i=1}^w \frac{1}{r_i} \cdot \left( a_{i+1} - a_i \right)^2  \geq  \left( \frac{1}{f_p}  + \frac{1}{\dim V - f_p} \right) \cdot a_p^2 . \]
Another application of Lemma \ref{inequality} shows that 
\[ \frac{1}{f_p}  + \frac{1}{\dim V - f_p} \geq  \frac{4}{\dim V} , \]
and by letting $p$ varying between $1$ and $w$, we get that
\[ w \cdot \left( \sum_{i=1}^w \frac{1}{r_i} \cdot (a_{i+1} - a_i)^2 \right)  \geq  \frac{4}{\dim V}  \cdot \sum_{p = 1}^w  a_p^2   . \]
A final application of Lemma \ref{inequality} gives that 
\[ \sum_{p = 1}^w  a_p^2 \geq \frac{1}{w} \cdot \left( \sum_{p = 1}^w  a_p\right) ^2 ,\]
so we finally get that
\[ \sum_{i=1}^w \frac{1}{r_i} \cdot \left( a_{i+1} - a_i \right)^2   \geq \frac{4}{w^2 \cdot \dim V} \cdot \left( \sum_{p = 1}^w  a_p\right) ^2 \]
and this finishes the proof.
\end{proof}

%%%%%%%%%%%%%%%%%%%%%%%%%%%%%%%%%%%%%%%%%%%%%%%ù
\section{Proof of the main Theorem}\label{sec:proof SMT for VHS}
In this section we prove Theorem \ref{thm:SMT for VHS}. As a first step, by putting together Theorem \ref{thm:SMT for pseudo-metrics with negative holomorphic sectional curvature} and Theorem \ref{thm:holomorphic sectional curvature for VHS}, we immediately deduce the following Arakelov-Nevanlinna inequality.
\begin{theorem}[An Arakelov-Nevanlinna inequality]\label{thm:Arakelov-Nevanlinna inequality} 
Let $B$ be a non-compact parabolic Riemann surface equipped with a parabolic exhaustion function $\sigma$, $\Sigma \subset B$ a discrete subset of points and $\mathbb{V} = (\cL, \cF^{\sbt}, h)$ be a variation of complex polarized Hodge structures of length $w$ on $ B - \Sigma$ with a non-constant period map. Then the first Chern form of the holomorphic line bundle $ L_{\mathbb{V}} = \otimes_p \det  \cF^p$ equipped with the hermitian metric induced by $h$ extends as a current \( [L_{\mathbb{V}}  ] \) on $B$, and there exists a positive real number $C$ such that the following inequality  
 \[ T_{[L_{\mathbb{V}} ]}(r) \leqslant \frac{w^2 \cdot \rk \cL}{4} \cdot ( - \mathfrak{X}_{\sigma}(r) + N_{\Sigma}(r))+  C \cdot (\log T_{[L_{\mathbb{V}}]}(r)+ \log r) \ \  \]
holds for all $r \in \bR_{\geq 1}$ outside a subset of finite Lebesgue measure.
\end{theorem}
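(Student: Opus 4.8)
The plan is to obtain Theorem~\ref{thm:Arakelov-Nevanlinna inequality} by feeding the pseudo-metric produced by Theorem~\ref{thm:holomorphic sectional curvature for VHS} into the local Second Main Theorem, Theorem~\ref{thm:SMTlocal}. Here the variation $\mathbb{V}$ lives directly on $B - \Sigma$, so no auxiliary map into an ambient variety is needed: $\mathbb{V}$ equips $B-\Sigma$ with a pseudo-metric whose associated $(1,1)$-form is proportional to the first Chern form of $L_{\mathbb{V}}$ and whose Gaussian curvature is bounded above by a negative constant depending only on $w$ and $\rk\cL$, and Theorem~\ref{thm:SMTlocal} turns this into the desired inequality.

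\textbf{Step 1.} First I would apply Theorem~\ref{thm:holomorphic sectional curvature for VHS} to $\mathbb{V}$, viewed as a variation of complex polarized Hodge structures on the one-dimensional complex manifold $S := B - \Sigma$. It yields the Griffiths--Schmid pseudo-metric on $S$; let $\omega$ denote its associated $(1,1)$-form. By that theorem, $\omega$ is a smooth semipositive $(1,1)$-form on $S$ equal to $2\pi$ times the first Chern form $C_1(L_{\mathbb{V}},h_H)$ of the Griffiths line bundle with its Hodge metric, it is non-degenerate precisely on the immersive locus of the period map, and its holomorphic sectional curvature is bounded above by $-\gamma$ with $\tfrac{1}{\gamma} \leqslant \tfrac{w^2}{4}\rk\cL$. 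Since the period map is non-constant and $\dim S = 1$, the Higgs field $\phi\colon \cT_S \to \mathcal{E}nd(\cE)$ is injective away from a discrete subset $\Sigma_h \subset S$, so the degeneracy locus of $\omega$ is discrete. Moreover, since $S$ is a Riemann surface, the Gaussian curvature of $\omega$ equals the holomorphic sectional curvature of the pseudo-metric (the holomorphic sectional curvature of a pseudo-metric on a Riemann surface is its Gaussian curvature), so it is $\leqslant -\gamma_0$ on $B \setminus (\Sigma \cup \Sigma_h)$, where $\gamma_0 := \tfrac{4}{w^2\rk\cL} \leqslant \gamma$.

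\textbf{Step 2.} Then I would apply Theorem~\ref{thm:SMTlocal} to the pseudo-metric on $B \setminus \Sigma$ whose associated $(1,1)$-form is $\omega$, with the reduced divisor $\Sigma$ and the curvature constant $\gamma_0$. This immediately gives that $\omega$ is locally integrable on $B$ --- hence that $C_1(L_{\mathbb{V}},h_H) = \tfrac{1}{2\pi}\omega$, which is smooth on $B - \Sigma$, defines a current $[L_{\mathbb{V}}] := \tfrac{1}{2\pi}[\omega]$ on $B$ --- together with the estimate
\[ T_{[\omega]}(r) \leqslant_{\rm exc} \frac{2\pi}{\gamma_0}\big(N_\Sigma(r) - \mathfrak{X}_{\sigma}(r)\big) + O\big(\log r + \log T_{[\omega]}(r)\big). \]
Dividing through by $2\pi$, using $T_{[\omega]} = 2\pi\,T_{[L_{\mathbb{V}}]}$ (so $\log T_{[\omega]}(r) = \log T_{[L_{\mathbb{V}}]}(r) + O(1)$) and $\tfrac{1}{\gamma_0} = \tfrac{w^2\rk\cL}{4}$, one recovers
\[ T_{[L_{\mathbb{V}}]}(r) \leqslant_{\rm exc} \frac{w^2\rk\cL}{4}\big(-\mathfrak{X}_{\sigma}(r) + N_\Sigma(r)\big) + O\big(\log r + \log T_{[L_{\mathbb{V}}]}(r)\big), \]
which is exactly the asserted inequality.

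\textbf{Expected obstacle.} The genuine analytic content sits entirely in Theorems~\ref{thm:holomorphic sectional curvature for VHS} and~\ref{thm:SMTlocal}; the argument above is really a matter of correctly interfacing them, and the only delicate points are bookkeeping ones. One must (i) track the normalization constant $2\pi$ relating the Griffiths--Schmid Kähler form to $C_1(L_{\mathbb{V}},h_H)$ --- it is precisely this $2\pi$ that cancels the one in the constant $\tfrac{2\pi}{\gamma}$ of Theorem~\ref{thm:SMTlocal}, producing the clean coefficient $\tfrac{w^2\rk\cL}{4}$; (ii) verify that the degeneracy locus of the Griffiths--Schmid pseudo-metric is a \emph{discrete} subset of $B \setminus \Sigma$, which is exactly where the hypothesis that the period map is non-constant enters, and that on a curve the holomorphic sectional curvature bound of Theorem~\ref{thm:holomorphic sectional curvature for VHS} is literally a Gaussian curvature bound; and (iii) invoke Theorem~\ref{thm:SMTlocal} with the universal constant $\gamma_0 = \tfrac{4}{w^2\rk\cL} \leqslant \gamma$ rather than with $\gamma$ --- harmless, since Gaussian curvature $\leqslant -\gamma$ implies Gaussian curvature $\leqslant -\gamma_0$ --- which sidesteps any discussion of the sign of $N_\Sigma(r) - \mathfrak{X}_{\sigma}(r)$, a quantity that need not be non-negative.
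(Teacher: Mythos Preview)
Your proposal is correct and follows essentially the same approach as the paper, which simply states that the theorem is obtained ``by putting together Theorem~\ref{thm:SMT for pseudo-metrics with negative holomorphic sectional curvature} and Theorem~\ref{thm:holomorphic sectional curvature for VHS}''. Your choice to invoke Theorem~\ref{thm:SMTlocal} directly (rather than its global avatar Theorem~\ref{thm:SMTglobal}, which is what the paper cites) is in fact the cleaner move here since the variation lives on $B-\Sigma$ with no ambient variety, and your careful handling of the $2\pi$ normalization and of the constant $\gamma_0$ versus $\gamma$ fills in exactly the bookkeeping the paper leaves implicit.
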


Note that in this statement we make no assumption on the monodromy of $\cL$.\\

In view of Theorem \ref{thm:Arakelov-Nevanlinna inequality}, the proof of Theorem \ref{thm:SMT for VHS} is now a consequence of the following result.

\begin{proposition}
Let $X$ be a smooth projective complex algebraic variety and $\mathbb{V} = (\cL, \cF^{\sbt}, h)$ be a variation of complex polarized Hodge structures defined on the complementary of a normal crossing divisor $D \subset X$. Assume that $\cL$ has unipotent monodromies around the irreducible components of $D$. We denote by $\bar \cF^p$ the canonical Deligne-Schmid extension of $\cF^p$ to $X$ for any integer $p$ and by $\bar L_{\mathbb{V}} = \otimes_p \det \bar \cF^p$ the canonical extension of the Griffiths line bundle of $\mathbb{V}$.
Let $A$ be an ample line bundle on \( X\), $B$ be a non-compact parabolic Riemann surface equipped with a parabolic exhaustion function $\sigma$ and \(f:B\to X\) be a non-constant holomorphic map  such that \(f(B)\not\subset D\). Then the first Chern form of the line bundle $ L_{\mathbb{V}}$ equipped with the hermitian metric induced by $h$ extends as a current \( [L_{\mathbb{V}}  ] \) on $B$, and for any choice of a smooth hermitian metric on $\bar L_{\mathbb{V}} $ used to compute its Nevanlinna characteristic function, one has
  \[ T_{f, \bar L_{\mathbb{V}} }(r)  \leqslant T_{[L_{\mathbb{V}} ]}(r) +O(\log( T_{f,A}(r))).\]

\end{proposition}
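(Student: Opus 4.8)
The plan is to compare the current $[L_{\mathbb{V}}]$ on $B$ coming from the pulled-back Hodge metric with the Nevanlinna characteristic function of the canonical extension $\bar L_{\mathbb{V}}$, which is defined using a smooth metric on $X$. First I would recall Schmid's norm estimates for the Hodge metric near the boundary divisor $D$: since $\cL$ has unipotent monodromies, the Deligne--Schmid extension $\bar\cF^p$ is by definition the canonical extension, and on a local chart where $D=\{z_1\cdots z_k=0\}$ the Hodge metric $h_H$ on $\det\cF^p$, compared to a smooth metric on $\det\bar\cF^p$, is of the form $h_H = e^{-\psi_p}\cdot(\text{smooth})$ where $\psi_p$ has at most logarithmic growth, i.e.\ $e^{-\psi_p}\leqslant C\cdot\bigl(\log\|s_D\|_D^{-2}\bigr)^{N}$ for some integer $N$ depending on $w$ and $\rk\cL$ (this is the standard ``mild'' / ``log-log'' growth of the Hodge norm). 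Summing over $p$, the singular metric $h_L$ induced by $h$ on $\bar L_{\mathbb{V}}$ satisfies the same kind of bound $e^{-\varphi}\leqslant C\log(\|s_D\|_D^{-2\beta})$ relative to any fixed smooth hermitian metric $h_1$ on $\bar L_{\mathbb{V}}$. The positivity part — that $C_1(L_{\mathbb{V}},h_H)$ is a positive $(1,1)$-form on $X\setminus D$ and hence that the pulled-back form on $B\setminus(f^{-1}(D))_{\rm red}$ extends to a closed positive current $[L_{\mathbb{V}}]$ on $B$ — follows from Theorem \ref{thm:holomorphic sectional curvature for VHS} (the Griffiths--Schmid form is positive semidefinite) together with Lemma \ref{lem:HSC} and Lemma \ref{lem:RicciCurrents}, exactly as in the proof of Theorem \ref{thm:SMTlocal}; the log-growth of the Hodge norm guarantees the needed $\log$-integrability so the current is well-defined.

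Next I would feed this directly into Lemma \ref{lem:singular metric with log growth}. Apply that lemma with $L=\bar L_{\mathbb{V}}$, $h_1$ the fixed smooth metric chosen to compute $T_{f,\bar L_{\mathbb{V}}}$, $h_2 = h_L = e^{-\varphi}h_1$ the singular (Hodge-induced) metric, and $D$ the boundary divisor with its section $s_D$. The hypothesis $e^{-\varphi}\leqslant C\log(\|s_D\|_D^{-2\beta})$ is precisely the Schmid estimate from the previous step. The conclusion of Lemma \ref{lem:singular metric with log growth} is exactly
\[
T_{f,C_1(\bar L_{\mathbb{V}},h_1)}(r)\leqslant T_{f,C_1(\bar L_{\mathbb{V}},h_2)}(r)+O(\log T_{f,A}(r)).
\]
Now $T_{f,C_1(\bar L_{\mathbb{V}},h_1)}(r)=T_{f,\bar L_{\mathbb{V}}}(r)$ by definition, and $f^*C_1(\bar L_{\mathbb{V}},h_2)$ is, on $B\setminus (f^{-1}(D))_{\rm red}$, precisely the $(1,1)$-form $C_1(L_{\mathbb{V}},h_H)$ pulled back by $f$, whose current extension to $B$ is $[L_{\mathbb{V}}]$; one must check that $T_{f,C_1(\bar L_{\mathbb{V}},h_2)}(r)=T_{[L_{\mathbb{V}}]}(r)+O(1)$, which amounts to verifying that the curvature current of $h_2$ on $X$ pulls back compatibly to the current defined intrinsically on $B$ — a routine identification using that $\varphi\circ f$ is locally the difference of subharmonic functions and Jensen's formula kills bounded discrepancies. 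Combining these identifications yields the asserted inequality.

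The main obstacle — really the only non-formal input — is the boundary norm estimate for the Hodge metric, i.e.\ showing that $h_H$ on $\det\bar\cF^p$ has at most log-growth singularities with respect to a smooth extension, equivalently that $\deg$-type quantities are controlled. In the unipotent case this is a well-known consequence of Schmid's $SL_2$-orbit theorem and the nilpotent orbit theorem (the Hodge norm of a flat multivalued section grows like a power of $\log$), and one can also invoke Kashiwara's or Cattani--Kaplan--Schmid's refinements; citing \cite{Schmid} (and the discussion in \cite{Bruni-level}) suffices. I would state this as a preliminary lemma — ``relative to any smooth metric $h_1$ on $\bar L_{\mathbb{V}}$ one has $h_H = e^{-\varphi}h_1$ with $e^{-\varphi}\leqslant_{\rm loc} C\log(\|s_D\|_D^{-2\beta})$'' — and then the rest of the proof is the two-line application of Lemma \ref{lem:singular metric with log growth} sketched above. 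One should also remark that the exponent $\beta$ plays no role here since Lemma \ref{lem:singular metric with log growth} absorbs it into the $O(\log T_{f,A})$ error term, so no sharp constant needs tracking at this stage.
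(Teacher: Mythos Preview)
Your approach is correct and coincides with the paper's: the proof there is simply the observation that unipotent monodromy implies (via Schmid's norm estimates) that the Hodge metric on $\bar L_{\mathbb{V}}$ satisfies the log-growth hypothesis of Lemma~\ref{lem:singular metric with log growth}, and then that lemma yields the inequality directly. Your write-up fills in more of the surrounding verifications (well-definedness of the current $[L_{\mathbb{V}}]$, identification of $T_{f,C_1(\bar L_{\mathbb{V}},h_2)}$ with $T_{[L_{\mathbb{V}}]}$) than the paper's two-sentence argument does, but the strategy is identical.
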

\begin{proof}
This follows immediately from Lemma \ref{lem:singular metric with log growth}, since the assumption that $\cL$ has unipotent monodromies around the irreducible components of $D$ ensures that the (singular) Hodge metric on $\bar L_{\mathbb{V}} $ satisfies the growth assumption of the lemma. 
\end{proof}

%%%%%%%%%%%%%%%%%%%%%%%%%%%%%%%%%%%%%%%%%%%%%%%%%%%%%%%%
\section{Borel hyperbolicity}\label{sec:proof applications}
In this section we prove Theorem \ref{thm:criterion for Borel hyperbolicity} and Theorem \ref{thm:generalization of Nadel}. We keep the notations of the statements.
\begin{proof}[Proof of Theorem \ref{thm:criterion for Borel hyperbolicity}]
Thanks to the main result of \cite{Ariyan-Robert}, it is sufficient to prove that for any smooth complex algebraic curve $C$, any non-constant holomorphic map $f : C^{an} \rightarrow { X}^{an}$ such that $f (C^{an}) \not \subset (D \cup \B_{+})^{an}$ is algebraic. 

Since $f (C^{an}) \not \subset (\B_{+})^{an}$, there exists an ample $\bQ$-divisor $A$ on $X$ and a section of some power of $ \bar L_{\mathbb{V}} (- (  \frac{w^2}{4} \cdot \rk \cL)  \cdot D -A)$ that does not vanish on $f (C^{an})$. With the help of the \textit{First Main Theorem}, it follows that for every $r \geq 1$:
 \[   T_{f, A}(r) \leqslant  T_{f,  \bar L_{\mathbb{V}} (- (  \frac{w^2}{4} \cdot \rk \cL)  \cdot D)}(r) \leqslant  T_{f,  \bar L_{\mathbb{V}}}(r) -  \frac{w^2 \cdot \rk \cL}{4} \cdot N^{[1]}_{f, D}(r) + O(1) .    \]
On the other hand, by applying Theorem \ref{thm:SMT for VHS} to $C^{an}$ equipped with the parabolic exhaustion function $\sigma$ constructed from a proper algebraic map $C \rightarrow \mathbb{A}^1$, we obtain that 
 \[  T_{f,  \bar L_{\mathbb{V}}}(r) -  \frac{w^2 \cdot \rk \cL}{4} \cdot N^{[1]}_{f, D}(r) \leqslant_{{\rm exc}} \frac{w^2 \cdot \rk \cL}{4} \cdot \left(-  \mathfrak{X}_{B,\sigma}(r) \right) + O \left( \log r + \log T_{f, A}(r) \right) . \ \  \]
 Therefore, we get from the two preceding inequalities that
 \[  T_{f, A}(r) \leqslant_{{\rm exc}} \frac{w^2 \cdot \rk \cL}{4} \cdot \left(-  \mathfrak{X}_{B,\sigma}(r) \right) + O \left( \log r + \log T_{f, A}(r) \right).    \]
Since \(-\mathfrak{X}_{\sigma}(r)=O(\log r)\), it follows from this inequality that there exists an ample line bundle $A^\prime$ on $X$ such that the inequality $ T_{f, A^\prime}(r) \leqslant \log r $ holds for all $r \in \bR_{\geq 1}$ outside a Borel subset of finite Lebesgue measure. Since $ T_{f, A^\prime}(r)$ is a convex increasing function in $\log r$,  the function $r \mapsto T_{f, A^\prime}(r) / \log  r$ is monotone increasing, so that the inequality $ T_{f, A^\prime}(r) \leqslant \log r $ holds in fact for all $r \in \bR_{\geq 1}$ sufficiently big. We conclude by the criterion \cite[Proposition 5.9]{Griffiths-King} that $f$ is algebraic.
\end{proof}
\begin{proof}[Proof of Theorem \ref{thm:generalization of Nadel}]

First note the two following immediate results.
\begin{lemma}
Let $X$ be a complex algebraic variety and $\bar X, \bar X^\prime$ two compactifications of $X$. Then $\bar X$ is Borel hyperbolic modulo $\bar X - X$ if and only if $\bar X^\prime$ is Borel hyperbolic modulo $\bar X^\prime - X$.
\end{lemma}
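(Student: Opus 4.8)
The statement to prove is that Borel hyperbolicity modulo the boundary is independent of the chosen compactification. The plan is to reduce to the definition: given two compactifications $\bar X$ and $\bar X^\prime$ of the fixed variety $X$, I must show that if every algebraic map $f : S \to \bar X$ with $f(S^{an}) \not\subset (\bar X - X)^{an}$ is algebraic, then the same holds with $\bar X$ replaced by $\bar X^\prime$, and conversely. Since the notion is symmetric in $\bar X$ and $\bar X^\prime$, it suffices to prove one implication. The key tool is that the two compactifications are \emph{dominated by a common one}: there exists a third compactification $\bar X^{\prime\prime}$ of $X$ together with proper birational morphisms $\pi : \bar X^{\prime\prime} \to \bar X$ and $\pi^\prime : \bar X^{\prime\prime} \to \bar X^\prime$ that restrict to the identity on $X$ (take for instance $\bar X^{\prime\prime}$ to be the closure of the diagonally embedded $X$ in $\bar X \times \bar X^\prime$, then normalize or resolve if one wants smoothness, though that is not needed here).

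\textbf{Main argument.} First I would reduce to showing: if $\bar X$ is Borel hyperbolic modulo $\bar X - X$ and $\bar X^{\prime\prime}$ dominates $\bar X$ via a proper morphism $\pi$ that is an isomorphism over $X$, then $\bar X^{\prime\prime}$ is Borel hyperbolic modulo $\bar X^{\prime\prime} - X$; applying this twice (with $\bar X$ and then with $\bar X^\prime$) gives the equivalence. So let $S$ be a reduced complex algebraic variety and $f : S^{an} \to (\bar X^{\prime\prime})^{an}$ a holomorphic map with $f(S^{an}) \not\subset (\bar X^{\prime\prime} - X)^{an}$. The key observation is that $f(S^{an}) \not\subset (\bar X^{\prime\prime} - X)^{an}$ is equivalent to $(\pi \circ f)(S^{an}) \not\subset (\bar X - X)^{an}$, because $\pi$ restricts to the identity on $X$ and maps the boundary of $\bar X^{\prime\prime}$ into the boundary of $\bar X$. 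The composite $\pi \circ f : S^{an} \to \bar X^{an}$ is holomorphic and avoids the boundary of $\bar X$, so by hypothesis it is algebraic; in particular the preimage $T := (\pi \circ f)^{-1}(X)$ is a (Zariski-open, non-empty) algebraic subvariety of $S$, and $\pi \circ f|_{T^{an}}$ is an algebraic map $T \to X$. Since $\pi$ is an isomorphism over $X$, on $T^{an}$ we have $f|_{T^{an}} = \pi^{-1} \circ (\pi \circ f)|_{T^{an}}$, which is therefore algebraic as a map $T \to X \hookrightarrow \bar X^{\prime\prime}$. Finally, $f|_{T^{an}}$ and $f$ agree on the dense Zariski-open $T^{an} \subset S^{an}$, and by properness of $\bar X^{\prime\prime}$ the graph of $f$ is the closure of the graph of $f|_{T^{an}}$ (using that $S$ is reduced, so the two maps coincide on the nose on $T^{an}$ and the closed graph condition propagates); hence $f$ itself is algebraic.

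\textbf{Expected obstacle.} The only genuinely delicate point is the very last step: upgrading algebraicity of $f$ on the dense open $T^{an}$ to algebraicity of $f$ on all of $S^{an}$. Here one uses that $S$ is a reduced variety and $\bar X^{\prime\prime}$ is proper: the Zariski closure $\Gamma$ in $S \times \bar X^{\prime\prime}$ of the (algebraic) graph of $f|_T$ is an algebraic subvariety; the first projection $\Gamma \to S$ is proper and restricts to an isomorphism over the dense open $T$, hence by properness is surjective with connected fibres, and since both $\Gamma$ and $S$ are reduced with $\Gamma \to S$ birational and proper, it is in fact an isomorphism onto $S$ provided $S$ is normal — in general one invokes that the \emph{analytic} graph of $f$ is the analytic closure of the graph of $f|_T$ and coincides with $\Gamma^{an}$, so that $f$ is the composition of $(\Gamma^{an} \xrightarrow{\sim} S^{an})^{-1}$ with an algebraic projection, forcing $f$ to be algebraic. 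This is exactly the standard ``a meromorphic map from a variety to a proper variety that is algebraic on a dense open is algebraic'' principle, and invoking it (or GAGA-type closedness of algebraic graphs) disposes of the issue cleanly; I would simply cite it rather than reprove it.
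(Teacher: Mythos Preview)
The paper itself offers no proof of this lemma --- it is simply listed among ``two following immediate results'' --- so there is nothing to compare your argument against. That said, your write-up contains a genuine logical gap that you should fix.

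You reduce to the claim: \emph{if $\bar X^{\prime\prime}$ dominates $\bar X$ via a proper morphism that is an isomorphism over $X$, then $\bar X$ Borel hyperbolic modulo boundary $\Rightarrow$ $\bar X^{\prime\prime}$ Borel hyperbolic modulo boundary}. Your proof of this implication is correct. But then you write ``applying this twice (with $\bar X$ and then with $\bar X^\prime$) gives the equivalence,'' and that is not right: applying your claim with $\bar X$ gives $\bar X\text{ BH}\Rightarrow \bar X^{\prime\prime}\text{ BH}$, and applying it with $\bar X^\prime$ gives $\bar X^\prime\text{ BH}\Rightarrow \bar X^{\prime\prime}\text{ BH}$. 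These two implications both point \emph{into} $\bar X^{\prime\prime}$; neither lets you come back down. To conclude $\bar X\text{ BH}\Rightarrow \bar X^\prime\text{ BH}$ you also need the \emph{downward} implication $\bar X^{\prime\prime}\text{ BH}\Rightarrow \bar X^\prime\text{ BH}$, which you have not addressed.

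The downward direction is not hard but does require a separate argument. After reducing (via \cite{Ariyan-Robert}) to a smooth curve $C$, take $f\colon C^{an}\to (\bar X^\prime)^{an}$ holomorphic with image not in the boundary, and lift it along the proper morphism $\pi^\prime\colon \bar X^{\prime\prime}\to \bar X^\prime$: on the dense open $f^{-1}(X^{an})$ the lift is $(\pi^\prime)^{-1}\circ f$, and across each puncture it extends because the analytic fibre product $C^{an}\times_{(\bar X^\prime)^{an}}(\bar X^{\prime\prime})^{an}$ has a unique irreducible component dominating $C^{an}$, whose normalization is biholomorphic to $C^{an}$ (a proper degree-one map of smooth Riemann surfaces). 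This gives a holomorphic $\tilde f\colon C^{an}\to(\bar X^{\prime\prime})^{an}$ with image not in the boundary; apply Borel hyperbolicity of $\bar X^{\prime\prime}$ to get $\tilde f$ algebraic, whence $f=\pi^\prime\circ\tilde f$ is algebraic. With this added, your common-roof strategy goes through.
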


\begin{lemma}
Let $\bar X$ be a complex algebraic variety and $\bar Y, \bar Z \subset \bar X$ two closed subvarieties. Then $\bar X$ is Borel hyperbolic modulo $\bar Z$ if and only if $\bar X$ is Borel hyperbolic modulo $\bar Z \cup \bar Y$ and $\bar Y$ is Borel hyperbolic modulo $\bar Y \cap \bar Z$.
\end{lemma}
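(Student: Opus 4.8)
The plan is to argue both implications straight from the definition of Borel hyperbolicity modulo a closed subvariety, after isolating two elementary facts that I would record first. The first is \emph{monotonicity}: for closed subvarieties $\bar W \subseteq \bar W'$ of one and the same variety, being Borel hyperbolic modulo $\bar W$ implies being Borel hyperbolic modulo $\bar W'$, because for any test map $f$ the hypothesis $f(S^{an}) \not\subseteq (\bar W')^{an}$ implies a fortiori $f(S^{an}) \not\subseteq \bar W^{an}$. The second is \emph{descent of algebraicity through a closed immersion}: if $\iota : \bar Y \hookrightarrow \bar X$ denotes the inclusion and $g : S^{an} \to \bar Y^{an}$ is holomorphic with $\iota^{an} \circ g$ algebraic, then $g$ itself is algebraic; indeed the algebraic morphism $\psi : S \to \bar X$ with $\psi^{an} = \iota^{an} \circ g$ maps every closed point of $S$ into $\bar Y$, so --- $S$ being reduced and its closed points dense --- it factors uniquely through $\iota$, and the resulting morphism $S \to \bar Y$ analytifies to $g$.

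For the implication ($\Rightarrow$), assuming $\bar X$ is Borel hyperbolic modulo $\bar Z$, the assertion that $\bar X$ is Borel hyperbolic modulo $\bar Z \cup \bar Y$ is exactly monotonicity. For the assertion about $\bar Y$, I would take a holomorphic $g : S^{an} \to \bar Y^{an}$ with $g(S^{an}) \not\subseteq (\bar Y \cap \bar Z)^{an}$; since $g(S^{an}) \subseteq \bar Y^{an}$ and $(\bar Y \cap \bar Z)^{an} = \bar Y^{an} \cap \bar Z^{an}$, this forces $g(S^{an}) \not\subseteq \bar Z^{an}$, so $\iota^{an} \circ g$ has image not contained in $\bar Z^{an}$ and is algebraic by hypothesis; then $g$ is algebraic by the second fact.

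For the implication ($\Leftarrow$), assuming $\bar X$ is Borel hyperbolic modulo $\bar Y \cup \bar Z$ and $\bar Y$ is Borel hyperbolic modulo $\bar Y \cap \bar Z$, I would take a holomorphic $f : S^{an} \to \bar X^{an}$ with $f(S^{an}) \not\subseteq \bar Z^{an}$ --- we may assume $S$ irreducible (in the applications $S$ is even a smooth connected curve), so that $S^{an}$, hence the closure $T := \overline{f(S^{an})}$ of the image in $\bar X^{an}$, is irreducible --- and split into two cases. If $T \subseteq \bar Y^{an}$, then $f$ factors through a holomorphic $g : S^{an} \to \bar Y^{an}$ with $g(S^{an}) = f(S^{an}) \not\subseteq \bar Z^{an}$, hence not contained in $(\bar Y \cap \bar Z)^{an}$, so $g$ --- and therefore $f = \iota^{an} \circ g$ --- is algebraic by the second hypothesis. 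If $T \not\subseteq \bar Y^{an}$, I claim $T \not\subseteq (\bar Y \cup \bar Z)^{an}$: otherwise $T$ would be covered by its two closed subsets $T \cap \bar Y^{an}$ and $T \cap \bar Z^{an}$, whence by irreducibility $T \subseteq \bar Y^{an}$ (excluded) or $T \subseteq \bar Z^{an}$ (which would give $f(S^{an}) \subseteq \bar Z^{an}$, also excluded); so $f(S^{an}) \subseteq T \not\subseteq (\bar Y \cup \bar Z)^{an}$ and $f$ is algebraic by the first hypothesis.

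The two facts of the first paragraph and the bookkeeping with images are routine; the one step where I expect genuine care to be needed is the dichotomy in ($\Leftarrow$), namely ruling out the ``mixed'' configuration in which $f$ sends one part of $S$ into $\bar Y^{an}\setminus\bar Z^{an}$ and another part into $\bar Z^{an}\setminus\bar Y^{an}$. It is precisely the irreducibility of $S$, transported to $T$, that prevents this, and this is the place where that irreducibility is actually used.
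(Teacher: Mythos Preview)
Your proof is correct and far more detailed than the paper's own treatment, which simply labels this lemma an ``immediate result'' and omits any argument. The monotonicity observation, the descent of algebraicity through a closed immersion, and the irreducibility dichotomy in the $(\Leftarrow)$ direction are exactly what is needed.

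The one point worth sharpening is your reduction to irreducible $S$ in the $(\Leftarrow)$ direction. This is not free from the definition, which quantifies over \emph{all} reduced test varieties: if $S$ were reducible with one component mapping non-algebraically into $\bar Z^{an}$ and another mapping outside $\bar Z^{an}$, then $f(S^{an}) \not\subset \bar Z^{an}$ yet neither hypothesis constrains the bad component. The reduction is nonetheless legitimate because, by the main result of \cite{Ariyan-Robert} (used elsewhere in the paper, e.g.\ in the proof of Theorem~\ref{thm:criterion for Borel hyperbolicity}), Borel hyperbolicity modulo a closed subset can be tested on smooth connected algebraic curves, which are irreducible. Your parenthetical already gestures at this; citing that result explicitly would close the gap. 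Your instinct to flag the irreducibility step as the genuine crux was exactly right.
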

 By applying the second lemma with $\bar X$ a compactification of $X$, $\bar Z := \bar X - X$ and $\bar Y $ a closed subvariety  that contains the singular locus of $\bar X$ and such that $\bar X \setminus \bar Y$ is affine, one sees that it is sufficient to consider the case where $X$ is smooth and affine. Fix a smooth projective compactification $\bar X$ of $X$ such that $D := \bar X - X$ is a normal crossing divisor. For every prime number $p$, let $\bar X(p)$ denote the normalization of $\bar X$ in the fraction field of $X(p)$ and $D(p) := \bar X(p) - X(p)$. Thanks to \cite[Proposition 2.4]{Bruni-level}, the augmented base locus of the canonical extension of the Griffiths parabolic line bundle $ \bar L_{\mathbb{V}}$ is contained in $D$. A fortiori, if one takes a sufficiently small $\epsilon >0$, the augmented base locus of $ \bar L_{\mathbb{V}}(- \epsilon \cdot D)$ is also contained in $D$. On the other hand, by \cite[Theorem 5.1]{Bruni-level}, the map $\pi_p : \bar X(p) \rightarrow \bar X$ ramifies over every irreducible component of $D$ with an order divisible by $p$ for almost all prime numbers $p$. Therefore, for almost all prime numbers $p$, the augmented base locus of the line bundle $(\pi_p^\ast \bar L_{\mathbb{V}})(- \epsilon \cdot p \cdot D(p)) = \bar L_{\pi_p ^\ast \mathbb{V}}(- \epsilon \cdot p \cdot D(p))$ is contained in $D(p)$, and we conclude using Theorem  \ref{thm:criterion for Borel hyperbolicity}.

\end{proof}

\section{Second Main Theorem for hyperbolically embedded complements}\label{sec:SMTHyp}
In this section, we give another application of the results of Section \ref{sec:Nevanlinna} by establishing a \emph{Second Main Theorem} for hyperbolically embedded complements. In Section \ref{ssec:Kobayashi}, we prove a \emph{Second Main Theorem} for the Kobayashi metric of the complement of a reduced divisor in a parabolic Riemann surface in case this complement is hyperbolic. Then, in Section \ref{ssec:SMTHyp}, we use this to establish a \emph{Second Main Theorem} for pairs \((X,D)\) such that \(X\setminus D\) is hyperbolically embedded in \(X\).  

\subsection{Applications to the Kobayashi metric}\label{ssec:Kobayashi}  Let \(B\) be a Riemann surface and let \(\Sigma\) be a reduced divisor on \(B\). If \(U:=B\setminus \Sigma\) is hyperbolic, then the universal cover of \(U\) is given by the unit disc \(\Delta\), and the \((1,1)\)-form  \(\omega_{\Delta}=\frac{i}{2}\frac{1}{(1-|z|^2)^2}dz\wedge d\bar{z}\) associated to the Poincaré metric on \(\Delta\) descends to the \((1,1)\)-form \(\omega_U\) associated to the Kobayashi metric on \(U\). The Gaussian curvature \(K\) of the Kobayashi metric on \(U\) is   constant and verifies \(K=-4\). We therefore have
\[\omega_U=-\frac{1}{4}\Ric \omega_U,\]
and the Schwarz lemma implies that \(\omega_U\) has Poincaré singularities around every point of \(\Sigma\). Moreover, one has
\begin{equation}[\omega_U]=\frac{1}{4}[-\Ric \omega_U]= \frac{\pi}{2}[\Sigma]-\frac{1}{4}\Ric[\omega_U].\label{eq:RicciKoba}\end{equation}
The fact that the left hand side is smaller than the right hand side is a consequence of the Schwarz lemma. To see that one has in fact an equality , one can use the precise behavior of the Kobayashi metric around the points of \(\Sigma\) (see for instance Section IV in \cite{FK92}). 

In case \(B\) is compact, integrating \eqref{eq:RicciKoba}, one obtains the following (see for instance Section IV in \cite{FK92})
\begin{equation}\int_B[\omega_U]= \frac{\pi}{2}(\deg \Sigma+2g(B)-2).\label{eq:GaussBonnet}\end{equation}
Lemma \ref{lem:RicciNegligeable} allows us to give the following Nevanlinna theoretic analogue of this result.

\begin{corollary}\label{cor:SMTKoba}
Let $B$ be a non-compact parabolic Riemann surface equipped with a parabolic exhaustion function and let \(\Sigma\) be a reduced divisor on \(B\) such that \(U:=B\setminus \Sigma\) is hyperbolic. Let \(\omega_U\) be the \((1,1)\)-form associated to the Kobayashi metric on \(U\). Then, \(\omega_U\) is locally integrable and one has
\[T_{[\omega_U]}(r)\leqslant_{{\rm exc}} \frac{\pi}{2}\big(N_{\Sigma}(r) -\mathfrak{X}_{\sigma}(r)\big) +O\left(\log r+ \log T_{[\omega_U]}(r)\right).\]
In particular, for any \(\ep >0\) one has 
\[(1-\ep)T_{[\omega_U]}\leqslant_{{\rm exc}} \frac{\pi}{2}(N_{\Sigma}(r)-\mathfrak{X}_{\sigma}(r)) + O\left(\log r \right).\]
\end{corollary}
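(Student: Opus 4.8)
The approach is to take the Nevanlinna characteristic function $T_\bullet$ of the curvature identity \eqref{eq:RicciKoba} and then apply the logarithmic-derivative-type estimate of Lemma~\ref{lem:RicciNegligeable}; alternatively, since the Kobayashi metric on the hyperbolic surface $U=B\setminus\Sigma$ has constant Gaussian curvature $-4$, the first inequality is also a direct instance of Theorem~\ref{thm:SMTlocal} with $\gamma=4$. To set things up, fix a never-vanishing holomorphic vector field $\xi$ trivializing $T_B$ (such a vector field exists because $B$ is non-compact). As $U$ is hyperbolic, the Kobayashi metric $h$ on $U=B\setminus\Sigma$ is everywhere non-degenerate, so $\Sigma_h=\emptyset$, and its Gaussian curvature is the constant $-4<0$; hence, by the Schwarz lemma (equivalently, by Lemma~\ref{lem:HSC}(1)) the associated $(1,1)$-form $\omega_U$ has at most a Poincar\'e singularity at each point of $\Sigma$ --- in particular it is locally integrable on $B$ --- and by Lemma~\ref{lem:HSC}(2) the function $\log\|\xi\|_h$ is locally a difference of two subharmonic functions. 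Therefore the hypotheses of Lemma~\ref{lem:RicciNegligeable} are satisfied by the pair $(h,\Sigma)$; in particular $\Ric[\omega_U]$ is well-defined.

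Applying the linear operator $T_\bullet$ to the identity of currents \eqref{eq:RicciKoba}, that is to $[\omega_U]=\frac{\pi}{2}[\Sigma]-\frac{1}{4}\Ric[\omega_U]$, and using $N_\Sigma=T_{[\Sigma]}$, I obtain
\[
T_{[\omega_U]}(r)=\frac{\pi}{2}\,N_\Sigma(r)+\frac{1}{4}\,T_{-\Ric[\omega_U]}(r).
\]
Lemma~\ref{lem:RicciNegligeable} bounds the last term by $T_{-\Ric[\omega_U]}(r)\leqslant_{{\rm exc}}-2\pi\,\mathfrak{X}_\sigma(r)+O(\log r+\log T_{[\omega_U]}(r))$, and since $\frac{1}{4}\cdot 2\pi=\frac{\pi}{2}$ this gives the first stated inequality. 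For the second (unbounded error term) inequality I would argue exactly as in the corollary following Theorem~\ref{thm:SMTglobal}: since $\omega_U>0$ on the dense open set $U$, one has $\int_{B(t)}\omega_U>0$ for $t$ large, so $T_{[\omega_U]}$ is an unbounded increasing function of $r$; hence for every $\ep>0$ and every constant $C>0$ one has $C\log T_{[\omega_U]}(r)\leqslant_{{\rm exc}}\ep\,T_{[\omega_U]}(r)$, and transferring this error term to the left-hand side yields $(1-\ep)\,T_{[\omega_U]}(r)\leqslant_{{\rm exc}}\frac{\pi}{2}\big(N_\Sigma(r)-\mathfrak{X}_\sigma(r)\big)+O(\log r)$.

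I do not expect a genuine obstacle here, since all of the input is already in place: Lemma~\ref{lem:HSC}, Lemma~\ref{lem:RicciNegligeable} (or Theorem~\ref{thm:SMTlocal}) and the classical description of the Kobayashi/Poincar\'e metric near a puncture. The only points meriting a word of justification are the verification that the hypotheses of Lemma~\ref{lem:RicciNegligeable} hold --- namely the Poincar\'e-type upper bound on $\omega_U$ and the statement that $\log\|\xi\|_h$ is locally a difference of subharmonic functions --- and the unboundedness of $T_{[\omega_U]}$, both of which are routine.
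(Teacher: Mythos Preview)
Your proof is correct and follows essentially the same route as the paper: apply the operator $T_\bullet$ to the curvature identity \eqref{eq:RicciKoba} and then invoke Lemma~\ref{lem:RicciNegligeable}, with the second inequality following from $\log T_{[\omega_U]}(r)\leqslant_{\rm exc}\ep\,T_{[\omega_U]}(r)$. If anything, you are more careful than the paper in explicitly verifying the hypotheses of Lemma~\ref{lem:RicciNegligeable} via Lemma~\ref{lem:HSC}, and your observation that Theorem~\ref{thm:SMTlocal} with $\gamma=4$ gives the same bound is a valid shortcut.
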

\begin{proof}
For the first statement, one  integrates \eqref{eq:RicciKoba} and applies Lemma \ref{lem:RicciNegligeable} to obtain 
\begin{eqnarray*}T_{[\omega_U]}(r)&=&\frac{\pi}{2}N_{\Sigma}(r)-\frac{1}{4}\Ric[\omega_U]\leqslant_{\rm exc}\frac{\pi}{2}\big(N_{\Sigma}(r)- \mathfrak{X}_{\sigma}(r)\big)+O\left(\log r+ \log T_{[\omega_U]}(r)\right) \end{eqnarray*}
The second statement follows since for every \(\ep>0\),
\[\log T_{[\omega_U]}(r)\leqslant_{{\rm exc}}\ep T_{[\omega_U]}(r).\]
\end{proof}

\subsection{Second Main Theorem for hyperbolically embedded varieties}\label{ssec:SMTHyp}

Let \(X\) be a smooth complex projective variety. Recall that an open subset \(U\subset X\) is hyperbolically embedded in \(X\) if for every hermitian \(h\) metric on \(X\), there exists \(\eta>0\) such that \(\eta h\leqslant F_U\), where \(F_U\) denotes the Kobayashi-Royden infinitesimal pseudo-metric on \(U\). We refer to Chapter 3 of  \cite{Kob98} for the definition of the Kobayashi-Royden pseudo-metric and to Theorem 3.3.3 in \cite{Kob98} or Proposition 16 in \cite{PR07}  for the proof of the equivalent characterization we use here.

The algebraic counterpart of the \emph{Second Main Theorem} we shall prove below was established by Pacienza and Rousseau in \cite [Theorem 5]{PR07}. In order to emphasize the analogies between the analytic and the algebraic sides both in statements and proofs, we state and reprove  this result of Pacienza and Rousseau in the next theorem.

\begin{theorem}\label{thm:SMTHyp}
Let \(X\) be a smooth projective variety and let \(H\) be a reduced divisor on \(X\) such that \(X\setminus H\) is hyperbolically embedded in \(X\). Let \(A\) be  an ample line bundle on \(X\). Then
\begin{enumerate}
\item (Pacienza-Rousseau). There exists a constant \(\alpha_{\rm alg} >0\) such that for any projective curve \(B\) and any non-constant algebraic map \(f:B\to X\) such that \(f(B)\not\subset H\), one has
\[\deg f^*A\leqslant \alpha_{\rm alg}\big(\deg (f^*H)_{\rm red} - \chi(B)\big).\]
\item There exists a constant \(\alpha_{\rm an} >0\) such that for any non-compact parabolic Riemann surface $B$ equipped with a parabolic exhaustion function and any non-constant holomorphic map \(f:B\to X\) such that \(f(B)\not\subset H\), one has
\[T_{f,A}(r)\leqslant_{\rm exc} \alpha_{\rm an}\big(N^{(1)}_{f,H}(r) - \mathfrak{X}_\sigma(r)\big)+O(\log r).\]
\end{enumerate}
\end{theorem}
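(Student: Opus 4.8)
The plan is to deduce both statements from the corresponding facts about the Kobayashi metric on $U := X \setminus H$, using the hyperbolic-embedding hypothesis as the bridge between the extrinsic geometry (the ample line bundle $A$) and the intrinsic geometry (the Kobayashi pseudo-metric). I will treat the analytic statement (2) in detail and indicate how the algebraic statement (1) follows from the same mechanism via Gauss--Bonnet.

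\emph{Analytic statement.} Fix a K\"ahler form $\omega_X$ on $X$ in the class $c_1(A,h_A)$ for some smooth metric $h_A$, so that $T_{f,A}(r) = T_{f^\ast \omega_X}(r) + O(1)$. Let $F_U$ denote the Kobayashi--Royden infinitesimal pseudo-metric on $U$. Since $U$ is hyperbolically embedded in $X$, there is a constant $\eta > 0$ with $\eta \, \omega_X \leqslant \omega_U$ on $U$, where $\omega_U$ is the $(1,1)$-form associated to $F_U$ (the decreasing property of the Kobayashi metric gives that $F_U$ dominates a positive multiple of any smooth metric on the relatively compact $U$, and this is exactly the characterization cited via \cite[Theorem 3.3.3]{Kob98} or \cite[Proposition 16]{PR07}). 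Now let $f : B \to X$ be non-constant with $f(B) \not\subset H$, and set $B' := B \setminus \Sigma$ where $\Sigma := (f^{-1}(H))_{\mathrm{red}}$, which is a discrete (reduced) subset of $B$. Restricting $f$ to $B'$ gives a holomorphic map $B' \to U$, and by the distance-decreasing property of the Kobayashi metric, the pull-back pseudo-metric $f^\ast F_U$ on $B'$ is dominated by the Kobayashi--Royden pseudo-metric of $B'$ itself; in particular $f^\ast F_U$ has Gaussian curvature bounded above by a negative constant wherever it is non-degenerate (this is the content of the Ahlfors--Schwarz lemma applied to the Kobayashi metric, cf. the discussion preceding Corollary \ref{cor:SMTKoba}). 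Concretely: the $(1,1)$-form $\varpi := f^\ast \omega_U$ on $B'$ satisfies $\mathrm{Ric}\, \varpi \leqslant -\varpi$ away from its degeneracy locus, and by the Schwarz lemma $\varpi$ has at most Poincar\'e singularities along $\Sigma$. Hence Theorem \ref{thm:SMTlocal} (or equivalently Theorem \ref{thm:SMTglobal}) applies with $\gamma = 1$ and yields
\[ T_{[\varpi]}(r) \leqslant_{\mathrm{exc}} 2\pi\big( N_\Sigma(r) - \mathfrak{X}_\sigma(r) \big) + O\big(\log r + \log T_{[\varpi]}(r)\big). \]
Since $N_\Sigma(r) = N^{(1)}_{f,H}(r)$ and $\eta \, f^\ast \omega_X \leqslant \varpi$ forces $\eta\, T_{f,A}(r) \leqslant T_{[\varpi]}(r) + O(1)$, and since $\log T_{[\varpi]}(r) \leqslant_{\mathrm{exc}} \ep\, T_{[\varpi]}(r)$ for every $\ep > 0$ (as $T_{[\varpi]}$ is unbounded increasing, $\varpi$ being non-degenerate somewhere because $f$ is non-constant into the hyperbolic $U$), we may absorb the error term on the left and obtain
\[ T_{f,A}(r) \leqslant_{\mathrm{exc}} \alpha_{\mathrm{an}}\big( N^{(1)}_{f,H}(r) - \mathfrak{X}_\sigma(r) \big) + O(\log r) \]
with $\alpha_{\mathrm{an}} := \tfrac{4\pi}{\eta}$ (any constant $> \tfrac{2\pi}{\eta}$ works after absorbing). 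Note the constant depends only on $(X,H,A)$, not on $B$ or $f$.

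\emph{Algebraic statement.} This is the well-known result of Pacienza--Rousseau and follows the same template with Nevanlinna data replaced by degrees. Given a projective curve $B$ (say smooth, by normalizing) and a non-constant $f : B \to X$ with $f(B) \not\subset H$, set $\Sigma := (f^\ast H)_{\mathrm{red}}$ and $B' := B \setminus \Sigma$, which is hyperbolic provided $2g(B) - 2 + \deg\Sigma > 0$; the remaining cases ($B' = \bC$ or $\bC^\ast$) are impossible since they would give a non-constant map to the hyperbolically embedded $U$, forcing the right-hand side inequality to be an equality of the degenerate form and in fact ruling them out. On $B'$ the Kobayashi metric $\omega_{B'}$ dominates $f^\ast \omega_U \geqslant \eta\, f^\ast \omega_X$, so by monotonicity of total mass $\eta \deg f^\ast A \leqslant \int_{B'}[f^\ast \omega_U] \leqslant \int_{B'}[\omega_{B'}]$; the Gauss--Bonnet formula \eqref{eq:GaussBonnet} gives $\int_{B'}[\omega_{B'}] = \tfrac{\pi}{2}\big(\deg \Sigma + 2g(B) - 2\big)$. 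Hence $\deg f^\ast A \leqslant \alpha_{\mathrm{alg}}\big(\deg(f^\ast H)_{\mathrm{red}} - \chi(B)\big)$ with $\alpha_{\mathrm{alg}} := \tfrac{\pi}{2\eta}$ (recall $-\chi(B) = 2g(B) - 2$).

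\emph{Main obstacle.} The one delicate point is justifying that the induced form $\varpi = f^\ast \omega_U$ is genuinely usable in Theorem \ref{thm:SMTlocal}: one must know that $\omega_U$ (a priori only continuous, defined via the Kobayashi--Royden pseudo-metric) is smooth enough and has the stated Poincar\'e-type boundary behavior and curvature bound. On a Riemann surface the Kobayashi metric is the Poincar\'e metric of the universal cover, hence real-analytic with constant curvature $-4$ on $B'$, so this is fine; the boundary estimates near $\Sigma$ are the classical ones (\cite{FK92}, Section IV, as cited before Corollary \ref{cor:SMTKoba}). One also needs $\varpi$ non-degenerate at some point of $B'$, i.e. $f|_{B'}$ non-constant, which holds by hypothesis. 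With these checks in place, the argument is a direct application of the Section \ref{sec:Nevanlinna} machinery to the Kobayashi metric together with the defining inequality of hyperbolic embeddedness.
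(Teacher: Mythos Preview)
Your overall strategy is the same as the paper's, but in the analytic part there is a conflation that creates a genuine gap. You set \(\varpi:=f^*\omega_U\) where \(\omega_U\) is the Kobayashi--Royden form on the \emph{target} \(U=X\setminus H\), and then assert that \(\Ric\varpi\leqslant -\varpi\) ``by Ahlfors--Schwarz''. This is not justified: the Ahlfors--Schwarz lemma goes the other way (a curvature bound implies domination by the Poincar\'e metric, not conversely), and on a higher-dimensional \(U\) the Kobayashi--Royden pseudo-metric is in general only upper semicontinuous and Finsler, so neither the smoothness nor the Gaussian-curvature hypothesis of Theorem~\ref{thm:SMTlocal} is available for \(\varpi\). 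Your ``Main obstacle'' paragraph then silently switches to the Kobayashi metric of \(B'\), which is a different object.

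The paper avoids this by working directly with the Kobayashi metric \(\omega_{B'}\) on the \emph{source} \(B'=B\setminus\Sigma\): on a hyperbolic Riemann surface this is literally the Poincar\'e metric, smooth with constant curvature \(-4\), so Corollary~\ref{cor:SMTKoba} applies to it. The chain of inequalities is
\[
\eta\, f^*\omega_A \leqslant f^*\omega_U \leqslant \omega_{B'},
\]
the first by hyperbolic embeddedness, the second by the distance-decreasing property of the Kobayashi metric under \(f|_{B'}:B'\to U\). One then bounds \(T_{f,A}\) by \(T_{[\omega_{B'}]}\) and invokes Corollary~\ref{cor:SMTKoba} with \(\gamma=4\), giving \(\alpha_{\rm an}=\tfrac{\pi}{2\eta(1-\ep)}\). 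You in fact carry out exactly this correct argument in your algebraic paragraph; the fix in the analytic part is simply to replace \(\varpi\) by \(\omega_{B'}\) throughout and cite Corollary~\ref{cor:SMTKoba} rather than Theorem~\ref{thm:SMTlocal}.
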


\begin{proof}
Let \(B\) be either a smooth projective curve or a parabolic Riemann surface with a non-constant holomorphic map \(f:B\to X\) such that \(f(B)\not\subset H\). Let \( \Sigma :=(f^*H)_{\rm red}\) be the set theoretical inverse image of the divisor \(H\). Let us set \(U=B\setminus \Sigma\) and \(V=X\setminus H\). By restriction, \(f\) induces a non-constant holomorphic map \({f}|_U:U\to V\). Since \(V\) is hyperbolic, so is \(U\). Let us denote by \(\omega_{U}\) the \((1,1)\)-form associated to the Kobayashi metric on \(U\) and let us denote by \(\|\cdot\|_U\) the associated norm.  Let us denote by \(F_V\) the Kobayashi Royden infinitesimal pseudo-norm on \(V\). The distance decreasing property of the Kobayashi-Royden pseudo-metric implies that 
\[f^*F_{V}\leqslant \|\cdot\|_{h_{U}}.\]

Let \(A\) be an ample line bundle on \(X\). Since  \(A\) is ample, it admits a hermitian metric with positive curvature \(\omega_A\). Since \(\omega_A\) is a  positive \((1,1)\)-form, it induces a hermitian metric \(h_A\) on \(X\). Since \(V\) is hyperbolically embedded in \(X\), there exists a positive real number \(\eta>0\) such that 
\[\eta \|\cdot\|_{h_A}|_{V}\leqslant F_{V}.\]
Therefore, one has
\[\eta f^*\|\cdot\|_{h_A}|_{U}\leqslant \|\cdot\|_{{U}}.\]
At the level of forms, this yields
\begin{equation}\eta f^*\omega_A|_{U}\leqslant \omega_{U}.\label{eq:FormulePreuve}\end{equation}
If \(B\) is projective, it suffices to integrate \eqref{eq:FormulePreuve} and apply  \eqref{eq:GaussBonnet}
\[\eta \deg f^*A=\eta \int_{B}f^*\omega_A=\eta \int_{U}f^*\omega_A\leqslant  \int_{U}\omega_{U}=\int_{B}[\omega_{U}]\leqslant  \frac{\pi}{2}(\deg \Sigma+2g(B)-2).\]
Taking \(\alpha_{\rm alg}=\frac{\pi}{2 \eta}\) yields the first assertion.

If \(B\) is a  non-compact parabolic Riemann surface equipped with a parabolic exhaustion function, it suffices to apply \(\int_{1}^r\int_{B(t)}\cdot \frac{dt}{t}\) to \eqref{eq:FormulePreuve} and apply  Corollary \eqref{cor:SMTKoba} to obtain that for any \(\ep>0\)
\[\eta T_{f,A}(r) \leqslant T_{[\omega_{U}]}(r)\leqslant_{\rm exc} \frac{\pi}{2(1-\ep)} \big(N^{(1)}_{f,H}(r)- \mathfrak{X}_{\sigma}(r)\big) + O(\log r).\]
It suffices to take \(\alpha_{\rm an}:=\frac{\pi}{2\eta(1-\ep)}\) to conclude the proof of the second assertion.
\end{proof}

%%%%%%%%%%%%%%%%%%%%%%%%%%%%%%%%%%%%%%%%%%%%%%%%%%%%%
\bibliography{biblio} 
 \bibliographystyle{alpha}

 \end{document}